\documentclass[11pt,oneside]{amsart}
\title{On splitting and splittable families}
\author{Samuel Coskey}
\address{Boise State University, Boise, ID, USA}
\email{scosky@gmail.com}
\author{Bryce Frederickson}
\address{Emory University, Atlanta, GA, USA}
\email{bfrede4@emory.edu}
\author{Samuel Mathers}
\address{Chicago, IL, USA}
\email{smathers304@gmail.com}
\author{Hao-Tong Yan}
\address{New York, NY, USA}
\email{haotongyan23@gmail.com}
\usepackage{geometry}
\usepackage{amsmath,amssymb,amsthm}
\usepackage{mathpazo,bm}
\usepackage{setspace}\onehalfspacing\raggedbottom
\usepackage{float}
\usepackage{tikz}
\usepackage{mathrsfs}
\usepackage[hidelinks]{hyperref}
\usepackage{todonotes}

\newcommand{\floor}[1]{\ensuremath{\left\lfloor #1 \right\rfloor}}
\newcommand{\ceiling}[1]{\ensuremath{\left\lceil #1 \right\rceil}}

\DeclareMathOperator{\splitters}{split}

\newtheorem{theorem}{Theorem}[section]
\newtheorem{lemma}[theorem]{Lemma}
\newtheorem{proposition}[theorem]{Proposition}

\theoremstyle{definition}
\newtheorem{definition}[theorem]{Definition}
\newtheorem*{conjecture}{Conjecture}

\usepackage{etoolbox}
\makeatletter\pretocmd{\@seccntformat}{\S}{}{}
  \pretocmd{\@subseccntformat}{\S}{}{}\makeatother

\begin{document}
\maketitle

\begin{abstract}
  A set $A$ is said to split a finite set $B$ if exactly half the elements of $B$ (up to rounding) are contained in $A$. We study the dual notions: (1) splitting family, which is a collection of sets such that any subset of $\{1,\ldots,k\}$ is split by a set in the family, and (2) splittable family, which is a collection of sets such that there is a single set $A$ that splits each set in the family.
  
  We study the minimum size of a splitting family on $\{1,\ldots,k\}$, as well as the structure of splitting families of minimum size. We use a mixture of computational and theoretical techniques. We additionally study the related notions of $\mathord{\leq}4$-splitting families and $4$-splitting families, and we provide lower bounds on the minimum size of such families.
  
  Next we investigate splittable families that are just on the edge of unsplittability in several senses. First, we study splittable families that have the fewest number of splitters. We give a complete characterization in the case of two sets, and computational results in the case of three sets. Second, we define a splitting game, and study splittable families for which a splitter cannot be found under adversarial conditions.
\end{abstract}

\section{Introduction}

This article concerns the dual notions of splitting families and splittable families, which arise naturally in several areas of combinatorics. Both types of families involve the following key notion:

\begin{definition}
  Let $A,B$ be subsets of $[k]=\{1,\ldots,k\}$. We say that $A$ \emph{splits} $B$, or $A$ is a \emph{splitter} of $B$, if $|A \cap B| = |B|/2$ for $|B|$ even, and $|A \cap B| = (|B|\pm1)/2$ for $|B|$ odd.
\end{definition}


A collection $\mathcal A$ of subsets of $[k]$ is said to be a \emph{splitting family} on $k$ if for any $B\subset[k]$ there exists an $A\in\mathcal A$ such that $A$ splits $B$. Splitting families are also called splitting systems, and along with separating systems such families have been used to aid in combinatorial search algorithms \cite{stinson-baby}, see also \cite{stinson-mt,li-hao}. In sections 2--3 below, we will study several questions surrounding splitting families and variations of splitting.

A collection $\mathcal B$ of subsets of $[k]$ is said to be a \emph{splittable family} if there exists $A\subset[k]$ such that for each $B\in\mathcal B$ we have $A$ splits $B$ (we sometimes abbreviate this: $A$ splits $\mathcal B$). Splittable families correspond to the families with combinatorial discrepancy $\leq1$, the optimal value. We refer to \cite{chazelle} as a starting point for discrepancy theory and its numerous applications. In sections 4--5, we will study several questions surronding splittable families and variations of splittability.

Beginning with splitting families, our first investigation focuses on the question of the least size of a splitting family on $k$. Splitting families were used in \cite{stinson-baby} as an aid in an algorithm for solving the low Hamming weight discrete logarithm problem. In that article, the author presents a construction, attributed varyingly to Coppersmith and Galvin, of a splitting family on $k$ with $\lceil{k/2}\rceil$ sets. Specifically he shows that the family consisting of $A_i=\{i,i+1\ldots,i+\ceiling{k/2}-1\}$ for $i=1,\ldots,\ceiling{k/2}$ is a splitting family on $k$. (We will refer to this as a \emph{standard} splitting family.)

The article \cite{stinson-baby} left it unclear whether there exists a splitting family of smaller size than the standard splitting family. It is clear that a smaller splitting family would improve their algorithms, but it is not straightforward to observe whether such an object exists. In Section~2 we affirm that the standard splitting family is optimal in the sense that any splitting family on $k$ has size at least $\ceiling{k/2}$. This result follows from the results of \cite{alon} and was pointed out to us by a colleague \cite{yost-wolff}.

It is also natural to ask whether the standard splitting family is the \emph{unique} splitting family of optimal size. We provide computational results essentially confirming this conjecture for $k\leq 16$ (with just one notable exception). We further show that under additional assumptions, the conjecture holds for all $k$.

In Section~3 we study two broad generalizations of splitting families which arise naturally in the above investigations. A collection $\mathcal A$ is said to be a \emph{$\mathord{\leq}4$-splitting family} on $k$ if for any $B\subset[k]$ such that $|B|\leq 4$ there exists an $A\in\mathcal A$ such that $A$ splits $B$. The notion of $4$-splitting family is defined analogously. We will provide lower bounds on the minimum size of a $\mathord{\leq}4$-splitting family and of a $4$-splitting family. The latter bound corrects a minor error in \cite{stinson-mt}.

Our second major area of investigation concerns the boundary between splittable and unsplittable families. It was shown in \cite{charikar} (and elaborated in \cite{reu16}) that the general problem of deciding whether a given family is splittable is NP-complete. It is then natural to ask which families $\mathcal B$ are ``just splittable,'' that is, splittable but with the fewest number of distinct sets $A$ that split $\mathcal B$. The answer to the question of which $n$-set families $\mathcal B$ are ``just splittable'' can be used to find bounds on the least size of an $n$-splitting family on $[k]$, see \cite{reu14}.

In Section~4 we give results on $n$-set families on $[k]$ with the fewest number of splitters for $n\leq3$. When $n=1$ and $\mathcal B=\{B\}$, the minimum number of splitters occurs when $|B|=k$ or $k-1$, and the number of splitters is asymptotic to $2^k/\sqrt{k}$. When $n=2$ and $\mathcal B=\{B_1,B_2\}$, it was shown in \cite{reu14} that if $|B_i|\leq k/2$ then the minimum number of splitters occurs when $B_1$ and $B_2$ are disjoint. In this article we remove the cardinality constraints $|B_i|\leq k/2$ and find that the minimum number of splitters occurs when $|B_i|\approx 2k/3$ and $|B_1\cap B_2|\approx k/3$. In both the disjoint and general cases, the number of splitters is asymptotic to $2^k/k$. We also calculate an analytical formula for approximating the number of splitters of an arbitrary two-set family $\mathcal B$.

When $n=3$ the situation is somewhat more complex and we provide partial answers and computational results. Based solely on the short pattern $2^k/k^{1/2}$, $2^k/k^1$, the authors of \cite{reu14} conjectured that for $n=3$ the minimum number of splitters is asymptotic to $2^k/k^{3/2}$. Our computational results support this conjecture. However the explanation for the formula $2^k/k^{3/2}$ does not seem to be the same as the explanation for the cases $n=1,2$. Indeed, when $n\geq3$ there exist unsplittable families, unlike when $n<3$, and the ``just splittable'' families are very similar to the unsplittable ones in appearance.

In Section~5, we define and study a strategic variant of the notion of splittability. We define the \emph{splitting game}, in which players Split and Skew are given a family $\mathcal B$ and collaborate to construct a set $A$. Split wins if $A$ splits $\mathcal B$ and Skew wins otherwise. We establish several general lemmas that allow one to simplify the analysis of a given instance of the game. For families $\mathcal B$ of three or fewer sets, we give a complete characterization of when each player has a winning strategy. We also provide complete solutions to several special case studies, such as the tic-tac-toe style game which arises when the splitting game is played on a grid.

\textbf{Acknowledgement.} This article represents a portion of the research carried out during the 2018 REU CAD at Boise State University. The program was supported by NSF award \#DMS-1659872 and by Boise State University.

\section{Splitting families}

Recall from the introduction that a splitting family $\mathcal A$ on $k$ is a collection of sets for which every subset of $[k]$ is split by some set in $\mathcal A$. In this section we investigate several questions surrounding the minimum size of a splitting family on $k$, as well as the structure of families of minimum size.

Recall from the introduction that for even $k$, the \emph{standard splitting family} on $k$ consists of $A_i=\{i,i+1\ldots,i+k/2-1\}$ for $i=1,\ldots,k/2$. For any even $k$, the standard splitting family on $k$ is in fact a splitting family, see for instance \cite{enomoto} and \cite{stinson-baby}. As a consequence, for any $k$ the minimum size of a splitting family on $k$ has upper bound $\ceiling{k/2}$. The next result states that this bound is tight. We are grateful to Calvin Yost--Wolff for discovering the proof and allowing us to include it here.

\begin{theorem}[\cite{yost-wolff}]
  Any splitting family on $k$ has size at least $k/2$. As a consequence, the minimum size of a splitting family on $k$ is exactly $\ceiling{k/2}$.
\end{theorem}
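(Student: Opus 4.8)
The plan is to show that if $\mathcal{A} = \{A_1, \ldots, A_m\}$ is a splitting family on $k$, then $m \geq k/2$, by a linear-algebraic (dimension-counting) argument over a suitable field or over $\mathbb{R}$. To each set $A_j \subseteq [k]$ associate its $\pm 1$ indicator vector $v_j \in \{-1, +1\}^k$, where the $i$-th coordinate is $+1$ if $i \in A_j$ and $-1$ otherwise. The key observation is a reformulation of the splitting condition: for a two-element set $B = \{a, b\}$, the set $A_j$ splits $B$ if and only if exactly one of $a, b$ lies in $A_j$, i.e. the $a$-th and $b$-th coordinates of $v_j$ differ. So the requirement that $\mathcal{A}$ split every pair already forces that for every pair $\{a,b\}$ there is some $j$ with $(v_j)_a \neq (v_j)_b$; equivalently, the map $i \mapsto \big((v_1)_i, \ldots, (v_m)_i\big) \in \{-1,+1\}^m$ is injective, which only yields the weak bound $m \geq \log_2 k$. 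To get the linear bound $m \geq k/2$ we must use splitting of larger sets, and this is where the argument from \cite{alon} enters.

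The route I would take is the polynomial / tensor-power method. Consider the vectors $v_j$ and form all the coordinatewise products $v_{j_1} \odot v_{j_2} \odot \cdots \odot v_{j_r}$ (Hadamard products) for multisets $\{j_1, \ldots, j_r\}$ of indices; since each $v_j$ is $\pm 1$-valued, repeated indices cancel, so it suffices to range over subsets $S \subseteq [m]$, giving the vector $v_S := \bigodot_{j \in S} v_j \in \{-1,+1\}^k$ (with $v_\emptyset$ the all-ones vector). The claim to establish is that these $2^m$ vectors $\{v_S : S \subseteq [m]\}$ span $\mathbb{R}^k$; this immediately gives $2^m \geq k$, which is far too weak, so instead one restricts to small $S$: the real claim is that $\{v_S : |S| \leq 1\}$ together with the splitting hypothesis forces enough independence. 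Concretely, I would argue: if $\mathcal{A}$ is a splitting family, then for the subspace $W = \operatorname{span}\{v_1, \ldots, v_m, \mathbf{1}\} \subseteq \mathbb{R}^k$, one shows $\dim W \geq$ roughly $k/2$, because the splitting conditions on all subsets $B$ can be encoded as linear constraints that cannot all be satisfied in a low-dimensional space. The cleanest incarnation: suppose toward contradiction that $m < k/2$; then $\dim(\operatorname{span}\{\mathbf{1}, v_1, \ldots, v_m\}) \leq m+1 \leq k/2$, so its orthogonal complement $U$ has dimension $\geq k/2$. Pick a nonzero $0/1$ vector-difference structure inside $U$ — more precisely, find two distinct subsets $B, B'$ of $[k]$ of equal size whose indicator vectors $\chi_B, \chi_{B'}$ satisfy $\chi_B - \chi_{B'} \in U$; counting shows such a pair exists once $\dim U$ is large (this is a pigeonhole over the $\sim 2^k/\sqrt{k}$ balanced subsets versus the $\leq 2^{m+1}$ possible "profiles" $(\langle \chi_B, v_j\rangle)_j$). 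Then $\langle \chi_B - \chi_{B'}, v_j \rangle = 0$ for all $j$ and $\langle \chi_B - \chi_{B'}, \mathbf 1\rangle = 0$ means $|B| = |B'|$ and $|A_j \cap B| = |A_j \cap B'|$ for every $j$. Now apply the splitting family property to the symmetric difference $C = B \triangle B'$: some $A_j$ splits $C$, but since $|A_j \cap B| = |A_j \cap B'|$ one computes $|A_j \cap C| = |A_j \cap B| + |A_j \cap B'| - 2|A_j \cap B \cap B'|$ and $|C| = |B| + |B'| - 2|B \cap B'|$, and these force $|A_j \cap C| \neq |C|/2$ unless... — so the contradiction comes from choosing $B, B'$ with $B \cap B' = \emptyset$, whence $C = B \cup B'$, $|C| = 2|B|$, and $|A_j \cap C| = 2|A_j \cap B|$ is even, so $A_j$ splits $C$ iff $|A_j \cap C| = |C|/2 = |B|$ iff $|A_j \cap B| = |B|/2$, i.e. $A_j$ must split $B$ — but we want a $B$ that is split by no $A_j$, contradicting the splitting family hypothesis if we can guarantee such a $B$ exists among disjoint pairs with equal $v_j$-profiles.

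Let me restate the cleanest version of the plan. Work in $\mathbb{R}^k$; let $P \colon \mathbb{R}^k \to \mathbb{R}^k$ be orthogonal projection onto $U = \{\mathbf 1, v_1, \ldots, v_m\}^\perp$. For each $x \in \{0,1\}^k$, the vector $Px$ records all the quantities $|x| = \langle x, \mathbf 1\rangle$ and $2|A_j \cap x| - |x|$ via its position modulo $U^\perp$; if $Px = Px'$ for $x \neq x'$ indicators of sets $B \neq B'$, then $B$ and $B'$ have the same size and each $A_j$ intersects them equally. I would first reduce to the subfamily of $0/1$ vectors supported on pairwise-disjoint "blocks" so that symmetric-difference computations stay clean, then do the pigeonhole: the number of distinct values of $(|A_1 \cap B|, \ldots, |A_m \cap B|)$ as $B$ ranges over, say, all subsets of a fixed $2t$-element set built from $t$ disjoint pairs, is at most $(t+1)^m$ (each coordinate is between $0$ and ... well, bounded), whereas the number of such $B$ of size exactly $t$ is $\binom{2t}{t}/\text{stuff}$ — choosing $t \approx k/2$ and $m < k/2$ makes $(t+1)^m$ too small only if we sharpen the profile bound, so actually the efficient version uses the $\pm 1$ Hadamard-product span directly: the vectors $v_S$ for $|S| \le k/2$ cannot span, etc. The \textbf{main obstacle} will be pinning down exactly which family of test sets $B$ to use and which linear-algebraic quantity to count so that the arithmetic closes with the clean threshold $k/2$ rather than $k/2 - O(\log k)$ or $k/3$; getting the constant exactly right is the whole content, since the $\log k$-type bound is immediate. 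I expect the author's proof (due to Yost–Wolff) to package this more slickly — likely by exhibiting, for any family of size $< k/2$, an explicit unsplittable set via a direct dimension/parity argument on the span of the $v_j$ over $\mathbb{F}_2$ or via a clever pairing — and I would structure my writeup to first prove the $\mathbb{F}_2$ statement "the $\mathbf 1, v_1, \ldots, v_m$ span a subspace of dimension $> k/2$ over some field" and then extract the combinatorial contradiction. The upper bound half of the final sentence is already given in the excerpt (the standard splitting family has size $\lceil k/2 \rceil$), so combining it with the lower bound $\lceil k/2\rceil \ge m \ge k/2$, hence $m \ge \lceil k/2 \rceil$ since $m$ is an integer, completes the proof.
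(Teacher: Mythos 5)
Your writeup is an exploration of candidate strategies rather than a proof, and the decisive step is missing from each of them; you say as much yourself when you note that ``getting the constant exactly right is the whole content.'' Concretely, in the route you develop furthest (assume $m<k/2$, pass to $U=\{\mathbf 1,v_1,\ldots,v_m\}^\perp$, and pigeonhole to find $B\neq B'$ with equal profiles), two things fail. First, the counting is wrong: the profile $\bigl(\langle\chi_B,v_1\rangle,\ldots,\langle\chi_B,v_m\rangle\bigr)$ has coordinates that are integers of size up to $k$, not elements of a two-point set, so the number of profiles is of order $k^m$, which for $m\approx k/2$ vastly exceeds $2^k$; a pigeonhole of this shape can at best yield a bound like $m\gtrsim k/\log k$, never the sharp $k/2$. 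Second, even granting disjoint $B,B'$ of equal size with $|A_j\cap B|=|A_j\cap B'|$ for all $j$, your own computation shows that $A_j$ splits $C=B\cup B'$ exactly when $A_j$ splits $B$; since the splitting hypothesis guarantees some $A_j$ splits $B$, the set $C$ is split and no contradiction appears. Your closing plan, to ``first prove that $\mathbf 1,v_1,\ldots,v_m$ span a subspace of dimension $>k/2$,'' is a restatement of the desired conclusion (these are only $m+1$ vectors), not an argument for it.

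The paper's proof starts from the same $\pm1$ vectors but closes the argument with the polynomial method instead of counting. One sets $p(\bm{x})=\prod_{A\in\mathcal F}\bm{v}_A\cdot\bm{x}$; if $B$ has even size and $A$ splits $B$ then $\bm{v}_A\cdot\chi_B=2|A\cap B|-|B|=0$, so $p$ vanishes on every $0,1$-vector with an even number of ones, while for odd $|B|$ every factor $2|A\cap B|-|B|$ is odd, hence nonzero. After the affine change of variables to the $\pm1$ cube and multilinear reduction (replacing $x_i^2$ by $1$), one obtains a nonzero multilinear polynomial vanishing on the even-weight half of $\{\pm1\}^k$ but not on the odd-weight half, and \cite[Lemma~2.1]{alon} then forces its degree, hence $|\mathcal F|$, to be at least $k/2$. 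That degree lemma is precisely the ingredient that replaces your missing quantitative step and is what produces the exact constant $1/2$; without it (or an equivalent), the linear-algebraic and pigeonhole ideas you sketch do not close.
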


\begin{proof}
  Suppose that $\mathcal A$ is a splitting family on $k$. For each $A\in\mathcal A$ let $\bm{v}_A$ be the $\pm1$ characteristic vector of $A$, that is, $\bm{v}_A(i)=1$ for $i\in A$ and $\bm{v}_A(i)=-1$ for $i\notin A$. Define a polynomial $p(\bm{x})$ in $k$ variables by
  \[p(\bm{x})=\prod_{A\in\mathcal A}\bm{v}_A\cdot\bm{x}\text{.}
  \]
  Then $p$ vanishes on every $0,1$-vector with an even number of $1$'s. Next define
  \[q(\bm{x})=p((x_1+1)/2,\ldots,(x_k+1)/2)\text{.}
  \]
  and note that $q$ vanishes on every $\pm1$-vector with an even number of $1$'s.
  
  Finally let $\bar q$ be the polynomial obtained from $q$ by repeatedly replacing occurrences of $x_i^2$ with $1$ in each monomial of $q$. Then $\bar q$ agrees with $q$ on every $\pm1$-vector, and hence $\bar q$ vanishes on every $\pm1$-vector with an even number of $1$'s.
  
  Now $\bar q$ is multilinear in the sense that it is affine in each coordinate. Moreover $\bar q\neq 0$: indeed, $p$ does not vanish on any $0,1$-vector with an odd number of $1$'s, so $q$ does not vanish on any $\pm1$-vector with an odd number of $1$'s, so $\bar q$ does not vanish on any $\pm1$-vector with an odd number of $1$'s. It follows from \cite[Lemma~2.1]{alon} that $\deg(\bar q)\geq k/2$. We can now conclude that $|\mathcal A|=\deg(p)=\deg(q)\geq\deg(\bar q)\geq k/2$.
\end{proof}

Given this result, it is natural to wonder whether the standard splitting family on $k$ is the unique splitting family of size $\ceiling{k/2}$. In the rest of this section, we present partial answers to this question.

We begin with computational results. In the following we say that a splitting family $\mathcal A$ on $k$ is \emph{uniform} if every set $A\in\mathcal A$ has size $|A|=\floor{k/2}$ or $\ceiling{k/2}$. Observe that the standard splitting family on $k$ is uniform. We also say that splitting families $\mathcal A,\mathcal A'$ are \emph{equivalent} if $\mathcal A'$ can be obtained from $\mathcal A$ by complementing sets in $\mathcal A$ and permuting the elements of $[k]$.

\begin{proposition}
  \begin{itemize}
    \item For all $k\leq16$, every minimal splitting family is uniform.
    \item Among even $k\leq16$, there is just one minimal splitting family which is not equivalent to a standard splitting family, namely $k=8$ and
    \[\mathcal F=\{\{1,2,3,4\},\{1,2,5,6\},\{3,4,5,6\},\{1,3,5,7\}\}\text{.}
    \]
  \end{itemize}
\end{proposition}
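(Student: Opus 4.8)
This is a computational result, so the substance of the proof lies in organising an exhaustive search that provably sees every case yet terminates in reasonable time. The plan is as follows. Complementing a set $A$ does not change which sets it splits --- for even $|B|$ we have $|A^c\cap B|=|B|-|A\cap B|$, and for odd $|B|$ the splitting condition is symmetric under $|A\cap B|\leftrightarrow|B|-|A\cap B|$ --- and permuting $[k]$ is a symmetry of the entire notion, so it suffices to enumerate splitting families up to the equivalence recalled above. By the Yost--Wolff theorem every minimal family has size exactly $m:=\ceiling{k/2}$, so we look only for splitting families of that size. Encoding such a family as the $k\times m$ binary matrix $M$ whose $j$-th column is the indicator vector of its $j$-th set, equivalence becomes ``permute the rows, permute the columns, complement arbitrary columns,'' and the splitting condition reads: for every $S\subseteq[k]$ some column $j$ has $\sum_{i\in S}M_{ij}\in\{\floor{|S|/2},\ceiling{|S|/2}\}$. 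Thus the task is to enumerate, up to this symmetry group, all $k\times m$ binary matrices with the splitting property.

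The search builds $M$ one row at a time. The key pruning is that the column sums of any subset $S$ lying entirely within the already-placed rows are final, so the instant such an $S$ is split by no column we backtrack; in particular no two rows may coincide (two equal rows leave the corresponding $2$-set unsplit), so the $k$ row profiles are forced to be distinct vectors of $\{0,1\}^m$. A feasibility step strengthens this: we reject a partial matrix as soon as some $S$ cannot be split by any column no matter how the unplaced rows are filled in, the attainable value of $\sum_{i\in S}M_{ij}$ ranging over an interval determined by the placed rows. Finally, isomorph rejection --- orderly generation, or canonical augmentation --- restricts each newly added row to orbit representatives under the stabiliser of the current prefix and guarantees that each equivalence class is produced exactly once. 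With these reductions, and with $m=\ceiling{k/2}$ small, the enumeration is feasible for all $k\le16$.

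Both parts of the statement then follow by inspecting the resulting list of equivalence classes of minimal splitting families. For the first part, one checks that in every class, for every $k\le16$, each column of $M$ has weight $\floor{k/2}$ or $\ceiling{k/2}$. For the second, restrict to even $k$, locate the class of the standard family, and observe that it is the only class for every even $k\le16$ except $k=8$, where there are exactly two classes: the standard one and the class of $\mathcal F=\{\{1,2,3,4\},\{1,2,5,6\},\{3,4,5,6\},\{1,3,5,7\}\}$. That $\mathcal F$ is a splitting family of size $4=8/2$ is verified directly by testing all $2^8$ subsets, and its inequivalence to the standard family on $8$ follows from any equivalence invariant: for instance, reducing each pairwise intersection size $|A_i\cap A_j|$ by $t\mapsto\min(t,4-t)$ --- a legitimate invariant here since all sets have size $4$ --- gives the constant value $2$ for all six pairs in $\mathcal F$, whereas the standard family on $8$ has two members meeting in $3$ points, giving the value $1$.

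The main obstacle is simply to make the search at once exhaustive and affordable: the naive space of roughly $\binom{2^{k}}{\,\ceiling{k/2}\,}$ candidate families is astronomically large, so the whole argument rests on the symmetry reduction together with aggressive propagation of the requirement that every already-realised subset be split. A secondary point is that the conclusion depends on a computation; this is mitigated by re-running with an independent implementation and by the fact, checkable by hand, that the standard family and the exceptional $k=8$ family $\mathcal F$ are both genuine minimal splitting families.
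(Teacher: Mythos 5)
Your proposal is correct in substance: like the paper, the proposition rests on an exhaustive computer search modulo the stated equivalence (complementing sets and permuting $[k]$), and your verification of the two bullet points from the resulting list is sound, including the hand-checkable invariant argument that the exceptional $k=8$ family is inequivalent to the standard one (all six pairwise intersections reduce to $2$ under $t\mapsto\min(t,4-t)$, versus a pair of value $1$ in the standard family; this is legitimate since every set involved has size $4$, so complementation in $[8]$ preserves sizes). The organization of the search differs, though. You fix $k$, restrict to families of size exactly $\ceiling{k/2}$ via the Yost--Wolff bound, and generate the $k\times\ceiling{k/2}$ incidence matrix row by row with pruning on unsplit subsets and canonical-augmentation isomorph rejection. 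The paper instead fixes the number of sets $n$ and works with Hamming representations, i.e.\ sets of columns in $\{0,1\}^n$: it enumerates the \emph{non-extendable} splitting families with $n$ sets (those that cannot be extended to a splitting family on $k+1$), and every minimal family on $k\leq 2n$ is then a restriction of one of these, so the two target values $k=2n-1,2n$ for each $n$ are handled in one pass. The paper's decomposition dovetails with its later structural analysis of Hamming representations (Theorem~\ref{theorem:connected} and the Y~lemma), while your fixed-$k$ orderly generation is more self-contained and makes the symmetry group and pruning rules fully explicit; either way the proof ultimately rests on trusting the computation, which you rightly flag and mitigate by independent reimplementation and by hand-verifying the two specific families at $k=8$.
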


The method of search centered on the following notion. We say that splitting family $\mathcal A=\{A_1, \ldots, A_n\}$ on $k$ is \emph{extendable} if it is the restriction to $k$ of some splitting family on $k+1$. Our algorithm performed an exhaustive search of non-extendable splitting families with a fixed number of sets. We do not see any immediate generalizations to the exceptional splitting family with $k=8$. The code and its output may be found in \cite{bryce}.

In our next results we identify the structure of minimal splitting families under a strong additional hypothesis. To begin, we recall that if $\mathcal A=\{A_1,\ldots,A_n\}$ is a family of subsets of $[k]$, its \emph{incidence matrix} is the $n\times k$ matrix $A$ with a $1$ in the $(i,j)$ entry if $j\in A_i$, and a $0$ otherwise. When every pair from $[k]$ is split by some set in $\mathcal A$, the columns of $A$ are distinct elements of the Hamming cube $\{0,1\}^n$. In this case, the \emph{Hamming representation} of $\mathcal A$ is the graph on the subset $S\subset\{0,1\}^n$ consisting of the columns of $A$, with adjacency defined by $s\sim t$ if and only if $s,t$ differ in exactly one entry.

If $\bar x$ is a subset of the indices $1,\ldots,n$ we let $\delta_{\bar x}$ denote the element of $\{0,1\}^n$ with $1$'s in the entries indicated by $\bar x$ and $0$'s everywhere else. We say that a set $B_\delta$ of such vectors is \emph{split} if some index appears in exactly half (up to rounding) of the elements of $B_\delta$ with this representation. Thus, given a family $\mathcal A$ with Hamming representation $S$ on $\{\delta_{\bar x_1}, \ldots, \delta_{\bar x_k}\}$, we can identify a subset $B \subset [k]$ with the subset $B_\delta = \{\delta_{\bar x_j} \mid j \in B\}$ of $S$, which is split if and only if some set in $\mathcal A$ splits $B$. In our graphical representations we stratify the elements $\delta_{\bar x}$ by their \emph{weight}, $|\bar x|$. See Figure~\ref{fig:hamming} for two examples of Hamming representations.

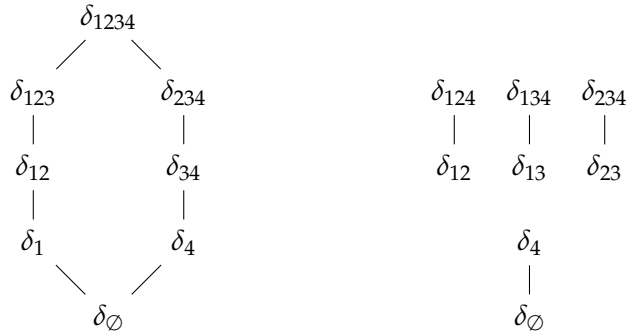
\begin{figure}[ht]
  \centering
  \begin{tikzpicture}
    \node(0) at (0,0) {$\delta_\emptyset$};
    \node(1) at (-1,1) {$\delta_1$};
    \node(4) at (1,1) {$\delta_4$};
    \node(12) at (-1,2) {$\delta_{12}$};
    \node(34) at (1,2) {$\delta_{34}$};
    \node(123) at (-1,3) {$\delta_{123}$};
    \node(234) at (1,3) {$\delta_{234}$};
    \node(1234) at (0,4) {$\delta_{1234}$};
    \draw (0)--(1)--(12)--(123)--(1234)--(234)--(34)--(4)--(0);
  \end{tikzpicture}
  \hspace{1in}
  \begin{tikzpicture}
    \node(0) at (0,0) {$\delta_\emptyset$};
    \node(4) at (0,1) {$\delta_4$};
    \node(12) at (-1,2) {$\delta_{12}$};
    \node(13) at (0,2) {$\delta_{13}$};
    \node(23) at (1,2) {$\delta_{23}$};
    \node(124) at (-1,3) {$\delta_{124}$};
    \node(134) at (0,3) {$\delta_{134}$};
    \node(234) at (1,3) {$\delta_{234}$};
    \draw (0)--(4);
    \draw (12)--(124);
    \draw (13)--(134);
    \draw (23)--(234);
  \end{tikzpicture}
  \caption{Left: The Hamming representation of the standard splitting family on $k=8$. Right: The Hamming representation of the nonstandard minimal splitting family on $k=8$.\label{fig:hamming}}
\end{figure}

In the following result, we say that a family $\mathcal A$ of subsets of $[k]$ is a \emph{$\mathord\leq t$-splitting family} if every subset of $[k]$ of size at most $t$ is split by some set in $\mathcal A$. We say that a $\mathord\leq 2$-splitting family $\mathcal A$ is \emph{connected} if its Hamming representation is a connected graph.

\begin{theorem}
  \label{theorem:connected}
  Let $\mathcal A$ be a connected $\mathord\leq4$-splitting family on $k$ of minimum size. Then for $k$ even, $\mathcal A$ is equivalent to the standard splitting family on $k$, and for $k$ odd, $\mathcal A$ is equivalent to the standard splitting family on $k+1$ with the point $k+1$ removed. In particular, $|\mathcal A|=\ceiling{k/2}$.
\end{theorem}

The proof consists of a series of lemmas. In the following, when $\bar x$ and $\bar y$ are sets of indices, we let $\bar x\bar y$ denote the union of the two sets.

\begin{lemma}[Y lemma]
  \label{lemma:Y}
  Let $S$ be the Hamming representation of a $\mathord\leq4$-splitting family $\mathcal A$. Then every element of $S$ has degree at most $2$ in $S$. In fact, $S$ cannot contain an arrangement of the following four ``Y'' types, where $\bar x, \bar y, \bar w, \bar u, \bar v$ are pairwise disjoint, and $\bar u, \bar v, \bar w$ are nonempty:
  \begin{enumerate}
    \item $\{\delta_{\bar x},\delta_{\bar x\bar y\bar u},\delta_{\bar x\bar y\bar v},\delta_{\bar x\bar y\bar w}\}$;
    \item $\{\delta_{\bar x},\delta_{\bar x\bar u},\delta_{\bar x\bar u\bar v},\delta_{\bar x\bar u\bar w}\}$;
    \item $\{\delta_{\bar x \bar u},\delta_{\bar x \bar v},\delta_{\bar x\bar u \bar v},\delta_{\bar x\bar u\bar v \bar w}\}$;
    \item $\{\delta_{\bar x \bar u \bar v},\delta_{\bar x\bar u \bar w},\delta_{\bar x \bar v \bar w},\delta_{\bar x\bar u\bar v \bar w}\}$.
  \end{enumerate}
\end{lemma}

These four arrangements are shown in Figure~\ref{fig:4forbidden}.

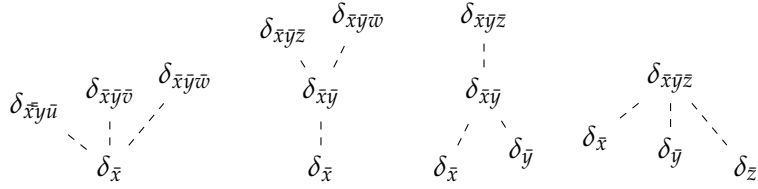
\begin{figure}[ht]
  \centering
  \begin{tikzpicture}
    \node(x) at (0,0) {$\delta_{\bar x}$};
    \node(xyu) at (-1,0.8) {$\delta_{\bar x\bar y\bar u}$};
    \node(xyv) at (0,1) {$\delta_{\bar x\bar y\bar v}$};
    \node(xyw) at (1,1.2) {$\delta_{\bar x\bar y\bar w}$};
    \draw[dashed] (x) edge(xyu) edge(xyv) edge(xyw);
  \end{tikzpicture}\quad
  \begin{tikzpicture}
    \node(x) at (0,0) {$\delta_{\bar x}$};
    \node(xu) at (0,1) {$\delta_{\bar x\bar u}$};
    \node(xuv) at (-.5,1.8) {$\delta_{\bar x\bar u\bar v}$};
    \node(xuw) at (.5,2) {$\delta_{\bar x\bar u\bar w}$};
    \draw[dashed] (x) edge(xu);
    \draw[dashed] (xu) edge(xuv) edge(xuw);
  \end{tikzpicture}\quad
  \begin{tikzpicture}
    \node(xu) at (-.5,-1) {$\delta_{\bar x \bar u}$};
    \node(xv) at (.5,-.8) {$\delta_{\bar x \bar v}$};
    \node(xuv) at (0,0) {$\delta_{\bar x\bar u \bar v}$};
    \node(xuvw) at (0,1) {$\delta_{\bar x\bar u\bar v\bar w}$};
    \draw[dashed] (xu) edge(xuv)  (xv) edge(xuv)  (xuv) edge(xuvw);
  \end{tikzpicture}\quad
  \begin{tikzpicture}
    \node(xuvw) at (0,0) {$\delta_{\bar x\bar u\bar v\bar w}$};
    \node(xuv) at (-1,-0.8) {$\delta_{\bar x\bar u \bar v}$};
    \node(xuw) at (0,-1) {$\delta_{\bar x \bar u \bar w}$};
    \node(xvw) at (1,-1.2) {$\delta_{\bar x\bar v \bar w}$};
    \draw[dashed] (xuvw) edge(xuv) edge(xuw) edge(xvw);
  \end{tikzpicture}
  \caption{Forbidden ``Y'' arrangements for the Hamming representation of a $\leq4$-splitting family; (a)--(d) appear left--right.\label{fig:4forbidden}}
\end{figure}

\begin{proof}
  To prove the arrangements of types (a)--(d) are impossible, it is sufficient to observe that in each type (a)--(d), no index appears exactly twice among the subscripts, as this means that the arrangement is not split.

  Now this implies every element of $S$ has degree at most $2$. To see this, suppose towards a contradiction that some element $s\in S$ is adjacent in the Hamming hypercube to three other elements $s_1,s_2,s_3\in S$. We claim that the $4$-element set $\{s,s_1,s_2,s_3\}$ is of one of the types~(a)--(d). Indeed, each of $s_1,s_2,s_3$ has weight one higher or one lower than $s$. If all three are higher the set is of type~(a), if two of the three are higher the set is of type~(b), if two of the three are lower the set is of type~(c), and if all three are lower the set is of type~(d). In all cases we obtain a contradiction to the previous paragraph.
\end{proof}

We remark that avoiding the types~(a)--(d) does not guarantee a $\mathord\leq4$-splitting family. For example $\{\delta_1,\delta_2,\delta_3,\delta_4\}$ is not split and not of any of the types~(a)--(d).

\begin{lemma}
  \label{lem:cycle}
  Let $S$ be the Hamming representation of a $\mathord\leq4$-splitting family $\mathcal A$ with $\emptyset\notin\mathcal A$. If $S$ is a simple cycle, then $\mathcal A$ is equivalent to the standard splitting family on $k=|S|$.
\end{lemma}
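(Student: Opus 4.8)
Since $S$ is a simple cycle (equivalently, the subgraph of the Hamming cube $Q_n$ induced on $S$ is a cycle) with $n=|\mathcal F|$, and $Q_n$ is bipartite, $k=|S|$ is even; write $k=2m$. We may assume $m\geq2$ --- for $m\leq1$ there is nothing to prove, and for $m=2$ an induced $4$-cycle of a cube is a $2$-face, a case one checks by hand. After deleting any copy of $[k]$ from $\mathcal F$ (it splits only subsets of size $\leq1$ and is harmless, and its removal leaves $S$ an induced cycle and keeps $\mathcal F$ a $\mathord\leq4$-splitting family without $\emptyset$), we may assume no coordinate is constant on $S$; so, traversing the cycle, each coordinate $i$ is flipped $2t_i$ times with every $t_i\geq1$ and $\sum_i t_i=m$, whence $n\leq m$. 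I will also use the following reduction: because the columns of $\mathcal F$ are pairwise distinct, every set of $\leq3$ positions is automatically split (otherwise its columns would agree on every coordinate restricted to that set), so $\mathord\leq4$-splitting means exactly that every $4$-element set of positions is split $2$--$2$ by some coordinate; equivalently, a $4$-set $B$ of positions is \emph{unsplit} iff there is an $M\in\{0,1\}^n$ for which the symmetric differences $\{s\triangle M:s\in B\}$ are pairwise disjoint.

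Next I normalize by complementation. Complementing the sets $A_i$ over $i$ in the support of any one column produces an equivalent family (complementation is a cube automorphism, preserves $\mathord\leq4$-splitting, and sends no $A_i\neq[k]$ to $\emptyset$) in which $\delta_\emptyset\in S$. Consider the weight $w(j)=|s_j|$ around the cycle: it is a cyclic $\pm1$ walk, so it has equally many local minima and maxima, and $\delta_\emptyset$ is a local minimum of weight $0$. \emph{Key claim: it is the only local minimum.} If $s_p$ were a local minimum of weight $\geq1$, its two cycle-neighbours would be $s_p\cup\{u\}$ and $s_p\cup\{v\}$ with $u\neq v$ and $u,v\notin s_p$; then the four columns $\delta_\emptyset,\ s_p\cup\{u\},\ s_p,\ s_p\cup\{v\}$ are distinct and, with $M=s_p$, their symmetric differences with $M$ are $\operatorname{supp}(s_p),\{u\},\emptyset,\{v\}$, which are pairwise disjoint --- an unsplit $4$-set, contradiction. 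A cyclic $\pm1$ walk with minimum value $0$ and a unique local minimum (hence a unique local maximum) satisfies, after a cyclic rotation placing $\delta_\emptyset$ at $0$, $w(j)=\min(j,2m-j)$, so its maximum weight is $m$. That maximum-weight column has weight $m\leq n\leq m$, so $n=m$ and the column is the all-ones vector $\delta_{[m]}$.

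Consequently $S$ runs monotonically up from $\delta_\emptyset$ to $\delta_{[m]}$ and then monotonically back. Relabelling the index set $[m]$ of $\mathcal F$ (which does not change $\mathcal F$), the up-leg is $\emptyset,\{1\},\{1,2\},\dots,[m]$, and the down-leg then deletes the elements in some order $\tau(1),\dots,\tau(m)$. Writing positions as $0,1,\dots,2m-1$ with $0=\delta_\emptyset$, this says $A_i$ is the cyclic arc $\{i,i+1,\dots,m+\tau^{-1}(i)-1\}$: every $A_i$ contains position $m$, avoids position $0$, and has size $m+\tau^{-1}(i)-i$. The standard splitting family on $2m$ is precisely the case $\tau=\operatorname{id}$ (all arcs of length $m$), and since complementation replaces $|A|$ by $k-|A|$ while permuting positions preserves sizes, $\mathcal F$ can be equivalent to the standard family \emph{only if} every $|A_i|=m$, i.e.\ $\tau=\operatorname{id}$. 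So it remains to show that if some arc has length $\neq m$ then $\mathcal F$ is not $\mathord\leq4$-splitting.

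This last step is the main obstacle, and is a finite but somewhat delicate combinatorial case-check; the plan is to test the $4$-sets of positions that mix ``prefix'' columns with ``down-leg'' columns against the criterion of the first paragraph. For instance, $\{0,m,x,2m{-}x\}$ turns out to be unsplit exactly when $\{\tau^{-1}(1),\dots,\tau^{-1}(x)\}=\{m{-}x{+}1,\dots,m\}$, and $\{0,x,x{+}1,m{+}1\}$ is unsplit exactly when $\tau(1)=x{+}1$; since the distinctness of columns already forces $\tau(1)\neq m$, ruling out the second family of unsplit sets forces $\tau(1)=1$ (i.e.\ the arc $A_1$ has length $m$). Peeling off coordinates one at a time by the same kind of argument --- having fixed $A_1,\dots,A_{s-1}$ as length-$m$ arcs, examining analogous $4$-sets near position $m+s$ forces $A_s$ to have length $m$ as well --- drives $\tau$ to the identity, which completes the proof. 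I expect the bookkeeping in this paragraph (enumerating the relevant shapes of mixed $4$-sets and checking they leave only $\tau=\operatorname{id}$) to be the hardest part; the weight-function argument of the second paragraph, which is where the hypotheses $\emptyset\notin\mathcal F$ and $\mathord\leq4$-splitting (via a forbidden configuration in the spirit of the Y lemma) really get used, is short.
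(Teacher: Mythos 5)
Your plan follows essentially the same route as the paper's proof: complement so that $\delta_\emptyset\in S$; show the cycle has no ``vee'' away from $\delta_\emptyset$ (your unique-local-minimum claim, proved via the unsplit quadruple $\{\delta_\emptyset,\,s_p,\,s_p\cup\{u\},\,s_p\cup\{v\}\}$, is exactly the paper's Y-of-type-(b) step), so the cycle rises monotonically to a top column and descends monotonically back; then force the order of the descent by exhibiting unsplit $4$-sets. Two of your touches are genuine gains in rigor: the flip-counting argument ($2m=\sum_i 2t_i\ge 2n$, while the top column has weight $m\le n$) actually proves $n=k/2$ and that the top column is the all-ones vector, which the paper simply asserts; and you at least notice the possibility $[k]\in\mathcal F$, which the paper passes over (though strictly speaking deleting $[k]$ changes $|\mathcal F|$ and hence the conclusion, so that case should be excluded outright rather than ``reduced away''; it cannot occur for the minimum-size families to which the lemma is applied).

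The only substantive difference is at the step you defer as the ``main obstacle,'' and there your worry is misplaced: this is precisely where the paper's short induction lives, and it is not delicate. The paper argues: if the descent has so far deleted $1,\dots,i-1$ but does not delete $i$ next, then $i$ is dropped at some later consecutive pair of down-leg columns $\delta_{\bar x i},\delta_{\bar x}$, and the four positions with columns $\{1,\dots,i\}$, $\{i,\dots,n\}$, $\bar x\cup\{i\}$, $\bar x$ form an unsplit $4$-set (every coordinate meets $1$ or $3$ of them). Your own test sets close the induction just as fast: assuming $\tau(j)=j$ for $j<s$, the positions $\{\,s-1,\;x,\;x+1,\;m+s\,\}$ have columns $\{1,\dots,s-1\}$, $\{1,\dots,x\}$, $\{1,\dots,x+1\}$, $\{s,\dots,m\}\setminus\{\tau(s)\}$, and a two-line coordinate count (counts are $3$; $3$ or $2$; $2$ or $1$; $1$ or $0$) shows this $4$-set is unsplit exactly when $\tau(s)=x+1$; letting $x$ range over $s,\dots,m-1$ forces $\tau(s)=s$, which is your inductive step and drives $\tau$ to the identity. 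So your proposal is sound and completable in a few lines, by either family of $4$-sets; as written, however, it is incomplete at exactly the step that carries the content of the lemma, which the paper does write out.
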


\begin{proof}
  By complementing sets of $\mathcal A$, we can assume $\delta_\emptyset\in S$. We first argue that the stratified graph of $S$ has the shape of a standard arrangement, that is, $S$ has a single element of maximum weight, and two elements at each intermediate weight (see Figure~\ref{fig:hamming} left). Indeed otherwise $S$ must contain elements of the form $\delta_{\bar xa},\delta_{\bar x},\delta_{\bar xb}$ (informally, the cycle must have a local minimum at a location other than $\delta_\emptyset$). But then the set $\{\delta_\emptyset,\delta_{\bar x},\delta_{\bar xa},\delta_{\bar xb}\}$ is a Y of type~(b), a contradiction.
  
  Next we argue that the $\delta$-labeling of the elements of $S$ is equivalent to a standard arrangement up to renaming the sets. To begin we know the bottom element (least weight) is $\delta_\emptyset$ and we can say that the top element (greatest weight) is $\delta_{12\cdots n}$ where $n$ is the total number of sets. Furthermore we can assume without loss of generality that the labels up the left-hand side are $\delta_{1},\delta_{12},\ldots,\delta_{12\cdots n-1}$.
  
  \begin{center}
    \begin{tikzpicture}[n/.style={draw,circle,fill,inner sep=0}]
      \node[n,label=below:$\delta_\emptyset$](0) at (0,0) {};
      \node[n,label=left:$\delta_1$](1) at (-1,1) {};
      \node[n](4) at (1,1) {};
      \node(12) at (-1,2) {$\vdots$};
      \node(34) at (1,2) {$\vdots$};
      \node[n,label=left:$\delta_{12\cdots n-1}$](123) at (-1,3) {};
      \node[n](234) at (1,3) {};
      \node[n][label=above:$\delta_{12\cdots n}$](1234) at (0,4) {};
      \draw (0)--(1)--(12)--(123)--(1234)--(234)--(34)--(4)--(0);
    \end{tikzpicture}
  \end{center}

  To conclude, we need only show that the labels down the right-hand side are exactly $\delta_{2\cdots n},\delta_{3\cdots n},\ldots,\delta_{n}$. To see this, assume as an inductive hypothesis that the first $i-1$ vertices down the right-hand side are $\delta_{2\cdots n},\ldots,\delta_{i\cdots n}$, for some $i\geq1$.
  
  We claim that the remaining vertices down the right-hand side must all omit $i$. Otherwise we would be able to find two distinct such vertices of the form $\{\delta_{\bar x},\delta_{\bar xi}\}$. But then the set $\{\delta_{1\cdots i},\delta_{\bar x},\delta_{\bar xi},\delta_{i\cdots n}\}$ is not split. This proves the claim.
  
  Now we can conclude that the vertex immediately below $\delta_{i\cdots n}$ must be $\delta_{(i+1)\cdots n}$. This completes the inductive step, and the proof.
\end{proof}

In the next result, if $S$ is a Hamming representation of an $n$-set $\mathord\leq4$-splitting family $\mathcal A$, we say that $S$ is \emph{maximal} if whenever an element of $\{0,1\}^n$ is added to $S$ the result is no longer a Hamming representation of a $\mathord\leq4$-splitting family.

\begin{lemma}
  Let $S$ be the Hamming representation of a $\mathord\leq4$-splitting family $\mathcal A$ with $\emptyset\notin\mathcal A$. If $S$ is a simple cycle, then $S$ is maximal.
\end{lemma}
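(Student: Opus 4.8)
The plan is to normalize and then exhibit a single explicit obstruction. Because being a $\mathord\leq4$-splitting family---and hence the property of being maximal---is unchanged if we complement sets of $\mathcal F$ or permute the ground set, Lemma~\ref{lem:cycle} lets us assume outright that $\mathcal F$ is the standard splitting family on $k=|S|=2n$. In this normalization, as the labeling in the proof of Lemma~\ref{lem:cycle} shows, the Hamming representation $S$ consists precisely of the ``prefix intervals'' $\delta_{[i]}$ for $0\le i\le n$ (where $[i]=\{1,\ldots,i\}$ and $[0]=\emptyset$) up the left side of the diamond, together with the ``suffix intervals'' $\delta_{\{j,\ldots,n\}}$ for $2\le j\le n$ down the right side.

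Now fix an arbitrary $t=\delta_{\bar t}\in\{0,1\}^n\setminus S$; we must show that adjoining a new ground-set element whose column is $t$ destroys the $\mathord\leq4$-splitting property. The key point is combinatorial: since $\emptyset$ and every suffix interval $\{j,\ldots,n\}$ (for $1\le j\le n$) already lies in $S$, the set $\bar t$ is nonempty and not upward-closed in $\{1,\ldots,n\}$, so there is an index $m\in\{1,\ldots,n-1\}$ with $m\in\bar t$ and $m+1\notin\bar t$. I then take the four columns $\delta_{[m-1]},\delta_{[m]},\delta_{[m+1]},t$---pairwise distinct and all lying in $S\cup\{t\}$---and show that the corresponding $4$-element subset of the extended ground set is split by no member of $\mathcal F$. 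By the observation in the proof of Lemma~\ref{lemma:Y}, a set splits a $4$-set exactly when the associated index occurs in precisely two of the four subscripts, so it suffices to check the multiplicity of each index of $[n]$ among $[m-1]$, $[m]$, $[m+1]$, $\bar t$: indices below $m$ occur at least three times, $m$ occurs exactly three times (in $[m]$, $[m+1]$, and $\bar t$), $m+1$ occurs exactly once (only in $[m+1]$), and indices above $m+1$ occur at most once. Hence no index occurs exactly twice, the $4$-set is unsplit, and $S\cup\{t\}$ is not the Hamming representation of a $\mathord\leq4$-splitting family.

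The verifications here are routine: identifying the normalized $S$ is immediate from Lemma~\ref{lem:cycle}, and the multiplicity count is a short case split. The one conceptual step---and what makes the argument uniform---is recognizing that the obstruction to $t\notin S$ is exactly the failure of $\bar t$ to be a suffix interval, which hands us a location at which to attach a three-element path of prefix-interval columns forming an unsplit configuration together with $t$. One could instead first dispose of the subcase where $t$ is Hamming-adjacent to some $s\in S$, since then $s$ would have degree $3$ in $S\cup\{t\}$, contradicting Lemma~\ref{lemma:Y}; but $t$ can lie at Hamming distance $\ge2$ from the entire cycle once $n$ is large, so I prefer the single argument above, which handles every $t$ at once.
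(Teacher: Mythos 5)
Your argument is correct and follows the same overall plan as the paper: use Lemma~\ref{lem:cycle} to reduce to the standard arrangement, and then show that adjoining any new column $t$ creates an unsplit $4$-set. The only substantive difference is organizational. The paper treats two cases --- when $1,n\notin\bar x$ it exhibits the type~(a) $Y$ formed by $\delta_\emptyset,\delta_1,\delta_n,\delta_{\bar x}$, and otherwise (WLOG $1\in\bar x$) it exhibits a type~(b) $Y$ at the three prefix intervals just below the least index absent from $\bar x$. You instead observe that any $\bar t\notin S$ must admit a ``descent'' $m\in\bar t$ with $m+1\notin\bar t$ (since $S$ already contains $\emptyset$ and all suffix intervals), and always take the three consecutive prefix intervals $\delta_{[m-1]},\delta_{[m]},\delta_{[m+1]}$ together with $t$, verifying the unsplit condition by a direct index-multiplicity count rather than by pattern-matching against the forbidden $Y$ types. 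This is the same mechanism but removes the case split, which is a pleasant tightening; both versions are sound.
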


\begin{proof}
  By the previous lemma, it is sufficient to show that the standard arrangement $S=\{\delta_\emptyset,\delta_1,\ldots,\delta_{1\cdots n},\cdots,\delta_n\}$ is maximal. Now suppose towards a contradiction that there exists $\delta_{\bar x}\notin S$ such that $S\cup\{\delta_{\bar x}\}$ is a Hamming representation of a $\mathord\leq4$-splitting family.
  
  We first claim that either $1\in\bar x$ or $n\in\bar x$. Indeed otherwise we would have that the set $\{\delta_\emptyset,\delta_1,\delta_n,\delta_{\bar x}\}$ is a $Y$ of type~(a). Thus we can suppose without loss of generality that $1\in\bar x$.
  
  
  Now let $i\in 2,\ldots,n$ be the least index such that $i\notin\bar x$. Then the set $\{\delta_{1\cdots i-2},\delta_{1\cdots i-1},\delta_{1\cdots i},\delta_{\bar x}\}$ is a $Y$ of type~(b), a contradiction which completes the proof.
\end{proof}

The lemmas so far imply that if $S$ is the Hamming representation of a $\mathord\leq4$-splitting family, then $S$ is either a single simple cycle or else a disjoint union of paths. Next we investigate constraints on the paths.

\begin{lemma}
  Let $S$ be the Hamming representation of a $\mathord\leq4$-splitting family $\mathcal A$ with $\emptyset\notin\mathcal A$. If $S$ is a path, then $\mathcal A$ is equivalent to the restriction to $k=|S|$ of the standard splitting family on $k'=2|\mathcal A|$.
\end{lemma}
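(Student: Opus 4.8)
The plan is to follow the template of the proof of Lemma~\ref{lem:cycle} as closely as possible; the only genuinely new feature is that a path has a \emph{second} endpoint, which does not behave like the ``bottom vee'' of a cycle, and that is where the extra work goes. First I would complement sets of $\mathcal F$ so that one endpoint of the path becomes $\delta_\emptyset$ (an equivalence carrying paths to paths). Reading weights along the path from $\delta_\emptyset$, there can be no \emph{interior} vertex that is a strict local minimum of the weight: such a vertex $\delta_{\bar x}$ together with its two path-neighbours $\delta_{\bar x a},\delta_{\bar x b}$ and $\delta_\emptyset$ would be a $Y$ of type~(b), contradicting Lemma~\ref{lemma:Y} (this is the same move used for cycles, now available because $\delta_\emptyset$ has degree $1$). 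Since consecutive vertices differ in a single coordinate, $S$ is an ascending chain $\delta_\emptyset=v_0\subsetneq v_1\subsetneq\cdots\subsetneq v_p$ (the ``up part'') followed by a descending chain $v_p\supsetneq v_{p+1}\supsetneq\cdots\supsetneq v_{k-1}$ (the ``down part''), each step adding or removing one element. Every vertex of $S$ is $\subseteq v_p$; if some coordinate $i$ were omitted by $v_p$, the $i$th row of the incidence matrix would be identically zero, i.e.\ $\emptyset\in\mathcal F$, contrary to hypothesis. Hence $v_p=[n]$ with $n=|\mathcal F|$, the peak has weight $n$, the up part has exactly $n$ steps, and after permuting the ground set I may take it to be $\delta_\emptyset,\delta_{[1]},\delta_{[2]},\ldots,\delta_{[n]}$. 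Writing the down part as $R_j:=v_{n+j}=[n]\setminus\{e_1,\ldots,e_j\}$ with the $e_j$ distinct, the second endpoint is $U=R_{k-1-n}$, of weight $2n-k+1$, so from $U\neq\emptyset$ we get $n+1\le k\le 2n$ and ``the restriction to $k$ of the standard splitting family on $2n$'' is meaningful. Since that restricted family is exactly the arc of the standard diamond on $n$ sets running from $\delta_\emptyset$ up the left-hand side, it remains only to show $e_j=j$ for all $j$, i.e.\ that the down part removes $1,2,3,\ldots$ in order.

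I would prove $e_j=j$ by induction on $j$. Assuming $R_{j-1}=\{j,\ldots,n\}$, suppose for contradiction $R_j=\{j,\ldots,n\}\setminus\{e\}$ with $e>j$, so $j\in R_j$. If $j$ is removed at some later step, say $R_{\ell+1}=R_\ell\setminus\{j\}$ with $j\in R_\ell\subseteq\{j,\ldots,n\}$, then exactly as in Lemma~\ref{lem:cycle} the $4$-set $\{\delta_{[j]},\delta_{R_{\ell+1}},\delta_{R_\ell},\delta_{\{j,\ldots,n\}}\}$ is unsplit --- every coordinate meets it in $1$ or $3$ elements --- contradicting that $\mathcal F$ is $\leq 4$-splitting. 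The new case, which cannot arise for a cycle (there the down part descends all the way to $\delta_\emptyset$, so $j$ must eventually be removed), is that $j$ is \emph{never} removed, i.e.\ $j\in U$; here I would instead use the $4$-set $\{\delta_{[j-1]},\delta_{[e-1]},\delta_{[e]},\delta_{R_j}\}$, checking that every coordinate $<e$ meets it in $3$ elements (using $j\in R_j$) and every coordinate $\ge e$ in $1$ element, so it is again unsplit. In both cases one must also verify the four vertices are distinct; the only coincidences that could occur would force a down-part vertex to equal an up-part vertex, impossible since $S$ is a simple path. Isolating the right $4$-sets for this ``never-removed'' case and making them work uniformly in $j$ and $e$ is the one place the path argument genuinely departs from the cycle argument, and is the main obstacle.

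Once $e_j=j$ for all $j\le k-1-n$, the labelled path $S$ is precisely the initial arc of the standard diamond on $n$ sets, so $\mathcal F$ is equivalent, via the complementations and permutations used above, to the restriction to $k$ of the standard splitting family on $k'=2n=2|\mathcal F|$, as claimed.
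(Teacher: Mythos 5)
Your proposal is correct and follows the paper's proof of this lemma almost step for step: complement so that one endpoint is $\delta_\emptyset$, use a type-(b) $Y$ together with $\delta_\emptyset$ to forbid any second turn, identify the peak with $[n]$ where $n=|\mathcal F|$ using $\emptyset\notin\mathcal F$, and then run the descending-side induction of Lemma~\ref{lem:cycle}. The one place you go beyond the paper is exactly the place you flag: the paper simply asserts that ``the inductive argument from Lemma~\ref{lem:cycle}'' forces the descending side to remove $1,2,3,\ldots$ in order, but that argument, as written for cycles, produces the unsplit set $\{\delta_{1\cdots i},\delta_{\bar x},\delta_{\bar x i},\delta_{i\cdots n}\}$ only when the index $i$ is eventually removed --- automatic for a cycle, which descends all the way to $\delta_\emptyset$, but not for a path that stops early. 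Your Case~B set $\{\delta_{[j-1]},\delta_{[e-1]},\delta_{[e]},\delta_{R_j}\}$ does fill this in: every coordinate meets it in $1$ or $3$ elements, and the distinctness issue is handled by simplicity of the path, as you say. In fact that computation nowhere uses whether $j$ is removed at a later step, so this single $4$-set settles the inductive step in all cases and your Case~A is redundant. Two small remarks: what you need to permute is the set indices (rows of the incidence matrix), not the ground set $[k]$; and, exactly like the paper, your ``peak $=[n]$'' step tacitly assumes the initial complementation does not introduce $\emptyset$ into the family, i.e.\ that no member of $\mathcal F$ equals $[k]$ --- an assumption the paper also leaves implicit and which holds in the intended application to minimum-size families.
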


\begin{proof}
  By complementing some of the sets of $\mathcal A$, we can suppose without loss of generality that one end of the path $S$ is $\delta_\emptyset$. We will show that $S$ is a subset of the Hamming representation of a standard splitting family. By permuting the indices of the sets, we can suppose that the longest initial segment of increasing weights is $\delta_\emptyset,\delta_1,\ldots,\delta_{1\cdots n}$ for some $n$. If there are no more elements of $S$, then we are done.
  
  Otherwise we can suppose that $S$ turns after $\delta_{1\cdots n}$. Then this must be the last turn, since an additional turn would form a vee shape, which together with $\delta_\emptyset$ would form a $Y$ of type~(b). Now the inductive argument from Lemma~\ref{lem:cycle} implies that the remaining points of $S$ after $\delta_{1\cdots n}$ must be $\delta_{2\cdots n},\ldots,\delta_{i\cdots n}$. It follows that $n=|\mathcal A|$, and that $S$ is the restriction of the standard splitting family on $n$.
\end{proof}

We are now ready to conclude the proof of the theorem.

\begin{proof}[Proof of Theorem~\ref{theorem:connected}]
  The lemmas together imply that $\mathcal A$ is either equivalent to the standard splitting family on $k$ or the restriction to $k$ of the standard splitting family on $k'=2|\mathcal A|$. Since $\mathcal A$ is of minimum size, $|\mathcal A|\leq\ceiling{k/2}$, which implies that $k'=k$ or $k'=k+1$ (if $k$ is odd).
\end{proof}

\section{Splitting subsets of size $2$ and $4$}

Recall from the previous section that $\mathcal A$ is a $\mathord{\leq}t$-splitting family on $k$ if $\mathcal A$ splits all subsets of $[k]$ of size at most $t$. We will also say that $\mathcal A$ is a \emph{$t$-splitting family} on $k$ if $\mathcal A$ splits all subsets of $[k]$ of size exactly $t$. In \cite{stinson-baby}, $t$-splitting families are used in an algorithm to solve the weight $t$ discrete log problem. In this section we study the least size of a $4$-splitting family and the least size of a $\mathord{\leq}4$-splitting family on $k$.

Beginning with $4$-splitting families, the article \cite{ling} established a lower bound on the least size of a \emph{uniform} $4$-splitting family (that is, a $4$-splitting family in which every set has size $\lfloor k/2 \rfloor$). Later, \cite{stinson-mt} stated that this lower bound holds for arbitrary $4$-splitting families, citing the aforementioned article as proof. In the following result we repair this minor error and prove that the claimed lower bound does indeed hold for arbitrary $4$-splitting families.

\begin{theorem}
	Let $k\geq 6$ and suppose $\mathcal A$ is a $4$-splitting family on $k$. Then $|\mathcal A|\geq\log_2{k}$.
\end{theorem}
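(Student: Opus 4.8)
The plan is to encode the $4$-splitting condition as a combinatorial constraint on the columns of the incidence matrix and then argue that too few rows force some $4$-element subset to be unsplit. Let $\mathcal F=\{A_1,\dots,A_n\}$ and form the Hamming representation $S\subseteq\{0,1\}^n$, i.e.\ the set of columns of the incidence matrix, identifying column $j$ with the vector $c_j$ recording which $A_i$ contain $j$. A subset $B=\{j_1,j_2,j_3,j_4\}$ of size $4$ is split by $A_i$ exactly when the $i$-th coordinates of $c_{j_1},\dots,c_{j_4}$ contain exactly two $1$'s. So $\mathcal F$ is a $4$-splitting family if and only if for every choice of four columns, some coordinate of the corresponding four vectors has exactly two ones. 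Equivalently: there is no quadruple of columns such that in every coordinate the number of ones among them is in $\{0,1,3,4\}$.

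The key step is to show that if $n<\log_2 k$, equivalently $k>2^n$, then such a ``bad'' quadruple must exist. Since $|S|\le 2^n<k$, many of the $k$ columns coincide; in fact some vector $v\in\{0,1\}^n$ is repeated, and more strongly, a counting argument (averaging over $\{0,1\}^n$) can be used to locate either a heavily repeated value or a convenient configuration of distinct values. The cleanest route: if some vector $v$ appears with multiplicity $\ge 3$ among the columns, take three identical columns equal to $v$ plus any fourth column $w$; in every coordinate the count of ones among $\{v,v,v,w\}$ is $0,1,3,$ or $4$ (it is $3$ or $4$ where $v$ has a one, $0$ or $1$ where $v$ has a zero), so this quadruple is unsplit — contradiction. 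Hence every value occurs with multiplicity $\le 2$, forcing $k\le 2\cdot 2^n$; this already gives $|\mathcal F|\ge\log_2 k-1$, and a slightly more careful version of the same idea should close the gap. To get the exact bound $\log_2 k$, I would instead argue that if $k>2^n$ then there are two distinct vectors $v\ne w$ each occurring at least twice (a pigeonhole refinement, using $k>2^n$ and multiplicities $\le 2$ would give exactly $2^n$ columns, so with $k>2^n$ some value repeats; combine with a second repeated or a well-chosen distinct value). For a pair of columns repeated, consider the quadruple $\{v,v,w,w\}$: in a coordinate where $v,w$ agree the count is $0,2,$ or $4$; where they differ the count is $2$. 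So $\{v,v,w,w\}$ is actually split in any disagreement coordinate — that quadruple is fine, so I must be more careful. The right configuration is $\{v,v,v,w\}$ from a triple, or $\{v,v,w,w'\}$ with $v$ repeated and $w,w'$ chosen so that $v$'s support ``dominates'': one wants, in each coordinate, count $\ne 2$, which happens iff $w,w'$ never simultaneously ``balance'' $v$'s two copies. Concretely with two copies of $v$, a coordinate has count $2$ iff ($v_i=1$ and $w_i=w'_i=0$) or ($v_i=0$ and $\{w_i,w'_i\}=\{1,1\}$) or ($v_i=1$, exactly one of $w_i,w'_i$ is $1$)... this needs case bookkeeping, so I would organize it as: among the $\ge k/2^{n-1}$ columns sharing some fixed value, if that count is $\ge 3$ we are done as above; otherwise every value class has size exactly $\le 2$ and $k\le 2^n$ unless... — iterating yields the bound.

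I expect the main obstacle to be pinning down exactly the right ``bad quadruple'' configuration that (i) is guaranteed to exist once $k>2^n$ and (ii) is genuinely unsplit, since the naive repeated-pair configuration $\{v,v,w,w\}$ fails. The triple configuration $\{v,v,v,w\}$ definitely works and definitely occurs when some value has multiplicity $3$; the whole proof therefore hinges on showing that $k>2^n$ forces multiplicity $\ge 3$ somewhere, OR on finding an alternative unsplit quadruple built from two value-classes of size $2$. I would handle the latter by noting that with two pairs $\{v,v\}$ and $\{w,w\}$ at distinct values one can instead use $\{v,v,w,w'\}$ where $w'$ is a third, arbitrary column: by freedom in choosing $w'$ one arranges to avoid count-$2$ coordinates, or one observes directly that not all three of $v,w,w'$ (with $v$ doubled) can conspire in every coordinate. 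Once the combinatorial core is in place, the arithmetic $k>2^n \Rightarrow n>\log_2 k$ is immediate, and the hypothesis $k\ge 6$ is needed only to have enough columns for the base of the argument and to rule out small degenerate cases.
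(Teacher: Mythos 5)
Your first observation is correct and rigorous: if three columns of the incidence matrix coincide (three points in one Venn region of the full family $\mathcal F$), then those three points together with any fourth point form an unsplit $4$-set, so all Venn regions of $\mathcal F$ have at most two points and $k\leq 2^{n+1}$. But as you acknowledge, this only yields $|\mathcal F|\geq\log_2 k-1$, and the step you need to close the gap is exactly the part you leave unresolved. Pigeonhole from $k>2^n$ gives only a region of multiplicity $2$, and there is no simple ``template'' quadruple built from repeated columns that is automatically unsplit: you correctly notice that $\{v,v,w,w\}$ is split in any coordinate where $v,w$ disagree, and the fallback $\{v,v,w,w'\}$ with an arbitrary third column genuinely can be split, so ``iterating'' or ``choosing $w'$ freely'' does not produce a contradiction. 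This is a real gap, not bookkeeping: the whole content of the theorem beyond the $\log_2 k-1$ bound lives here.

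The paper closes the gap with a structural argument your proposal does not contain. Assuming $k\geq 2^n+1$, it drops one set and looks at the $2^{n-1}$ Venn regions of $\mathcal F_0=\{A_1,\dots,A_{n-1}\}$; pigeonhole now does give a region $R$ with three points $x,y,z$. Every $4$-set $\{x,y,z,w\}$ is unsplit by $\mathcal F_0$, so $A_n$ must split all of them, which (after possibly complementing $A_n$) forces $A_n=\{x,y\}$ exactly. One then argues by cases on how the remaining $k-3\geq 2^n-2$ points are distributed over the regions of $\mathcal F_0$; in the hardest case (every other region has exactly two points) the unsplit witness is $\{x,z,w',w''\}$ with $w',w''$ taken from two distinct regions adjacent to $R$ -- a configuration quite different from the repeated-column quadruples you consider, and unsplit because each $A_i\in\mathcal F_0$ treats $x,z$ identically while $w',w''$ cannot both lie opposite them, and $A_n=\{x,y\}$ meets the quadruple in only one point. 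Note also that $k\geq 6$ is used concretely (to produce a fifth point in Case 1 and to guarantee two distinct regions adjacent to $R$ in Case 3), not merely to exclude degenerate small cases. To repair your proof you would need to import some version of this ``remove one set, force it to be a $2$-element set, then case-analyze'' argument; the column-multiplicity counting alone cannot reach $\log_2 k$.
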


In the proof and later in the paper, we will use the following terminology. Let $\mathcal A=\{A_1,A_2,\ldots, A_n\}$ be a family of subsets of $[k]$. For any $I\subset \{1,2,\ldots,n\}$, the \emph{Venn region} corresponding to $I$ is the set
\[\bigcap\{A_i\mid i\in I\}\cap\bigcap\{A_i^c\mid i\notin I\}\;.
\]
The \emph{multiplicity} of the Venn region corresponding to $I$, or any of its elements, is $|I|$. We will say two Venn regions corresponding to $I,I'$ are \emph{adjacent} if $|I\triangle I'|=1$.

\begin{proof}
	Assume towards a contradiction that $n<\log_2{k}$ so that $k\geq 2^n + 1$.
	
	We consider the subfamily $\mathcal A_0=\{A_1,A_2,\ldots A_{n-1}\}$. Then $\mathcal A_0$ has $2^{n-1}$ Venn regions, and since $k>2^n=2\cdot 2^{n-1}$ we conclude that some Venn region $R$ of $\mathcal A_0$ contains at least three points $x,y,z$.
  
  We can replace $A_n$ with its complement if necessary, so we can assume without loss of generality that $A_n$ contains at least two of $x,y,z$, say $x$ and $y$. If $w\in[k]$ is any other point, consider the set $\{x,y,z,w\}$. $\mathcal A_0$ does not split it since $x,y,z \in R$. Therefore, since $\mathcal A$ is a $4$-splitting family, $A_n$ must split $\{x,y,z,w\}$, implying $z,w \notin A_n$. Since $w$ was chosen arbitrarily, it follows that $A_n = \{x,y\}$. The $k-3\geq 2^n-2$ points other than $x,y,z$ are distributed among the $2^{n-1}$ Venn regions of $\mathcal A_0$; we consider several cases on how these points are distributed.
	
	\underline{Case 1:} There is a point in $R$ besides $x,y,z$.
	
	Let $w$ be such a point. We know $w\not \in A_n$. Since $k\geq 6$, we can find a fifth point $w'$, and again $w'\notin A_n$. Then $\{x,z,w,w'\}$ is not split by $A_n$ since $A_n$ only contains $x$. Further, $\{x,z,w,w'\}$ is not split by any set from $\mathcal A_0$ since $x,z,w$  are in the same Venn region of $\mathcal A_0$. Thus, $\{x,z,w,w'\}$ is not split by any set from $\mathcal A$, a contradiction.
	
 	\underline{Case 2:} There are no points in $R$ besides $x,y,z$ and some Venn region of $\mathcal A_0$ other than $R$ has at least three points.
	 
  Let $R'$ be such a region and let $w,w',w''\in R'$. Then $\{z,w,w',w''\}$ is not split by any set from $\mathcal A_0$. Moreover $A_n$ contains none of these four points, so $\{z,w,w',w''\}$ is not split by any set from $\mathcal A$, a contradiction.

 	\underline{Case 3:} There are no points in $R$ besides $x,y,z$ and no Venn region of $\mathcal A_0$ other than $R$ has at least three points.
	
	The $2^n-2$ points other than $x,y,z$ are distributed among the $2^{n-1}-1$ Venn regions of $\mathcal A_0$ other than $R$. In this case, since $2^n-2=2\cdot\left( 2^{n-1}-1\right)$, every such region must have exactly two points. Choose distinct Venn regions $R'$ and $R''$ which are adjacent to $R$. (This is possible because the assumption $k\geq 6$ implies $|\mathcal A|\geq3$, so $|\mathcal A_0|\geq 2$.) Letting $w'\in R'$ and $w''\in R''$, we have that $\{x,z,w',w''\}$ is not split by any set from $\mathcal A_0$. Moreover $\{x,z,w',w''\}$ is not split by $A_n=\{x,y\}$. Once again $\{x,z,w',w''\}$ is not split by any set from $\mathcal A$, a contradiction.
	
	This completes the proof.
\end{proof}

We now turn to $\mathord{\leq}4$-splitting families.

\begin{theorem}
  Let $k\geq5$ and suppose $\mathcal A$ is a $\mathord{\leq}4$-splitting family on $k$. Then $|\mathcal A|\geq\log_2k+3-\log_2 5$.
\end{theorem}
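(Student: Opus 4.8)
The plan is to prove, writing $n=|\mathcal{F}|$, the equivalent statement $k\leq 5\cdot 2^{n-3}$; taking logarithms and rearranging then gives exactly $n\geq\log_2 k+3-\log_2 5$.

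First, since $\mathcal{F}$ splits every two-element subset of $[k]$, the columns of its incidence matrix are pairwise distinct, so $k\leq 2^n$ and in particular $n\geq 3$; the few small cases ($k\leq 6$) are checked by hand, and henceforth I assume $n\geq 4$. The key structural device is that if $\mathcal{G}\subseteq\mathcal{F}$ and $R$ is a Venn region of $\mathcal{G}$, then no set of $\mathcal{G}$ splits any subset of $R$, so $\{A\cap R:A\in\mathcal{F}\setminus\mathcal{G}\}$ is a $\leq4$-splitting family on $R$. Applying this with $\mathcal{F}_0=\{A_1,\dots,A_{n-3}\}$, the $2^{n-3}$ Venn regions of $\mathcal{F}_0$ partition $[k]$, and each region carries a $\leq4$-splitting family using at most three sets. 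So everything reduces to understanding three-set $\leq4$-splitting families and to bounding how many points the ``large'' regions can hold.

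I would prove two lemmas. (A) A $\leq4$-splitting family on $[m]$ with at most three sets has $m\leq6$, and if $m=6$ it is equivalent to the standard splitting family on $6$, whose Hamming representation is $\{0,1\}^3$ with a pair of antipodal vertices removed. Here distinct columns force $m\leq8$, while $m=7$ (hence $m=8$) is impossible because, after complementing coordinates so the missing vertex is $\delta_\emptyset$, no one of the three sets contains exactly two of the four points $\delta_2,\delta_3,\delta_{23},\delta_{123}$ — the counts are $1,3,3$ — so that quadruple is unsplit. The uniqueness clause follows from Lemma~\ref{lem:cycle}, since by Lemma~\ref{lemma:Y} a six-vertex, maximum-degree-$2$, $\leq4$-splitting configuration must be a simple cycle. (B) If some Venn region $R$ of $\mathcal{F}_0$ has exactly six points, then every region adjacent to $R$ has at most two points. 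To see this, normalize the six columns appearing in $R$ to $\{0,1\}^3\setminus\{\delta_2,\delta_{13}\}$; for a point $p$ of a neighboring region $R'$ with column $d$, the four-set consisting of $p$ and any three points of $R$ is split by none of $A_1,\dots,A_{n-3}$ except the single coordinate in which $R$ and $R'$ differ — where it meets the four-set in three points — so it must be split within the last three coordinates, and running over a few well-chosen triples of points of $R$ forces $d\in\{\delta_\emptyset,\delta_{123}\}$, whence $|R'|\leq2$.

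To finish, count over the hypercube $Q_{n-3}$ whose vertices are the Venn regions of $\mathcal{F}_0$. Let $S$ be the set of six-point regions. By (B), $S$ is an independent set in $Q_{n-3}$ and every region in its neighborhood $N(S)$ has at most two points; since $Q_{n-3}$ is regular and bipartite it has a perfect matching, so $|N(S)|\geq|S|$. Every region outside $S$ has at most five points, hence
\[ k=\sum_R|R|\;\leq\;6|S|+2|N(S)|+5\bigl(2^{n-3}-|S|-|N(S)|\bigr)\;=\;5\cdot 2^{n-3}+|S|-3|N(S)|\;\leq\;5\cdot 2^{n-3}. \]
The main obstacle is Lemma~(B): the estimate from (A) alone is only $k\leq6\cdot 2^{n-3}$, and it is precisely the fact that a full six-point region makes all of its neighbors nearly empty that sharpens this to $5\cdot 2^{n-3}$ and so to the stated bound. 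Proving (B) cleanly depends on the uniqueness clause of (A) together with a short but finicky enumeration of the columns a neighboring point may carry; the small values of $k$ must also be dispatched separately.
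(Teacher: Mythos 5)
Your decomposition is genuinely different from the paper's, and its main steps check out. The paper peels off only the last \emph{two} sets: it shows no Venn region of $\{A_1,\dots,A_{n-2}\}$ can hold four points, that three-point regions cannot form a path $R_1\sim R_2\sim R_3$, hence (splitting the region hypercube into $2^{n-4}$ squares) at most $2^{n-3}$ regions hold three points, giving $k\le 2\cdot2^{n-2}+2^{n-3}=5\cdot2^{n-3}$. You peel off the last \emph{three} sets, classify $3$-set $\mathord{\leq}4$-splitting families (your exclusion of $m=7,8$ via the quadruple with columns $\delta_2,\delta_3,\delta_{23},\delta_{123}$ is correct, and uniqueness at $m=6$ does follow from Lemmas~\ref{lemma:Y} and~\ref{lem:cycle} once one checks that a non-antipodal missing pair leaves a degree-$3$ vertex), and then exploit the rigidity of six-point regions. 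Your Lemma~(B) is in fact true in a stronger form: no column $d$ at all is admissible for a point adjacent to a six-point region (in your normalization, $d=\delta_\emptyset$ is killed by the triple with columns $\delta_\emptyset,\delta_1,\delta_3$ and $d=\delta_1$ by $\delta_\emptyset,\delta_1,\delta_{12}$), so neighbouring regions are actually empty; your weaker ``at most two'' suffices. The independence of $S$, the bound $|N(S)|\ge|S|$ via a perfect matching of the regular bipartite $Q_{n-3}$, and the count $k\le 5\cdot2^{n-3}+|S|-3|N(S)|$ are all sound. Your route costs a finicky finite analysis of three-set families but buys structural information about extremal configurations; the paper's two-set peeling reaches the same bound $k\le5\cdot2^{n-3}$ with less machinery.

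The one genuine problem is the reduction to $n\ge4$. For $k\ge7$ it is automatic from your Lemma~(A) (three sets force $k\le6$), but the case $k=6$, $n=3$ cannot be ``checked by hand'': the standard splitting family $\{\{2,3,4\},\{3,4,5\},\{4,5,6\}\}$ on $[6]$, whose Hamming representation is exactly your $6$-cycle, is a $\mathord{\leq}4$-splitting family of size $3$, while $\log_26+3-\log_25>3$. So the inequality $k\le5\cdot2^{n-3}$ — and the theorem as stated — fails at $k=6$. This is not a defect peculiar to your argument: the paper's own proof tacitly assumes $n\ge4$ when it decomposes the $2^{n-2}$ Venn regions into $2^{n-4}$ squares, so the statement really needs the hypothesis $k\ge7$ (or an explicit exception at $k=6$). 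With that proviso, your proof is complete modulo writing out the finite verifications in (A) and (B).
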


\begin{proof}
  Suppose $\mathcal A=\{A_1,\ldots,A_n\}$ and this time let $\mathcal A_0=\{A_1,\ldots,A_{n-2}\}$ and $\mathcal A_1 = \{A_{n-1}, A_n\}$.
  
  We first claim that no Venn region of $\mathcal A_0$ contains four points. Indeed if there were such $x,y,z,w$, then since $\mathcal A$ is $2$-splitting we would have that each of $x,y,z,w$ lie in distinct Venn regions of $\mathcal A_1$. Since we assumed $k\geq 5$, we can let $u$ be any other point. Without loss of generality, $u$ is in the same Venn region of $\mathcal A_1$ as $x$. But then the $4$-element set $\{x,y,z,u\}$ is not split, a contradiction.
  
  We next claim there cannot exist Venn regions $R_1,R_2,R_3$ of $\mathcal A_0$, each with three points, such that $R_1\sim R_2\sim R_3$. Indeed, otherwise we would be able to find $x_i$ in each $ R_i$ such that all three $x_i$ are in the same Venn region of $\mathcal A_1$. (This holds because three $3$-element subsets of a $4$-element set must have a point in common.) Suppose without loss of generality that $x_1,x_2,x_3\in A_{n-1}\cap A_n$. Let $y_2$ be another element of $R_2$ which is in a Venn region of $\mathcal A_1$ adjacent to that of $x_2$. Then $\{x_1,x_2,y_2,x_3\}$ is not split by any set of $\mathcal A$. Indeed $A_{n-1}$ and $A_n$ contain three of these points, and $A_1,\ldots,A_{n-2}$ each agree on $x_2,y_2$ and at least one of $x_1,x_3$.

  Now let $S$ be the set of Venn regions of $\mathcal A_0$ with three points. We claim that $|S|\leq2^{n-3}$. For this, observe that the adjacency graph of the $2^{n-2}$ Venn regions of $\mathcal A_0$ is isomorphic to a hypercube and thus may be decomposed into $2^{n-4}$ disjoint squares. If $|S|>2^{n-3} = 2 \cdot 2^{n-4}$ then one of these squares must contain three elements of $S$. It follows that we can find three elements $R_1,R_2,R_3\in S$ such that $R_1\sim R_2\sim R_3$. This contradicts the previous claim.
  
  We now have that the number of elements $k$ satisfies:
  \[k\leq 3|S|+2|S^c| = 2(|S|+|S^c|) + |S|
    \leq 2\cdot 2^{n-2} + 2^{n-3}
    = 5\cdot 2^{n-3}\text{.}
  \]
  This implies the desired bound.
\end{proof}

It is remarkable that the lower bounds in the previous two theorems are so close to one another. The distance between them is more visible if one considers the inverse functions, that is, the corresponding upper bounds on the $k$ for which there exists a $4$- or $\mathord{\leq}4$-splitting family on $k$ of a given size. Of course, these bounds are unlikely to be tight.

\section{Splittable families}

Recall from the introduction that a family $\mathcal B = \{B_1, \ldots, B_n\}$ of subsets of $[k]$ is called \emph{splittable} if there exists a single set $A \subset [k]$ such that $A$ splits $B_i$ for all $i\leq n$. While some splittable families have many distinct splitters, others are ``just barely splittable'' in the sense that they have very few splitters. In this section we investigate the question: which $n$-set splittable family on $k$ has the fewest number of splitters, and what is this number? Solutions to this question can be used to find bounds on the least size of an ``$n$-splitting family'' (see \cite{reu14} for details). Our solution is somewhat involved even when $n=1,2$, and we provide computational results when $n=3$.

We first consider the case when $n=1$, that is, $\mathcal B=\{B\}$ and $B\subset[k]$. If $|B|$ is even then the number of splitters of $\mathcal B$ is
\[2^{k-|B|}\cdot \binom{|B|}{|B|/2}\text{,}
\]
and if $|B|$ is odd then the number of splitters of $\mathcal B$ is
\[2^{k-|B|}\cdot \left(\binom{|B|}{|B|/2 + 1}+\binom{|B|}{|B|/2 - 1}\right) = 2^{k-|B|+1}\binom{|B|}{(|B|-1)/2}\text{.}
\]
The following result is unsurprising, and we record the proof.

\begin{proposition}
  The number of splitters of $\mathcal B=\{B\}$ on $[k]$ is minimized when $|B|=k$ if $k$ is even, and when $|B| = k-1$ when $k$ is odd. The minimum number of splitters is asymptotic to $2^k/\sqrt{k}$.
\end{proposition}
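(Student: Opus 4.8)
The plan is to treat the even and odd cases of $k$ by analyzing the two displayed formulas for the number of splitters as functions of $b=|B|$, and to show each is (weakly) decreasing in $b$ over the relevant range, so the minimum is attained at the largest admissible $b$. First I would handle the even-$k$ case. For $b$ even, the count is $f(b)=2^{k-b}\binom{b}{b/2}$; I would compare $f(b)$ with $f(b+2)$ (the next even value) by forming the ratio $f(b+2)/f(b)=\tfrac14\cdot\binom{b+2}{b/2+1}/\binom{b}{b/2}$, simplify the binomial quotient to $\tfrac{(b+1)(b+2)}{(b/2+1)^2}=\tfrac{4(b+1)}{b+2}$, and observe this ratio is $<1$ exactly when $4(b+1)<4(b+2)$, which always holds. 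For odd $b$ I would do the analogous computation with the formula $2^{k-b+1}\binom{b}{(b-1)/2}$, again comparing consecutive odd values and reducing the binomial ratio; and I would also need a single cross-comparison showing $f$ at an odd value exceeds $f$ at the neighboring even value (or vice versa), so that the global minimum over all $b\le k$ really occurs at $b=k$. For odd $k$ the same monotonicity analysis applies but now the largest admissible $b$ is $b=k$, which is odd, and one compares its value against $b=k-1$; the claim is that $b=k-1$ wins, which again comes down to a small inequality between $2^{k-(k-1)+1}\binom{k}{(k-1)/2}$ and $2^{k-k}\binom{k}{\lfloor k/2\rfloor}$, i.e. essentially $2\binom{k}{(k-1)/2}$ versus $\binom{k}{(k-1)/2}$ after accounting for the factor of $2$ from the $2^{k-b}$ term — so the even value $b=k-1$ is smaller by a factor of $2$.

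For the asymptotic statement, I would then just evaluate the minimum. When $k$ is even the minimum is $\binom{k}{k/2}$, and the classical central binomial coefficient estimate $\binom{k}{k/2}\sim 2^k/\sqrt{\pi k/2}$ (from Stirling) gives the asymptotic $2^k/\sqrt{k}$ up to the constant $\sqrt{2/\pi}$; when $k$ is odd the minimum is $2^{2}\binom{k}{(k-1)/2}=4\binom{k}{(k-1)/2}$ with $b=k-1$ — wait, more carefully, with $b=k-1$ even the count is $2^{k-(k-1)}\binom{k-1}{(k-1)/2}=2\binom{k-1}{(k-1)/2}\sim 2\cdot 2^{k-1}/\sqrt{\pi(k-1)/2}$, which is again $\Theta(2^k/\sqrt k)$ with the same implied constant in the limit. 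So both parities yield the same asymptotic order, and I would state the result with the understanding that ``asymptotic to $2^k/\sqrt k$'' is meant up to a constant (or I would insert the precise constant $\sqrt{2/\pi}$ if the paper's conventions call for it).

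The routine but slightly fiddly part is bookkeeping the interleaving of even and odd values of $b$: because the two formulas live on different residue classes, monotonicity within each class is not by itself enough, and one must also verify the comparison across adjacent parities in the right direction. I expect this interleaving check to be the only real obstacle; once it is in place, the conclusion that the minimizer is $b=k$ (even case) or $b=k-1$ (odd case) is immediate, and the asymptotics follow from Stirling's formula applied to the central binomial coefficient. I would also remark that the $2^{k-b}$ factor is what drives the minimum toward large $b$: shrinking $b$ by $1$ or $2$ doubles or quadruples that factor, and the binomial coefficient cannot compensate, since $\binom{b}{\lfloor b/2\rfloor}$ grows only like $2^b/\sqrt b$, i.e. slower than the $2^{k-b}$ factor shrinks.
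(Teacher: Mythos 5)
Your proposal is correct and follows essentially the same line as the paper: establish monotonicity of the splitter count in $|B|$ within each parity class, resolve the cross-parity comparison to locate the minimum at the largest even $|B|$, and then invoke the Stirling estimate for the central binomial coefficient. The paper packages this slightly differently (it shows the step $|B|\mapsto |B|-1$ from odd to even always decreases the count, then uses Pascal's identity for the even-to-even step in place of your ratio computation), and, as you correctly flag, the stated asymptotic $2^k/\sqrt{k}$ is really $\Theta(2^k/\sqrt{k})$ with the hidden constant $\sqrt{2/\pi}$.
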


\begin{proof}
	We first claim that if $|B|=k-i$ is odd (and nonempty), then removing one point from $B$ decreases the number of splitters. Indeed the number of splitters of $B$ is
  \[2^{i+1}\binom{k-i}{(k-i-1)/2}\text{,}
  \]
	and the number of splitters of $B$ with a point removed is
  \[2^{i+1} \binom{k-i-1}{(k-i-1)/2}\text{.}
  \]
	It is not difficult to calculate that the latter expression is smaller than the first whenever $k-i \geq 3$. (Note that when $k-i=1$, the two expressions are equal.)
  
  In particular the number of splitters is minimized when $|B|$ is even. We next claim that if $|B|=k-i$ is even (and nonempty), then removing two points from $B$ increases the number of splitters, that is:
  \[2^i \binom{k-i}{(k-i)/2} < 2^{i+2} \binom{k-i-2}{(k-i-2)/2} \text{.}
  \]
  This comes from applying Pascal's identity:
	\begin{align*}
		2^i\binom{k-i}{\frac{k-i}2} &= 2^i\left(\binom{k-i-2}{\frac{k-i-2}2-1} + 2\binom{k-i-2}{\frac{k-i-2}2} + \binom{k-i-2}{\frac{k-i-2}2+1}\right) \\
		&< 2^{i+2}\binom{k-i-2}{\frac{k-i-2}2} \text{.}
	\end{align*}
	The desired result follows.
  
  For the second statement, we now see that when $k$ is even the minimum number of splitters of a $1$-set family on $k$ is exactly $\binom{k}{k/2}$. It is a standard application of Stirling's approximations to conclude that this is asymptotic to $2^k/\sqrt{k}$.
\end{proof}

We now turn to the case when $n=2$. We begin by fixing some notation. Given a family $\mathcal B = \{B_1, B_2\}$, we say the \emph{arrangement} of $\mathcal B$ is the quadruple $(a_1,b,a_2,d)$, where $a_1=|B_1\backslash B_2|$, $b=|B_1\cap B_2|$, $a_2=|B_2\backslash B_1|,$ and $d=|B_1^c\cap B_2^c|$ (see Figure~\ref{fig:two-set}). The number of splitters of $\mathcal B$ is determined by its arrangement, and we will often use the family and the arrangement interchangeably. We let $\splitters(\mathcal B)$ and $\splitters(a_1,b,a_2,d)$ both denote the number of splitters of $\mathcal B$.

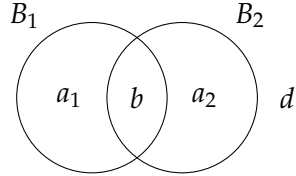
\begin{figure}[H]
	\center
	\begin{tikzpicture}
		\draw (0:.6) circle (1);
		\draw (180:.6) circle (1);
		\node at (-.9,0) {$a_1$};
		\node at (0,0) {$b$};
		\node at (.9,0) {$a_2$};
		\node at (2,0) {$d$};
    \node at (-1.5,1.1) {$B_1$};
    \node at (1.5,1.1) {$B_2$};
	\end{tikzpicture}
	\caption{A family $\mathcal B=\{B_1,B_2\}$ with arrangement $(a_1,b,a_2,d)$.\label{fig:two-set}}
\end{figure}

We now proceed to calculate a formula for $\splitters(a_1,b,a_2,d)$. Initially suppose that $|B_1|=a_1+b$ and $|B_2|=a_2+b$ are both even. Then if $A$ contains $i$ elements from $B_1 \cap B_2$, then $A$ is a splitter if and only if $A$ contains exactly $\frac{a_1 + b}2 - i$ elements from $B_1 \backslash B_2$ and $\frac{a_2 + b}2 - i$ elements from $B_2 \backslash B_1$. Thus the total number of splitters of $\mathcal B$ is given by:
\[\splitters(a_1, b, a_2, d) = 2^d\sum_{i=0}^b \binom{a_1}{\frac{a_1+b}{2}-i}
  \binom{b}{i}\binom{a_2}{\frac{a_2+b}{2}-i}\text{.}
\]
(Here if $z\in\mathbb{N}$ and $x\neq0,\ldots,z$ we define $\binom{z}{x}=0$.)

Note that if $|B_1|=a_1+b$ is odd and $|B_2|=a_2+b$ is even, then
\begin{align*}
	\splitters(a_1, b, a_2, d)
  &= 2^d \sum_{\epsilon = 0}^1 \sum_{i = 0}^b
         \binom{a_1}{\floor{\frac{a_1 + b}{2}} + \epsilon - i}
         \binom{b}{i}\binom{a_2}{\frac{a_2 + b}{2} - i} \\
	&= 2^d\sum_{i = 0}^b
        \binom{a_1 + 1}{\frac{(a_1 + 1) + b}{2} - i}
        \binom{b}{i}\binom{a_2}{\frac{a_2 + b}{2} - i} \\
	&= \splitters(a_1 + 1, b, a_2, d)\;.
\end{align*}
Similarly if both $|B_1| = a_1 + b$ and $|B_2| = a_2 + b$ are odd, we have
\[\splitters(a_1, b, a_2, d) = \splitters(a_1+1,b,a_2+1,d)\;.
\]

We are now ready to give the characterization of $2$-set families with the minimum number of splitters.

\begin{theorem}
  \label{min2}
  Let $k=a_1+b+a_2+d$ be fixed. Then $\splitters(a_1,b,a_2,d)$ is minimized by the following arrangements:
  \begin{itemize}
    \item if $k\equiv0\pmod{3}$, minimum at $(m,m,m,0)$, where $m=\frac{k}3$;
    \item if $k\equiv1\pmod{3}$, minimum at $(m-2,m,m,0)$, where $m=\frac{k+2}3$, and;
    \item if $k\equiv2\pmod{3}$, minimum at $(m,m,m+2,0)$, where $m=\frac{k-2}3$.
  \end{itemize}
  Moreover, the minimum number is asymptotic to $2^k/k$.
\end{theorem}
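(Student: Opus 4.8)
The plan is to replace the binomial sum for $\splitters(a_1,b,a_2,d)$ by an exact integral formula in which the slack parameter $d$ disappears, reduce the minimization to maximizing a quadratic form in $a_1,b,a_2$, read off the asymptotics by Laplace's method, and finally settle integrality and parity by a finite combinatorial clean-up. The first ingredient is the identity
\[
  \splitters(a_1,b,a_2,d)=\frac{2^{k}}{\pi^{2}}\iint_{[-\pi/2,\pi/2]^{2}}\cos^{a_1}\!u\;\cos^{b}(u+v)\;\cos^{a_2}\!v\;du\,dv ,
\]
which I would derive by extracting the coefficient of $x^{\abs{B_1}/2}y^{\abs{B_2}/2}$ from the generating function $2^{d}(1+x)^{a_1}(1+xy)^{b}(1+y)^{a_2}$: setting $x=e^{i\alpha}$, $y=e^{i\beta}$, using $(1+e^{i\alpha})e^{-i\alpha/2}=2\cos(\alpha/2)$ and the analogous identities for the $xy$ and $y$ factors, and then substituting $\alpha=2u$, $\beta=2v$ collapses the constant $2^{d}\cdot 2^{a_1+b+a_2}=2^{k}$ out front and leaves exactly the displayed integral. (I invoke the reductions already recorded to assume $\abs{B_1}=a_1+b$ and $\abs{B_2}=a_2+b$ are even, which forces the parity condition $a_1\equiv b\equiv a_2\pmod 2$ and replaces $k$ by $k$, $k+1$, or $k+2$ according to how many sizes were rounded up; I return to this below.) The crucial feature is that $2^{d}$ has merged into $2^{k}$: on a fixed ground set $[k]$, minimizing $\splitters$ over arrangements is the same as minimizing $I(a_1,b,a_2):=\iint\cos^{a_1}u\cos^{b}(u+v)\cos^{a_2}v$ over same-parity triples with $a_1+b+a_2\le k$, the slack being $d$. (A quick reindexing of the binomial sum also shows $\splitters$ is symmetric in $a_1,b,a_2$ in this reduced regime, so only the multiset $\{a_1,b,a_2\}$ matters.)

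Next I would extract the asymptotics from this formula. The modulus of the integrand of $I$ attains its maximum, $1$, only at $(u,v)=(0,0)$, near which its logarithm equals $-\tfrac12\bigl(a_1u^{2}+b(u+v)^{2}+a_2v^{2}\bigr)$ to second order. Laplace's method—equivalently, the local central limit theorem for the pair $\bigl(\abs{A\cap B_1},\abs{A\cap B_2}\bigr)$ for uniformly random $A\subseteq[k]$, whose covariance matrix is $\tfrac14\left(\begin{smallmatrix}a_1+b&b\\ b&a_2+b\end{smallmatrix}\right)$—then gives, uniformly when $a_1,b,a_2$ are all of order $k$,
\[
  \splitters(a_1,b,a_2,d)=(1+o(1))\,\frac{2^{k+1}}{\pi\sqrt{Q}},\qquad Q:=(a_1+b)(a_2+b)-b^{2}=a_1a_2+a_1b+a_2b ,
\]
while in the degenerate cases ($b=0$ or $a_1=0$) the single central term of the binomial sum already yields a lower bound of order $2^{k}/k$, so these are never minimizers. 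Hence minimizing $\splitters$ is, to leading order, the same as maximizing $Q$. Writing $Q=\tfrac12\bigl[(a_1+b+a_2)^{2}-a_1^{2}-b^{2}-a_2^{2}\bigr]$ shows $Q$ increases with the total $a_1+b+a_2$ and, for fixed total, is maximized when $a_1,b,a_2$ are as equal as possible; so the maximizer has $d=0$ and $a_1=b=a_2=k/3$ when $3\mid k$, giving $Q_{\max}=k^{2}/3$ and $\min_{a_1+b+a_2+d=k}\splitters=(1+o(1))\,\tfrac{2\sqrt3}{\pi}\cdot 2^{k}/k=\Theta(2^{k}/k)$, which is the stated asymptotic (the constant is not $1$, exactly as with the $\sqrt{k}$ statement for $n=1$).

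To identify the minimizing arrangement exactly I would prove two monotonicity facts directly for the counting function. First, $\splitters(a_1,b,a_2,d)\ge\splitters(a_1+2,b,a_2,d-2)$, and likewise with $a_1$ replaced by $b$ or by $a_2$; equivalently $\iint\cos^{a_1}u\cos^{b}(u+v)\cos^{a_2}v\,\sin^{2}\!u\;du\,dv\ge0$. This forces the minimum to occur at $d=0$. Second, among arrangements with $d=0$ the count is minimized by the most balanced same-parity triple summing to $k$; a short computation then identifies this as $(m,m,m,0)$, $(m-2,m,m,0)$, $(m,m,m+2,0)$ in the three residue classes (these minimize $a_1^{2}+b^{2}+a_2^{2}$ subject to the parity constraint, and diverting one or two points into $d$ only hurts, since the $\Theta(k)$ loss in $Q$ from shrinking the total beats the gain from rebalancing). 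Finally, an arrangement of $[k]$ with some $\abs{B_i}$ odd cannot do better: by the reductions it equals an even-size family on $[k+1]$ or $[k+2]$, which by the integral formula carries an extra factor $2$ or $4$ that the small change in $I$ does not offset.

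The main obstacle is precisely the pair of monotonicity statements above. The plain asymptotic is too crude to separate the true minimizer from its near-balanced competitors: two such arrangements have $Q$-values differing by only $\Theta(k)$, hence splitter counts differing by a factor $1\pm\Theta(1/k)$—the same size as the $o(1)$ Laplace error—so one must argue exactly. This can be done either by pushing the integral estimate to second order (an Edgeworth-type correction, uniform over near-balanced arrangements) or by direct inequality arguments on $\splitters=2^{d}\sum_i\binom{a_1}{\frac{a_1+b}{2}-i}\binom{b}{i}\binom{a_2}{\frac{a_2+b}{2}-i}$ (and on the corresponding cosine integrals). Neither monotonicity is a pure termwise or pointwise comparison—the sequence $\binom{a_1}{j}$ is log-concave but not concave, and $\cos^{b}(u+v)$ changes sign for odd $b$—so both require controlling the tails carefully; I expect essentially all of the work to be there, small $k$ being in any case checkable by computer.
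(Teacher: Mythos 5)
Your reduction is sound as far as it goes: the exact integral representation, the observation that the $2^d$ factor merges into $2^k$, and the Laplace/local-CLT estimate $\splitters\approx 2^{k+1}/\bigl(\pi\sqrt{a_1a_2+a_1b+a_2b}\bigr)$ are all correct (the paper records essentially the same approximation via de~Moivre--Laplace, and gets the $2^k/k$ asymptotic from Franel numbers), and you correctly diagnose that this approximation is too coarse to identify the exact minimizer, since competing near-balanced arrangements differ only by a relative $\Theta(1/k)$, the same order as the uncontrolled error term. But that diagnosis is where your argument stops: the two monotonicity statements you then invoke --- that the count does not increase when points are moved from the outside region $d$ into the sets, and that among $d=0$ arrangements the most balanced same-parity triple minimizes --- are exactly the content of the theorem, and you state them without proof, explicitly deferring ``essentially all of the work.'' Neither is routine: your proposed equivalent form $\iint\cos^{a_1}u\,\cos^{b}(u+v)\,\cos^{a_2}v\,\sin^2u\,du\,dv\ge0$ has a sign-changing integrand when $b$ is odd, and the termwise binomial comparisons fail in the tails (as you note), so no proof is actually supplied. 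The odd-size clean-up (``an extra factor $2$ or $4$ that the small change does not offset'') is likewise asserted asymptotically rather than proved, though that part is the easier one to repair.

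For comparison, the paper's proof is entirely elementary and supplies precisely the missing inequalities: a swap lemma showing $\splitters$ is symmetric in $a_1,b,a_2$ via an explicit bijection $A\mapsto (A\cap B_1)\cup((B_2\setminus B_1)\setminus A)$; an easy parity lemma disposing of odd-sized sets; a reduction to $d=0$ by iterating a cited inequality $\splitters(a_1+1,b-1,a_2+1,d-1)\le\splitters(a_1,b,a_2,d)$; and the key ``two-point'' lemma $\splitters(a_1,b,a_2,0)\le\splitters(a_1-2,b,a_2+2,0)$ for $2\le a_1\le a_2$, proved by pairing the $i$ and $b+1-i$ terms of the binomial sum and applying the mediant inequality to the resulting ratio of products of binomial coefficients. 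Until you prove your two monotonicity claims at that level of exactness (either by such direct binomial manipulations or by a genuinely uniform second-order expansion of your integral), the proposal is a correct plan with the central step missing, not a proof.
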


The proof consists of a series of ``point-moving'' lemmas. Each one shows that given an arbitrary arrangement, we can find a way to nudge it towards the proposed minimum arrangement without increasing the number of splitters.

\begin{lemma}
	\label{swap lemma}
	If $a_1+b$ and $a_2+b$ are even and $d\geq0$, then for any permutation $(a_1', b', a_2')$ of $(a_1, b, a_2)$, we have $\splitters(a_1, b, a_2, d) = \splitters(a_1', b', a_2', d)$.
\end{lemma}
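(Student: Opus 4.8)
The plan is to use the explicit formula derived just above for $\splitters(a_1,b,a_2,d)$ in the both-even case, namely $\splitters(a_1,b,a_2,d)=2^d\sum_i \binom{a_1}{(a_1+b)/2-i}\binom{b}{i}\binom{a_2}{(a_2+b)/2-i}$. First I would record the small but essential observation that the hypothesis that $a_1+b$ and $a_2+b$ are both even forces $a_1$, $b$, and $a_2$ to share a common parity, so \emph{every} permutation $(a_1',b',a_2')$ of the triple again has $a_1'+b'$ and $a_2'+b'$ even and the displayed formula applies verbatim to it. Since the symmetric group $S_3$ is generated by the transposition swapping the two outer entries and the transposition swapping an outer entry with the middle entry, it suffices to prove invariance of $\splitters(\cdot,\cdot,\cdot,d)$ under these two swaps. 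The swap of $a_1$ and $a_2$ is immediate: relabelling the summation index shows the formula is literally unchanged (the factor $2^d$ plays no role here).

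For the main point, the swap of $a_1$ and $b$, I would pass to a combinatorial description. Dividing out the trivial factor $2^d$ coming from the $d$ points outside $B_1\cup B_2$, $\splitters(a_1,b,a_2,d)/2^d$ is the number of sign vectors $\tau\colon B_1\cup B_2\to\{\pm1\}$ with $\sum_{x\in B_1}\tau(x)=\sum_{x\in B_2}\tau(x)=0$ (using that $|B_1|,|B_2|$ are even, so ``splits'' means ``exactly half''). Now flip the signs on the overlap $B_1\cap B_2$: this is an involution of the set of sign vectors, and it turns the two conditions into the single symmetric condition that the three partial sums over the Venn regions $B_1\setminus B_2$, $B_1\cap B_2$, and $B_2\setminus B_1$ are all equal. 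The resulting count, $\sum_{s}\binom{a_1}{(a_1+s)/2}\binom{b}{(b+s)/2}\binom{a_2}{(a_2+s)/2}$, is manifestly symmetric in $(a_1,b,a_2)$, which gives the lemma; one then checks routinely that this sum equals the displayed binomial sum via $\binom{n}{m}=\binom{n}{n-m}$.

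If one prefers to stay purely algebraic, the same swap can be obtained directly from the formula: write $p=(a_1+b)/2$ and $q=(a_2+b)/2$, reindex $i\mapsto p-j$ in $\sum_i\binom{a_1}{p-i}\binom{b}{i}\binom{a_2}{q-i}$, and apply $\binom{a_2}{q-p+j}=\binom{a_2}{a_2-q+p-j}$ together with the identity $a_2-q+p=(a_1+a_2)/2$ to recognise the result as the corresponding sum for the triple $(b,a_1,a_2)$. Either way, the only genuine content is the $a_1\leftrightarrow b$ transposition — the outer swap is free — and the one place to be careful is the parity bookkeeping that guarantees the both-even formula is legitimately available for every permutation of the triple.
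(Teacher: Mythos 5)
Your proof is correct, and it reaches the lemma by a somewhat different route than the paper. The paper fixes a family $\mathcal B=\{B_1,B_2\}$ with arrangement $(a_1,b,a_2,0)$, replaces $B_2$ by $B_1\triangle B_2$ to get a family with arrangement $(b,a_1,a_2,0)$, and exhibits an explicit bijection on splitters, $A\mapsto (A\cap B_1)\cup\bigl((B_2\setminus B_1)\setminus A\bigr)$ (i.e.\ complement the splitter on the region $B_2\setminus B_1$), then gets the remaining permutations from the obvious $a_1\leftrightarrow a_2$ symmetry and generation of $S_3$ by transpositions. Your argument works instead at the level of the closed formula: encoding splitters as $\pm1$-vectors and flipping signs on $B_1\cap B_2$ is the same ``complement on one Venn region'' trick, but you organize it so that the count becomes $\sum_s\binom{a_1}{(a_1+s)/2}\binom{b}{(b+s)/2}\binom{a_2}{(a_2+s)/2}$, which is manifestly symmetric in all three regions, so the full $S_3$-invariance drops out in one stroke (your purely algebraic reindexing variant also checks out). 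Two things your write-up adds that the paper leaves implicit are worth keeping: the observation that the hypothesis forces $a_1\equiv b\equiv a_2\pmod 2$, so the both-even formula is legitimately available for every permuted triple, and the clean separation of the trivial $2^d$ factor. What the paper's version buys in exchange is a set-level injection that requires no prior derivation of the summation formula and whose one-sided form ($\splitters(\mathcal B)\le\splitters(\mathcal B')$ before invoking the inverse) matches the style of the later point-moving lemmas; what yours buys is that the symmetry is visible at a glance rather than verified transposition by transposition.
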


\begin{proof}
  It suffices to consider the case when $d=0$, since otherwise both sides of the inequality simply have an extra factor of $2^d$. Further it suffices to show that $\splitters(a_1,b,a_2,0)=\splitters(b,a_1,a_2,0)$, since by symmetry $\splitters(a_1,b,a_2,0)=\splitters(a_2,b,a_1,0)$, and all other permutations are generated by these two.
  
  To show that $\splitters(a_1,b,a_2,0)=\splitters(b,a_1,a_2,0)$, fix a family $\mathcal B = \{B_1, B_2\}$ with arrangement $(a_1,b,a_2,0)$ and let $\mathcal B'=\{B_1, B'_2\}$, where $B'_2 = B_1 \triangle B_2$, so that $\mathcal B'$ has arrangement $(b,a_1,a_2,0)$. Next define a mapping of sets $A\mapsto A'$ by $A' = (A \cap B_1) \cup ((B_2 \backslash B_1) \backslash A)$. We claim that $A\mapsto A'$ is an injection from the splitters of $\mathcal B$ to the splitters of $\mathcal B'$.
	
	We first argue that if $A$ splits $\mathcal B$ then $A'$ splits $\mathcal B'$. Since $A$ splits $B_1$ and $A' \cap B_1 = A \cap B_1$, $A'$ splits $B_1$ as well. By the assumption that $a \equiv b \equiv c\pmod{2}$, all of $B_1,B_2,B_2'$ are even-sized. Now if $A$ contains $i$ elements from $B_1 \cap B_2$, then $A$ contains $\frac{a + b}{2} - i$ elements from $B_1 \setminus B_2$ and $\frac{b + c}{2} - i$ elements from $B_2 \backslash B_1$. Thus $A'$ contains $i$ elements from $B_1 \setminus B'_2$ and $\frac{a + b}{2} - i$ elements from $B_1 \cap B'_2$ and $c - (\frac{b + c}{2} - i) = \frac{c - b}{2} + i$ elements from $B'_2 \setminus B_1$. Thus $A'$ contains exactly $\frac{a + c}{2}$ elements from $B'_2$, so $A'$ splits $\mathcal B'$.
  
  Finally note that since $d=0$, we can recover $A$ from $A'$ by the formula $A=(A'\cap B_1)\cup((B_2\setminus B_1)\setminus A')$. This shows that $A\mapsto A'$ is injective, so $\splitters(\mathcal B)\leq\splitters(\mathcal B')$. Moreover the symmetry of the inverse mapping shows that $\splitters(\mathcal B')\leq\splitters(\mathcal B)$, concluding the proof.
  %
\end{proof}

Recall that the definition of splitting allows rounding up or down in the case of odd-sized sets. Thus if a family has an odd-sized set, one would expect it to have more splitters than a similar family with even-sized sets only. The following result confirms this intuition.

\begin{lemma}
  \label{no_odd}
	Suppose that $a_1+b$ and $a_2+b$ are even, and let $\epsilon_1,\epsilon_2\in \{-1,0,1\}$. Then
  \[\splitters(a_1, b, a_2, 0)\leq\splitters(a_1+\epsilon_1, b-\epsilon_1-\epsilon_2, a_2+\epsilon_2,0)\text{.}
  \]
\end{lemma}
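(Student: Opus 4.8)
The plan is to prove the inequality by realizing both arrangements as families on the \emph{same} ground set $[k]$ with $k=a_1+b+a_2$ (legitimate since $d=0$ and the right-hand arrangement has the same total), and then showing that every splitter of the left-hand family is automatically a splitter of the right-hand one; taking cardinalities then gives $\splitters(a_1,b,a_2,0)\le\splitters(a_1+\epsilon_1,b-\epsilon_1-\epsilon_2,a_2+\epsilon_2,0)$. We may assume throughout that the right-hand arrangement is genuine, i.e.\ all four coordinates are nonnegative, since otherwise there is nothing to prove and the construction below cannot be carried out.

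First I would fix $B_1,B_2\subseteq[k]$ realizing $(a_1,b,a_2,0)$ and construct $B_1',B_2'$ by relocating at most two points among the Venn regions $B_1\setminus B_2$, $B_1\cap B_2$, $B_2\setminus B_1$, where $\epsilon_2$ governs how $B_1$ changes and $\epsilon_1$ how $B_2$ changes. A short case analysis on the nine pairs $(\epsilon_1,\epsilon_2)$ (choosing distinct representatives when two moves are needed) produces sets $B_1',B_2'$ such that $\{B_1',B_2'\}$ has arrangement $(a_1+\epsilon_1,b-\epsilon_1-\epsilon_2,a_2+\epsilon_2,0)$, each $B_i'$ differs from $B_i$ in at most one element, and $|B_1'|=|B_1|-\epsilon_2$, $|B_2'|=|B_2|-\epsilon_1$. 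Verifying that the Venn-region sizes come out correctly in each of the nine cases is the only bookkeeping in the proof, and it is routine.

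The key step is then immediate. Suppose $A$ splits $\{B_1,B_2\}$; since $a_1+b$ and $a_2+b$ are even this means $|A\cap B_1|=\tfrac{a_1+b}{2}$ and $|A\cap B_2|=\tfrac{a_2+b}{2}$ exactly. Consider $B_1'$ (the argument for $B_2'$ is identical). If $\epsilon_2=0$ then $B_1'=B_1$ and $A$ still splits it. If $\epsilon_2=1$ then $B_1'=B_1\setminus\{p\}$ has odd size $a_1+b-1$, and $|A\cap B_1'|$ equals $\tfrac{a_1+b}{2}$ or $\tfrac{a_1+b}{2}-1$ according as $p\notin A$ or $p\in A$; either way it lies in $\{\tfrac{|B_1'|-1}{2},\tfrac{|B_1'|+1}{2}\}$, so $A$ splits $B_1'$. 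If $\epsilon_2=-1$ then $B_1'=B_1\cup\{p\}$ has odd size $a_1+b+1$ and $|A\cap B_1'|$ equals $\tfrac{a_1+b}{2}$ or $\tfrac{a_1+b}{2}+1$, again a legitimate split. Hence $A$ splits $\{B_1',B_2'\}$, so $\splitters(\{B_1,B_2\})\subseteq\splitters(\{B_1',B_2'\})$, which is the inequality. I do not expect a genuine obstacle here: the rounding slack in the definition of ``splits'' exactly absorbs the change of one point when a set's size passes from even to odd, and the only thing needing care is the case-by-case check that the relocations yield the claimed arrangement while changing each $B_i$ by at most one element.
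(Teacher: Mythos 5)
Your proposal is correct and follows essentially the same approach as the paper: realize both arrangements on the same ground set by moving at most two points, then observe that since $A$ meets each even-sized $B_i$ in exactly $|B_i|/2$ points, changing $B_i$ by a single point produces an odd-sized $B_i'$ whose rounding slack absorbs the change, so every splitter of $\mathcal B$ remains a splitter of $\mathcal B'$. The paper states the construction and the verification tersely ("derived from $\mathcal B$ according to the $\epsilon_i$ in the natural way... it is easy to see"); your write-up fills in exactly the bookkeeping the paper leaves implicit.
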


\begin{proof}
	Let $\mathcal B=\{B_1,B_2\}$ be a family with arrangement $(a_1,b,a_2,0)$, and let $\mathcal B'=\{B_1',B_2'\}$ be the family with arrangement $(a_1+\epsilon_1, b-\epsilon_1-\epsilon_2, a_2+\epsilon_2,0)$ that is derived from $\mathcal B$ according to the $\epsilon_i$ in the natural way. (For example, if $\epsilon_1 = 1$ and $\epsilon_2 = 0$, we can construct $B'$ from $B$ by selecting some point $x \in B_1\cap B_2$ and placing it in $B_1'\setminus B_2'$.) It is easy to see that if $A$ splits $\mathcal B$, then $A$ splits $\mathcal B'$ too, completing the proof.
\end{proof}

The next result allows one to move points from outside the sets $B_1,B_2$ into the sets without increasing the number of splitters.

\begin{lemma}
  \label{ext_zero}
  For fixed $k$, for any arrangement $(a_1,b,a_2,d)$ on $k$, there exists an arrangement $(a_1',b',a_2',0)$ on $k$ such that
  \[\splitters(a_1',b',a_2',0)\leq\splitters(a_1,b,a_2,d)\text{.}
  \]
\end{lemma}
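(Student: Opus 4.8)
The plan is to reduce the general case to the $d=0$ case by moving the $d$ points outside $B_1 \cup B_2$ into the sets one at a time, each move adding a point to $B_1 \cap B_2$, and to show that each such move does not increase the number of splitters. Concretely, I would prove the single-step claim that for any arrangement with $d \geq 1$,
\[
\splitters(a_1, b+1, a_2, d-1) \leq \splitters(a_1, b, a_2, d),
\]
and then iterate $d$ times to land at an arrangement of the form $(a_1, b+d, a_2, 0)$, which is of the desired shape with $a_1' = a_1$, $b' = b+d$, $a_2' = a_2$.

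To prove the single-step claim, I would first reduce to the case where $|B_1|$ and $|B_2|$ are both even. Indeed, using the identities established just before Theorem~\ref{min2} (that $\splitters$ is unchanged when we increase an odd-sized block's defining coordinate to make it even), both $\splitters(a_1,b,a_2,d)$ and $\splitters(a_1,b+1,a_2,d-1)$ can be rewritten as $\splitters$ of arrangements with even-sized $B_1, B_2$; I would just need to check that the reductions are compatible, i.e. that a parity-fixing perturbation of $(a_1,b,a_2,d)$ pushes forward to one of $(a_1,b+1,a_2,d-1)$ via the same rule. With both sets even, the cleanest route is an injection argument in the style of Lemma~\ref{swap lemma}: fix $\mathcal B = \{B_1, B_2\}$ realizing $(a_1, b, a_2, d)$ with a distinguished point $x \notin B_1 \cup B_2$, let $\mathcal B' = \{B_1 \cup \{x\}, B_2 \cup \{x\}\}$ realizing $(a_1, b+1, a_2, d-1)$, and define $A \mapsto A' = A \setminus \{x\}$. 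Since $|B_1|, |B_2|$ are even, $|B_1 \cup \{x\}|$ and $|B_2 \cup \{x\}|$ are odd, so $A'$ splits $B_i \cup \{x\}$ precisely when $|A' \cap (B_i \cup \{x\})| \in \{(|B_i|)/2, (|B_i|+2)/2\}$; if $A$ splits $B_i$ then $|A \cap B_i| = |B_i|/2$, and since $x \notin B_i$ we get $|A' \cap (B_i \cup \{x\})| = |A \cap B_i| + [x \in A] \in \{|B_i|/2, |B_i|/2 + 1\}$, which is exactly what is needed. The map $A \mapsto A \setminus \{x\}$ is two-to-one (the fiber over $A'$ is $\{A', A' \cup \{x\}\}$), but crucially both preimages of a valid $A'$ are themselves valid splitters of $\mathcal B$ — if $A'$ splits $B_i \cup \{x\}$ with $x \notin B_i$, then $|A' \cap B_i| = |A' \cap (B_i \cup\{x\})| - [x \in A']$, and a short case check on whether $x \in A'$ shows $|A' \cap B_i| = |B_i|/2$ in both cases. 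Hence $2\,\splitters(\mathcal B') \geq \#\{A : A \setminus \{x\} \text{ splits } \mathcal B'\} = \#\{\text{splitters of } \mathcal B\} \cdot$ (correction for the freedom in $x$), and more carefully one counts $\splitters(\mathcal B) = 2\,\splitters(\mathcal B')$ exactly, giving the inequality (indeed an equality) we want.

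Actually, since the argument seems to yield $\splitters(a_1,b,a_2,d) = 2\,\splitters(a_1, b+1, a_2, d-1)$ in the doubly-even case, the honest thing is to present it as an exact recursion there, then observe the parity reductions only cost us the (harmless) inequality direction in general. I expect the main obstacle to be exactly this bookkeeping around parity: one must verify that the parity-normalization identities preceding Theorem~\ref{min2} interact correctly with the "push $x$ inward" operation, since adding $x$ to both $B_1$ and $B_2$ flips both parities at once, whereas the normalization identities adjust $a_1$ or $a_2$. A clean way to sidestep subtleties is to handle the four parity classes of $(|B_1|, |B_2|)$ separately: in each class, first normalize, then apply the even-even step, and check the bound holds with room to spare. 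Iterating the single step $d$ times then yields the theorem, with the output arrangement $(a_1, b+d, a_2, 0)$.
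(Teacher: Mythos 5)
Your key single-step claim is false, and in fact the inequality runs in the opposite direction. Suppose $a_1+b$ and $a_2+b$ are even and you move an outside point $x$ into $B_1\cap B_2$. If $A$ splits $B_i$, then $|A\cap(B_i\cup\{x\})|$ equals $|B_i|/2$ or $|B_i|/2+1$ (according to whether $x\in A$), and both values split the odd-sized set $B_i\cup\{x\}$; hence every splitter of $\mathcal B$ is already a splitter of $\mathcal B'$, so $\splitters(a_1,b,a_2,d)\leq\splitters(a_1,b+1,a_2,d-1)$, typically strictly. Concretely, for $(a_1,b,a_2,d)=(0,2,0,1)$ on $k=3$ one has $\splitters(0,2,0,1)=4$ while $\splitters(0,3,0,0)=6$; this also refutes your composite claim, since your proposed destination $(a_1,b+d,a_2,0)=(0,3,0,0)$ has more splitters than the original (a valid witness for the lemma here is instead $(1,1,1,0)$, with $2$ splitters). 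The counting argument is flawed as well: a splitter of $\mathcal B'$ containing $x$ need not split $\mathcal B$ (with $B_1=B_2=\{1,2\}$ and $x=3$, the set $\{3\}$ splits both $B_i\cup\{x\}$ but neither $B_i$), so the asserted relation $\splitters(\mathcal B)=2\,\splitters(\mathcal B')$ does not hold. The root of the problem is that your move flips both $|B_1|$ and $|B_2|$ from even to odd, and odd-sized sets are easier to split because rounding in either direction is allowed, so pushing a free point into the intersection tends to create splitters rather than destroy them.

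The paper's proof is designed around exactly this parity obstruction: after reducing to the case where $a_1+b$ and $a_2+b$ are even (Lemma~\ref{no_odd}) and arranging $b>0$ (Lemma~\ref{swap lemma}), it applies the move $(a_1,b,a_2,d)\mapsto(a_1+1,b-1,a_2+1,d-1)$, valid when $b,d>0$ by the cited result from the earlier REU report, which leaves the sizes $|B_1|=a_1+b$ and $|B_2|=a_2+b$ (hence their parities) unchanged, and then iterates until $d=0$. To salvage your outline you would need to replace the ``push $x$ into $B_1\cap B_2$'' step by a parity-preserving move of this kind and prove the corresponding inequality; the step you chose cannot be repaired by bookkeeping, since the inequality it needs is genuinely reversed.
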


\begin{proof}
	By Lemma \ref{no_odd}, we need only consider the case when $a_1+b$ and $a_2+b$ are both even. By Lemma~\ref{swap lemma} we can suppose that $b>0$ (otherwise $d=k$ and this is clearly not minimal). We now use the result \cite[Theorem~3.11]{reu14}, which states that if $b,d>0$ then we have:
  \[\splitters(a_1+1,b-1,a_2+1,d-1)\leq\splitters(a_1,b,a_2,d)\text{.}
  \]
  Note that the arrangement on the left-hand side again satisfies the hypotheses of Lemma~\ref{swap lemma}. Thus we can repeat the process inductively to obtain the desired conclusion.
\end{proof}

The next result is the key, as it allows one to move $2$ points from one of the sets to the other without increasing the number of splitters, provided it would make the regions more balanced in size.

\begin{lemma}
  \label{2 point lemma}
  Assume that $a_1+b$ and $a_2+b$ are even. If $2\leq a_1\leq a_2$ then
  \[\splitters(a_1,b,a_2,0)\leq\splitters(a_1-2,b,a_2+2,0) \text{.}
  \]
\end{lemma}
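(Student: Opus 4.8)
The plan is to convert the inequality into one about sums of products of three binomial coefficients, then to peel it down via Pascal's rule and a symmetry to a single unimodality-type comparison, which is where the hypothesis $a_1\le a_2$ will do its work. Throughout put $\alpha=\tfrac{a_1+b}{2}$, $\gamma=\tfrac{a_2+b}{2}$, and $\delta=\alpha-\gamma=\tfrac{a_1-a_2}{2}\le 0$; note $a_1\equiv a_2\equiv b\pmod 2$, so $\delta\in\mathbb Z$. Since $a_1+b$ and $a_2+b$ are even, the earlier formula for $\splitters$ gives $\splitters(a_1,b,a_2,0)=\sum_i\binom{a_1}{\alpha-i}\binom{b}{i}\binom{a_2}{\gamma-i}$, and, using $\tfrac{(a_1-2)+b}{2}=\alpha-1$ and $\tfrac{(a_2+2)+b}{2}=\gamma+1$, also $\splitters(a_1-2,b,a_2+2,0)=\sum_i\binom{a_1-2}{\alpha-1-i}\binom{b}{i}\binom{a_2+2}{\gamma+1-i}$.

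First I would expand both sums down to a common core $\binom{a_1-2}{\cdot}\binom{b}{i}\binom{a_2}{\cdot}$ by applying Pascal's rule twice: $\binom{a_1}{\alpha-i}=\binom{a_1-2}{\alpha-i}+2\binom{a_1-2}{\alpha-1-i}+\binom{a_1-2}{\alpha-2-i}$, and similarly for $\binom{a_2+2}{\gamma+1-i}$. Writing $T(r,s):=\sum_i\binom{a_1-2}{\alpha+r-i}\binom{b}{i}\binom{a_2}{\gamma+s-i}$ and cancelling the common $2T(-1,0)$ term, the difference of the two sides equals $T(-1,1)+T(-1,-1)-T(0,0)-T(-2,0)$. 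The substitution $i\mapsto b-i$, combined with $\binom{a_1-2}{j}=\binom{a_1-2}{a_1-2-j}$ and $\binom{a_2}{j}=\binom{a_2}{a_2-j}$, yields the symmetry $T(r,s)=T(-2-r,-s)$; hence $T(-1,1)=T(-1,-1)$ and $T(-2,0)=T(0,0)$, so $\splitters(a_1-2,b,a_2+2,0)-\splitters(a_1,b,a_2,0)=2\bigl(T(-1,-1)-T(0,0)\bigr)$, and it suffices to show $T(-1,-1)\ge T(0,0)$.

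Reindexing by $m=\alpha-1-i$ in $T(-1,-1)$ and by $m=\alpha-i$ in $T(0,0)$ writes this as $\sum_m w_m g_m\ge 0$, where $w_m=\binom{a_1-2}{m}\binom{a_2}{m-\delta}\ge 0$ and $g_m=\binom{b}{\alpha-1-m}-\binom{b}{\alpha-m}$. The observation that makes this tractable is that the involution $m\mapsto a_1-1-m$ sends $g_m$ to $-g_m$ (immediate from $\binom{b}{t}=\binom{b}{b-t}$), so pairing $m$ with $a_1-1-m$ — the fixed point, if any, contributing $0$ — gives $\sum_m w_m g_m=\sum_{m<(a_1-1)/2}\bigl(w_m-w_{a_1-1-m}\bigr)g_m$. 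For $m<(a_1-1)/2$ one has $g_m>0$ (again by unimodality of $t\mapsto\binom{b}{t}$ about $b/2$), so it remains to prove the combinatorial claim: $w_m\ge w_{m'}$ whenever $m'=a_1-1-m>m$.

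This claim is the crux. By the symmetry of binomial coefficients, $\binom{a_1-2}{m'}=\binom{a_1-2}{m-1}$ and $\binom{a_2}{m'-\delta}=\binom{a_2}{m-\delta+1}$, so $w_{m'}=\binom{a_1-2}{m-1}\binom{a_2}{m-\delta+1}$. When $m=0$ this is $0$ and the claim is trivial; for $m\ge 1$ every binomial in sight is a genuine positive number (since $1\le m\le a_1-2$ and $0< m-\delta< a_2$ in the relevant range), so $w_m\ge w_{m'}$ is equivalent to $\frac{\binom{a_1-2}{m}}{\binom{a_1-2}{m-1}}\ge\frac{\binom{a_2}{m-\delta+1}}{\binom{a_2}{m-\delta}}$. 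Using $\binom{n}{k}/\binom{n}{k-1}=(n-k+1)/k$ and clearing denominators, this becomes $m'(m-\delta+1)\ge m(m'-\delta+1)$, which rearranges to $(m'-m)(1-\delta)\ge 0$; since $m'>m$ and $\delta\le 0$, this is true, and it is the one place the hypothesis $a_1\le a_2$ — indeed only $a_1\le a_2+2$ — is used. I expect the effort to be concentrated in this crux together with the bookkeeping of the Pascal expansion and the identity $T(r,s)=T(-2-r,-s)$; keeping track of the parities of $a_1,b$ and of the boundary cases where a binomial vanishes requires some care but introduces no new ideas.
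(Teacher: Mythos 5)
Your argument is correct, and its skeleton is in fact the paper's in algebraic disguise: your double Pascal expansion is exactly the paper's case analysis on how a splitter meets the two points moved from $B_1\setminus B_2$ to $B_2$ (the cross term $2T(-1,0)$ is the "exactly one of $x,y$" case), and your target $T(-1,1)\ge T(0,0)$ is precisely the paper's inequality $S\le S'$ for splitters omitting both points. Where you genuinely diverge is in how that residual inequality is settled. The paper pairs the terms $i\leftrightarrow b+1-i$ along the $b$-index, factors out $\binom{b}{i}-\binom{b}{i-1}\ge0$, and proves the remaining difference of products $\binom{a_1-2}{\cdot}\binom{a_2}{\cdot}$ is nonnegative by a ratio estimate via the Mediant inequality, treating $b$ odd (the unpaired middle term) separately and dismissing the "contain both" case as similar. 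You instead reflect along the $a_1$-index, $m\leftrightarrow a_1-1-m$, which makes the easy unimodality factor the $b$-binomial difference $g_m$ and reduces the crux to the rearrangement $(m'-m)(1-\delta)\ge0$ — no mediant needed — while your symmetry $T(r,s)=T(-2-r,-s)$ shows the "contain both" and "omit both" counts are literally equal, absorbs the odd-$b$ middle term automatically (the fixed point has $g_m=0$), and exposes that only $a_1\le a_2+2$, not $a_1\le a_2$, is used. Two harmless overstatements to fix in a write-up: $g_m$ need only be $\ge0$ (it vanishes when the indices fall outside $[0,b]$), and "every binomial in sight is positive" should carry the caveat that whenever a factor of $w_{m'}$ vanishes the claim $w_m\ge w_{m'}$ is trivial; with those noted, the boundary cases are as routine as you say.
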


\begin{proof}
  Let $\mathcal B=\{B_1,B_2\}$ be a family with arrangement $(a_1,b,a_2,0)$ and $\mathcal B'=\{B_1',B_2'\}$ be a family with arrangement $(a_1-2,b,a_2+2,0)$. We can again assume $\mathcal B'$ is constructed from $\mathcal B$ in the natural way, that is, fix $x,y\in B_1\backslash B_2$ and let $B_1' = B_1 \setminus \{x,y\}$ and $B_2' = B_2 \cup \{x,y\}$. We wish to show that there exists an injection from the splitters of $\mathcal B$ to the splitters of $\mathcal B'$. Note immediately that if $A$ is a splitter of $\mathcal B$ that contains either $x$ or $y$, but not both, then $A$ also splits $\mathcal B'$. Thus it remains to show that there are fewer splitters of $\mathcal B$ that contain (omit) both $x,y$ than splitters of $\mathcal B'$ that contain (omit) both $x,y$.

  We address the ``omit'' case with the ``contain'' case being similar. Let $S$ be the number of splitters of $\mathcal B$ that omit $x,y$, and $S'$ be the number of splitters of $\mathcal B'$ that omit $x,y$. We wish to show that $S\leq S'$. To proceed, let us first assume that $b$ is even. Let $t_i=\frac{a_i+b}{2}$, the \emph{target} number of elements of $B_i$ for a splitter $A$ of $\mathcal B$. We calculate:
  \begin{align*}
    S'-S
      &=\sum_{i=0}^b
      \binom{a_1-2}{t_1-i-1}\binom{b}{i}\binom{a_2}{t_2-i+1}
      -\binom{a_1-2}{t_1-i}\binom{b}{i}\binom{a_2}{t_2-i}\\
      &=\sum_{i=0}^{b/2}
      \binom{a_1-2}{t_1-i-1} \binom{b}{i} \binom{a_2}{t_2-i+1}
      -\binom{a_1-2}{t_1-i} \binom{b}{i} \binom{a_2}{t_2 - i} \\
      &\qquad+\binom{a_1-2}{t_1-(b+1-i)} \binom{b}{b+1-i} \binom{a_2}{t_2-(b+1-i)+1}\\
      &\qquad-\binom{a_1-2}{t_1-(b+1-i)} \binom{b}{b+1-i} \binom{a_2}{t_2-(b+1-i)} \\
      &=\sum_{i = 0}^{b/2}
      \binom{b}{i}
      \left(\binom{a_1-2}{t_1-i-1} \binom{a_2}{t_2-i+1}
      -\binom{a_1-2}{t_1-i} \binom{a_2}{t_2-i}\right) \\
      &\qquad+\binom{b}{i - 1}
      \left(\binom{a_1-2}{t_1-i} \binom{a_2}{t_2-i}
      -\binom{a_1-2}{t_1-i-1} \binom{a_2}{t_2-i+1}\right) \\
      &=\sum_{i=0}^{b/2}
      \left(\binom{b}{i}-\binom{b}{i-1}\right)
      \left(\binom{a_1-2}{t_1-i-1}\binom{a_2}{t_2-i+1}
      -\binom{a_1-2}{t_1-i}\binom{a_2}{t_2-i}\right)
  \end{align*}
  The second line above was the key step, wherein we paired term $i$ with term $j=b+1-i$. In the third equality we reflect five binomial coefficients and observe that $a_i-t_i=t_i-b$.

  Clearly, $\binom{b}{i}-\binom{b}{i-1}$ is positive for each $i\leq b/2$. Thus it is enough to show that for all $i\leq b/2$ we have
  \begin{equation}
    \label{eq:term}
    \binom{a_1-2}{t_1-i-1}\binom{a_2}{t_2-i+1}-\binom{a_1-2}{t_1-i}\binom{a_2}{t_2-i}
    \geq0\text{.}
  \end{equation}
  Note that if the second term of inequality~\eqref{eq:term} is zero, we are done. On the other hand if the second term of inequality~\eqref{eq:term} is nonzero, then we claim that the first term is nonzero as well. To see this, the indices where the first term is nonzero are $[t_1-a_1+1,t_1-1]\cap[t_2-a_2+1,t_2+1]=[t_1-a_1+1,t_1-1]$, and the indices where the second term is nonzero are $[t_1-a_1+2,t_1]\cap[t_2-a_2,t_2]=[t_1-a_1+2,t_1]$. (We are using here that $a_1\leq a_2$, $t_1\leq t_2$, and $t_2-a_2\leq t_1-a_1$.) The only index in the latter set but not the former is $i=t_1$, but the hypothesis $2\leq a_1$ implies $b/2<t_1$, and so this index is not in the sum.
    
  Now for indices $i$ such that the terms of inequality~\eqref{eq:term} are nonzero, we have:
  \begin{equation}
    \label{eq:frac}
    \frac{\binom{a_1-2}{t_1-i-1}\binom{a_2}{t_2-i+1}}
         {\binom{a_1-2}{t_1-i}\binom{a_2}{t_2-i}}
      =\frac{(t_1-i)(a_2-t_2+i)}{(a_1-t_1+i-1)(t_2-i+1)}
      =\frac{(t_1-i)(t_2-b+i)}{(t_1-b+i-1)(t_2-i+1)}.
  \end{equation}
  It suffices to show that the last quantity in equation~\eqref{eq:frac} is $>1$. Recall that the Mediant Inequality states that if $A,B,C,D>0$ and $\frac{A}{B}>\frac{C}{D}$ then $\frac{A}{B}>\frac{A+C}{B+D}>\frac{C}{D}$. Since $t_1\leq t_2$ we can write:
  \[\frac{t_2 - b + i}{t_1 - b + i - 1} > 1 = \frac{b - 2i + 1}{b - 2i + 1}\text{.}
  \]
  Since we are assuming the numerator and denominator of the left-hand side are positive, and since $b-2i+1$ is positive as well, the Mediant Identity implies that
  \[\frac{t_2 - b + i}{t_1 - b + i - 1}
  >\frac{t_2 - i + 1}{t_1 - i}
  >\frac{b - 2i + 1}{b - 2i + 1}\text{.}
  \]
  This implies that the last quantity in Equation~\eqref{eq:frac} is $>1$, as desired.

  If $b$ is odd then the calculation of $S'-S$ is similar. We again pair the $i$ and $b+1-i$ terms. Of course the $i=(b+1)/2$ term has nothing to pair with, but fortunately it cancels out completely. To see this note that the left and right terms are reflections of one another, since
  \begin{align*}
    (t_1-(b+1)/2-1)+(t_1-(b+1)/2)&=a_1-2,\text{ and}\\
    (t_2-(b+1)/2+1)+(t_2-(b+1)/2)&=a_2\text{.}
  \end{align*}
  Thus in the second line we can take the sum from $i=1$ up to $i=(b-1)/2$. The rest of the proof is the same as before.
\end{proof}

The combination of Lemmas~\ref{swap lemma} and~\ref{2 point lemma} means that (when $d=0$) moving two points from any region to another region with fewer points does not increase the number of splitters. We are now ready to prove our main result.

\begin{proof}[Proof of Theorem~\ref{min2}]
  We will give the proof when $k = 3m$, the remaining cases are similar. By the lemmas together, it suffices to show that for any even $x,y\geq0$ we have:
  \[\splitters(m,m,m,0)\leq\splitters(m+x, m-x+y, m-y,0)\;.
  \]
  We show this in two steps, with each step consisting of several applications of Lemma~\ref{2 point lemma} and~\ref{swap lemma}. Assuming $x\leq y$, first use the two lemmas to achieve:
  \[\splitters(m,m,m,0) \leq \splitters(m+x,m,m-x,0)\text{.}
  \]
  We then use the two lemmas again to achieve:
  \[\splitters(m+x,m,m-x,0) \leq \splitters(m+x, m-x+y, m-y,0)\text{.}
  \]
  We refer to Figure~\ref{fig:lattice} for a visualization of these two inequalities. The steps are similar if $y\leq x$. This concludes the proof that $\splitters(m,m,m,0)$ is minimal.
  
  For the last statement, observe that $\splitters(k/3,k/3,k/3,0)$ is exactly equal to $\sum_i\binom{k/3}{i}^3$. These are \emph{Franel numbers}, and it is not hard to see this expression is asymptotic to $2^k/k$. (See for instance \cite{franel}.)
\end{proof}

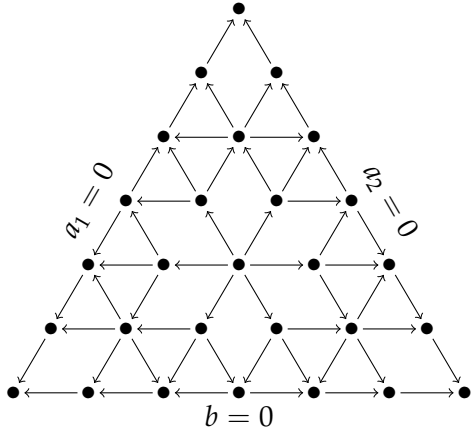
\begin{figure}[ht]
\centering
\begin{tikzpicture}[inner sep=1]
	\node at (-3, -1.7) (060) {$\bullet$};
	\node at (-2, -1.7) (051) {$\bullet$};
	\node at (-1, -1.7) (042) {$\bullet$};
	\node at (0, -1.7) (033) {$\bullet$};
	\node at (1, -1.7) (024) {$\bullet$};
	\node at (2, -1.7) (015) {$\bullet$};
	\node at (3, -1.7) (006) {$\bullet$};
	\node at (-2.5, -0.85) (150) {$\bullet$};
	\node at (-1.5, -0.85) (141) {$\bullet$};
	\node at (-0.5, -0.85) (132) {$\bullet$};
	\node at (0.5, -0.85) (123) {$\bullet$};
	\node at (1.5, -0.85) (114) {$\bullet$};
	\node at (2.5, -0.85) (105) {$\bullet$};
	\node at (-2, 0) (240) {$\bullet$};
	\node at (-1, 0) (231) {$\bullet$};
	\node at (0, 0) (222) {$\bullet$};
	\node at (1, 0) (213) {$\bullet$};
	\node at (2, 0) (204) {$\bullet$};
	\node at (-1.5, 0.85) (330) {$\bullet$};
	\node at (-0.5, 0.85) (321) {$\bullet$};
	\node at (0.5, 0.85) (312) {$\bullet$};
	\node at (1.5, 0.85) (303) {$\bullet$};
	\node at (-1, 1.7) (420) {$\bullet$};
	\node at (0, 1.7) (411) {$\bullet$};
	\node at (1, 1.7) (402) {$\bullet$};
	\node at (-0.5, 2.55) (510) {$\bullet$};
	\node at (0.5, 2.55) (501) {$\bullet$};
	\node at (0, 3.4) (600) {$\bullet$};
	\node at (0, -2) {$b = 0$};
	\node[rotate=60] at (-2, .85) {$a_1 = 0$};
	\node[rotate=-60] at (2, 0.85) {$a_2 = 0$};
	\draw[->](051)--(060);\draw[->](042)--(051);\draw[->](033)--(042);\draw[->](033)--(024);
	\draw[->](024)--(015);\draw[->](015)--(006);\draw[->](141)--(150);\draw[->](132)--(141);
	\draw[->](123)--(114);\draw[->](114)--(105);\draw[->](231)--(240);\draw[->](222)--(231);
	\draw[->](222)--(213);\draw[->](213)--(204);\draw[->](321)--(330);\draw[->](312)--(303);
	\draw[->](411)--(420);\draw[->](411)--(402);\draw[->](510)--(600);\draw[->](420)--(510);
	\draw[->](330)--(420);\draw[->](330)--(240);\draw[->](240)--(150);\draw[->](150)--(060);
	\draw[->](411)--(501);\draw[->](321)--(411);\draw[->](231)--(141);\draw[->](141)--(051);
	\draw[->](312)--(402);\draw[->](222)--(312);\draw[->](222)--(132);\draw[->](132)--(042);
	\draw[->](213)--(303);\draw[->](123)--(033);\draw[->](114)--(204);\draw[->](114)--(024);
	\draw[->](105)--(006);\draw[->](204)--(105);\draw[->](303)--(204);\draw[->](303)--(402);
	\draw[->](402)--(501);\draw[->](501)--(600);\draw[->](114)--(015);\draw[->](213)--(114);
	\draw[->](312)--(411);\draw[->](411)--(510);\draw[->](123)--(024);\draw[->](222)--(123);
	\draw[->](222)--(321);\draw[->](321)--(420);\draw[->](132)--(033);\draw[->](231)--(330);
	\draw[->](141)--(042);\draw[->](141)--(240);
\end{tikzpicture}
\caption{The lattice of arrangements $(a_1,b,a_2,0)$ with $k=12$ and $a_1+b,a_2+b$ even. An arrow $\alpha\to\beta$ denotes $\alpha$ has fewer or equal splitters than $\beta$. The proof of Theorem~\ref{min2} uses the fact that one can follow the arrows from the centroid to any other point using two straight-line steps.\label{fig:lattice}}
\end{figure}

While we kept it out of the statement for cleanliness, we record here that the proof of Theorem~\ref{min2} may be used to classify all minimum arrangements. First observe that Lemma~\ref{swap lemma} implies there are two additional minimum arrangements in the cases when $k\equiv1,2\pmod{3}$. That these are exhaustive may be seen from the strict inequality obtained in the end of the proof of Lemma~\ref{2 point lemma}.

Before leaving the case when there are $n=2$ sets, we mention an approximate formula for the value of $\splitters(a_1,b,a_2,d)$. Recall that the de~Moivre--Laplace Theorem \cite[Section~7.3]{feller} gives the approximation
\[\binom{n}{k} \approx 2^n \sqrt{\frac{2}{\pi n}} e^{-\frac{2}{n} \left(k-\frac{n}{2}\right)^2}.
\]
Thus when $a_1 \equiv b \equiv a_2\pmod{2}$ we can approximate the number of splitters by
\begin{align*}
	\splitters(a_1, b, a_2,d) &= 2^{d}\sum_{i = 0}^b \binom{b}{i}\binom{a_1}{t_1 - i}\binom{a_2}{t_2 - i} \\
	&\approx 2^{a_1 + b + a_2+d} \sqrt{\frac{8}{\pi^3 a_1 b a_2}}\int_{-\infty}^\infty e^{-\frac{2}{b}\left(x - \frac{b}{2}\right)^2 - \frac{2}{a_1}\left(\frac{b}{2} - x\right)^2 - \frac{2}{a_2}\left(\frac{b}{2} - x\right)^2}dx \\
	&= \frac{2^{k + 1}}{\pi \sqrt{a_1 a_2 + a_1 b + a_2 b}}.
\end{align*}
It is not difficult to calculate that the latter expression has its minimum when $a_1=b=a_2=k/3$ and $d=0$. However without very tight control over the error in the approximation, it would not be possible to use this information to replace the proof of the previous theorem.

We close this section by briefly considering the case when there are $n=3$ sets. The formula for the number of splitters when $n=3$ is considerably more complex than the $n=2$ case. Using an exhaustive search elaborated in Appendix~\ref{appendix:3-set}, we found that if $N_k$ denotes the minimum number of splitters of a splittable $3$-set family, then $N_k$ appears to obey $N_6=4$ and
\[\frac{N_{k+1}}{N_k} = 
  \left\{
  	\begin{array}{l l}
  		2 - \frac{1}{\floor{k/6} + 1}, & k \text{ even} \\
  		2, & k \text{ odd}
  	\end{array}
  \right.
\]
It is not difficult to show that the recurrence above implies that $N_k$ is asymptotic to $2^k/k^{3/2}$. Together with the known asymptotics for the case $n=2$, this supports the following.

\begin{conjecture}
	The minimum number of splitters of a splittable $n$-set family is asymptotic to
  \[2^k/k^{n/2} \; .
  \]
\end{conjecture}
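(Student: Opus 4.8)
The plan is to translate the counting problem into a multidimensional local limit statement. Fix a splittable family $\mathcal B=\{B_1,\dots,B_n\}$ on $[k]$ and choose $A\subseteq[k]$ uniformly at random. Writing $Y_i=|A\cap B_i|$ and $\vec Y=(Y_1,\dots,Y_n)$, we have $\vec Y=\sum_{j=1}^k a_j\bm v_j$, where the $a_j\in\{0,1\}$ are independent fair bits and $\bm v_j\in\{0,1\}^n$ records which $B_i$ contain $j$. We may assume all $|B_i|$ are even: for the minimum this follows from parity comparisons in the spirit of Lemma~\ref{no_odd} (odd-sized sets only increase the splitter count), and for the general lower bound the estimates below apply verbatim at any target within $O(1)$ of the mean. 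In the even case, the number of splitters of $\mathcal B$ is exactly $2^k\Pr[\vec Y=\vec t_0]$ with $\vec t_0=(|B_1|/2,\dots,|B_n|/2)=\mathbb E\vec Y$. The key second-moment datum is the $n\times n$ matrix $M_{ij}=|B_i\cap B_j|$: it is the Gram matrix $M=\sum_j\bm v_j\bm v_j^{\mathsf T}$, hence positive semidefinite, and the covariance of $\vec Y$ equals $\tfrac14 M$. In particular Hadamard's inequality for positive semidefinite matrices gives $\det M\le\prod_i|B_i|\le k^n$. Thus everything reduces to a local central limit theorem applied with uniform control over all families.

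For the upper bound on the minimum it suffices to exhibit one $n$-set family with $O(2^k/k^{n/2})$ splitters. Take the \emph{balanced family}: partition $[k]$ into $2^n-1$ blocks $P_I$, one per nonempty $I\subseteq[n]$, of nearly equal size $\approx k/(2^n-1)$ (with tiny adjustments so that each $|B_i|=\sum_{I\ni i}|P_I|$ is even), and set $B_i=\bigcup_{I\ni i}P_I$. Its splitter count is the lattice sum $\sum_{(m_I)}\prod_I\binom{|P_I|}{m_I}$ over integer profiles $(m_I)$ with $\sum_{I\ni i}m_I=|B_i|/2$, a sum over a $(2^n-1-n)$-dimensional lattice. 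A routine application of Laplace's method and Stirling's formula shows it is governed by the $\Theta(k^{(2^n-1-n)/2})$ profiles near the central profile $m_I\approx|P_I|/2$, each of size $\Theta(2^k/k^{(2^n-1)/2})$, for a total of $\Theta(2^k/k^{n/2})$. This direction is standard once the bookkeeping is in place.

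For the lower bound one must show $\Pr[\vec Y=\vec t_0]\gtrsim k^{-n/2}$ for every splittable family. First dispose of degeneracies: if $\operatorname{rank}M=r<n$ then $\sum_i u_i\mathbf 1_{B_i}=0$ for a nonzero integer vector $u$, the splitting constraints are dependent, and the problem reduces --- after projecting onto $\operatorname{span}\{\bm v_j\}$, or by induction on $n$ using the known cases --- to one with $r$ independent constraints, giving $\Pr[\vec Y=\vec t_0]\gtrsim k^{-r/2}\ge k^{-n/2}$; the same reduction handles any direction in which a moment of $\vec Y$ fails to grow with $k$. In the remaining non-degenerate regime every eigenvalue of $M$ is $\Theta(k)$ and a genuine local CLT is required. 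One route is Fourier-analytic: $\Pr[\vec Y=\vec t_0]=(2\pi)^{-n}\int_{[-\pi,\pi]^n}\prod_j\cos(\tfrac12\bm\theta\cdot\bm v_j)\,d\bm\theta$, and since $\prod_j\cos(\tfrac12\bm\theta\cdot\bm v_j)\approx e^{-\frac18\bm\theta^{\mathsf T}M\bm\theta}$ near the origin one expects $\Pr[\vec Y=\vec t_0]\approx 2^n(2\pi)^{-n/2}(\det M)^{-1/2}\ge 2^n(2\pi)^{-n/2}k^{-n/2}$, provided the contribution of $\|\bm\theta\|\ge\varepsilon$ is shown to be negligible. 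An alternative is a moment-plus-unimodality argument: $\operatorname{Var}(Y_i)=|B_i|/4\le k/4$, so a second-moment bound confines at least half the mass of $\vec Y$ to a box of $O(k^{n/2})$ lattice points, and the symmetry $A\leftrightarrow A^{\mathrm c}$ makes $\vec t_0$ a critical point of $\vec t\mapsto\Pr[\vec Y=\vec t]$, so a log-concavity or unimodality property of this pmf would force the central value to carry an $\Omega(k^{-n/2})$ fraction of that mass.

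The main obstacle is making the last step uniform: proving a local CLT whose error term beats the main term for \emph{every} non-degenerate family, or else establishing the unimodality needed in the alternative route --- the generating polynomial $\prod_I(1+\prod_{i\in I}z_i)^{|P_I|}$ has ``gappy'' factors for $|I|\ge 2$, so its pmf is not obviously Lorentzian for $n\ge 3$. Reassuringly, the difficulty lies where it costs least: near-degenerate or highly skewed families are provably far from extremal and need only crude bounds, while near the extremal balanced configuration $\det M=\Theta(k^n)$ and the explicit binomial-sum asymptotics already suffice. Finally, the same analysis with sharp constants should upgrade $\Theta(2^k/k^{n/2})$ to an exact asymptotic $c_n\,2^k/k^{n/2}$, where $c_n$ comes from the $D$-optimal design on $\{0,1\}^n\setminus\{\bm 0\}$; since $\log\det M$ is concave in the Venn-region sizes and invariant under $S_n$, the optimal design is symmetric (its weights depend only on $|I|$), reducing the computation of $c_n$ to an $n$-parameter optimization --- consistent with the equal sizes $a_1=b=a_2$ in the $n=2$ minimizer.
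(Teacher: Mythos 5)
The statement you are addressing is labeled a \emph{conjecture} in the paper; the authors offer no proof, only computational evidence (an exhaustive search for $k\le 60$ producing the conjectural recurrence for $N_k$ when $n=3$) together with the proven cases $n=1,2$. There is therefore no paper proof to compare against, and what you have written is, candidly, a research program rather than a proof.

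As a program it contains a genuine gap, which you flag yourself. The upper bound via the balanced family is fine in principle: a saddle-point estimate of the lattice sum over profiles $(m_I)$ gives $\Theta(2^k/k^{n/2})$. But the lower bound --- that \emph{every} splittable $n$-set family has $\Omega(2^k/k^{n/2})$ splitters --- requires a multivariate local CLT whose error term is uniform over all families, or else an unproven unimodality/log-concavity of the joint pmf of $(|A\cap B_1|,\dots,|A\cap B_n|)$; neither is established. Worse, the paper's own evidence cuts against the heuristic behind your choice of test family: the authors remark explicitly that the explanation for $2^k/k^{3/2}$ ``doesn't seem to be the same as the explanation for the cases $n=1,2$,'' and their computational minimizers (Figure~\ref{fig:3-set}) look nothing like your balanced family --- they pack essentially all of $[k]$ into the three pairwise-intersection Venn regions and leave only $O(1)$ points elsewhere, sitting right at the boundary of unsplittability. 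That is precisely the regime where a naive Gaussian local limit can break: the characteristic function $\prod_j\cos(\tfrac12\bm{\theta}\cdot\bm{v}_j)$ can carry substantial mass away from the origin when the vectors $\bm{v}_j$ are concentrated on few values, which is exactly what the observed minimizers do. Your Hadamard bound $\det M\le k^n$ controls only the Gaussian main term, not the tail of the Fourier integral, and the tail is where the uniformity must be earned. Until that step is supplied --- or the extremal families are characterized structurally, as the authors begin to do computationally --- the conjecture remains open.
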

  
\section{The splitting game}

In this section we again look for arrangements that are in some sense the least splittable. This time, rather than considering the number of splitters an arrangement has, we instead consider whether the arrangement can be split under adversarial conditions. To do this, we adopt a game theoretic perspective on the notion of set splitting.

The \emph{splitting game} is played by players Split and Skew on a game board $(k,\mathcal B)$, where $k$ is a positive integer and $\mathcal B$ is a family of subsets of $[k]$. An instance $(k, \mathcal B, t)$ of the game consists of a game board together with $t \in \{\mathrm{Split},\mathrm{Skew}\}$ indicating which player goes first. The players alternately claim an element from $[k]$, without repetition, until all the elements have been claimed. Split wins the game if the set of elements Split claimed splits $\mathcal B$ (\emph{i.e.}, splits every $B\in\mathcal B$), and Skew wins otherwise.

The splitting game lies in the general class of games known as Maker--Breaker games. Introduced by Paul Erd\"os and John Selfridge, such games consist of two players choosing objects with Maker trying to occupy some winning arrangement and Breaker trying to prevent Maker's success \cite{beck-tic}. Since such games are finite and of perfect information, Zermelo's theorem implies that in each instance of the game, one of the two players must have a winning strategy.
  

It is clear that if Split has a winning strategy in $(k,\mathcal B, t)$, then $\mathcal B$ is splittable. However, the converse is not true. For example, consider a game on $k=3$ consisting of the sets $\mathcal B = \{\{1,2\}, \{2, 3\}\}$. Note that $\mathcal B$ is splittable with two splitters: $\{1, 3\}$ and $\{2\}$. Therefore, in the game $(3, \mathcal B, \text{Skew})$, if Skew first claims $2$, then Split can claim $1$ or $3$ but not both, resulting in a victory for Skew.

We begin with some general observations on the splitting game.

\begin{lemma}[Reduction lemma]
	\label{reduction lemma}
	Let $(k,\mathcal B),(k',\mathcal B')$ be game boards with $|\mathcal B| = |\mathcal B'|$, and suppose $|R| \equiv |R'|\pmod{2}$ for corresponding Venn regions $R,R'$ of $\mathcal B,\mathcal B'$. Then Player~$p$ has a winning strategy in $(k,\mathcal B,t)$ if and only if $p$ has a winning strategy in $(k',\mathcal B',t)$.
\end{lemma}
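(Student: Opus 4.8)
The plan is to exhibit an explicit pairing between the positions of the two games that preserves, move for move, everything the players need to know in order to play. Concretely, fix bijections between the Venn regions of $\mathcal B$ and those of $\mathcal B'$ so that corresponding regions $R,R'$ have $|R|\equiv|R'|\pmod 2$; I want to further refine this to an explicit injection/relation between the ground sets $[k]$ and $[k']$. The key observation is that what matters for the splitting condition is, for each set $B_i$, only the parity of the number of elements Split has claimed inside each Venn region (more precisely, once a region is exhausted, how Split's count there compares to its target); so two positions that agree region-by-region on these parities are ``equivalent'' in a sense strong enough that the same player wins from both.

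First I would set up the correspondence carefully. Within each pair of corresponding regions $R,R'$, pick an auxiliary bijection between as many elements of $R$ and $R'$ as possible; since $|R|\equiv|R'|\pmod 2$, the unmatched elements (all in the larger region) come in pairs. I would then define a strategy-stealing/mirroring argument: suppose Player $p$ has a winning strategy $\sigma$ in $(\mathcal B,t)$; I build a strategy $\sigma'$ for $p$ in $(\mathcal B',t)$. When it is $p$'s turn in $\mathcal B'$, consult $\sigma$ on the ``shadow'' position in $\mathcal B$; if $\sigma$ plays a matched element, play its partner in $\mathcal B'$; if $\sigma$ plays an unmatched (surplus) element of some region $R$, then play an unmatched surplus element of $R'$ if one is available, and otherwise play any unclaimed element of $R'$ — the point being that surplus elements come in pairs within a single region, so their effect on the region's parity count cancels and the ``which player is forced to take the last few'' bookkeeping works out. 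The opponent's moves in $\mathcal B'$ are translated back to $\mathcal B$ by the same dictionary. One has to check the dictionary stays well-defined (never asked to move in an exhausted region on one side while the other still has room) — this is where the parity hypothesis is used, and it is the step I expect to be the main obstacle.

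The heart of the verification is the invariant: after each full round, for every $i$, the parity of $|A_{\text{Split}}\cap B_i|$ is the same in the two games, \emph{and} the set of regions already exhausted is the same on both sides (with Split's surplus/deficit in each exhausted region matching modulo the relevant target). Granting this invariant up to the final position, Split's claimed set splits $\mathcal B$ iff it splits $\mathcal B'$: splitting $B_i$ is the condition $|A\cap B_i|\in\{\lfloor|B_i|/2\rfloor,\lceil|B_i|/2\rceil\}$, and since the region-by-region contributions have matching parities and the regions partition each $B_i$, the total counts $|A\cap B_i|$ have the same parity, while $|B_i|\equiv|B_i'|\pmod 2$ by summing the region parities — and for a single set, having the right parity relative to $|B_i|$ is \emph{equivalent} to splitting it. Hence $p$'s strategy $\sigma'$ wins, giving one direction; the other direction is identical by symmetry of the construction (the dictionary is reversible).

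\textbf{Main obstacle.} The genuinely delicate point is handling the surplus elements of the larger region in a pair so that the invariant survives, especially near the end of the game when a region on one side fills up before the nominally corresponding region on the other. I would handle this by insisting the auxiliary matching within each region pair is chosen so that unmatched elements are always consumed last (i.e.\ treat matched elements as higher priority), and by noting that because unmatched elements occur in same-region pairs, whoever is forced to take the first of a leftover pair will also, after one more round, be in the analogous position on the other board; a short parity count on the number of still-unclaimed elements finishes it. I do not expect any nontrivial calculation — the whole argument is combinatorial bookkeeping — but writing the invariant precisely enough that the endgame case is visibly covered will take some care.
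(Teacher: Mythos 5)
Your high-level idea — match elements across corresponding regions and observe that the surplus in each region comes in even-sized bundles — is on the right track, and it captures the two facts that actually drive the lemma. But your dictionary, as stated, is not well-defined, and it is exactly the point you flag as the ``main obstacle.'' If the surplus sits in $\mathcal B$, then $\sigma$ can legitimately direct a move to a surplus element of some region $R$ that has no partner in $\mathcal B'$; your fallback (``play any unclaimed element of $R'$'') desynchronizes the shadow, because the element of $R'$ you actually claim is the partner of some \emph{matched} element of $R$ that $\sigma$ still believes is available and may later tell you to take. If instead the surplus sits in $\mathcal B'$, the real game outlasts the shadow and you must make moves on which $\sigma$ offers no advice. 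Your proposed fix — insist surplus elements are ``consumed last'' — is a move-ordering rule you cannot impose on the opponent. To make the all-at-once dictionary work you would need to introduce an explicit side game on each surplus pair (respond instantly to the opponent's surplus move by taking its partner, advance the shadow by imaginary pairs when $\sigma$ calls for a surplus move, and verify the interleaving respects turn order), which is precisely the bookkeeping you hoped to avoid.

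The paper sidesteps all of this by reducing to the atomic case: it suffices to treat $\mathcal B'$ obtained from $\mathcal B$ by adding a single pair $\{k+1,k+2\}$ to one Venn region, the general case following by chaining (e.g.\ through the minimal board with at most one point per region). In that setting the lift of $\sigma$ is a one-line pairing response: if the opponent ever claims one of $k+1,k+2$, immediately claim the other; otherwise follow $\sigma$; if forced to move when only the pair remains, take either. This ensures Player~$p$ ends with exactly one of the new pair, so the restriction of $p$'s claim to $[k]$ is a genuine run of $\sigma$ and hence a win. Note also that the converse direction is obtained by Zermelo's theorem, not by reversing the dictionary — the construction is genuinely asymmetric (it lifts from the smaller board to the larger one), and the determinacy step is what makes the ``iff'' free. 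You should adopt this reduction; it renders the side-game bookkeeping you worry about entirely trivial.
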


\begin{proof}
  It is enough to consider the case when $\mathcal B'$ is formed by adding a pair of points to some Venn region of $\mathcal B$. One can then use this case repeatedly, adding or removing pairs of points, to obtain the full statement. So assume that $k'=k+2$, and that $\mathcal B'$ is formed by adding the points $k+1,k+2$ to some subset of the sets in $\mathcal B$.
  
	Let $\sigma$ be a winning strategy for Player~$p$ in $(k,\mathcal B,t)$ and let Player~$q$ be the other player. We define a strategy for $p$ in $(k',\mathcal B',t)$ as follows. If $q$ claims one of $k+1$ or $k+2$, then $p$ claims the other immediately. Otherwise $p$ plays according to $\sigma$. If every element of $[k]$ is claimed before this happens, and it is $p$'s turn to move, then $p$ takes either one, and $q$ is forced to take the other.
	
	Let $A$ be the set of elements claimed by Player~$p$ after a run where $p$ played according to this strategy. Since $p$ followed $\sigma$, considering only turns involving $[k]$, $A$ splits or skews $\mathcal B$ as desired by $p$. Moreover since $A$ contains exactly one of $k+1$ and $k+2$, it follows that $A$ remains a splitter or a skewer, as the case may be, of $\mathcal B'$. In any case, $p$ wins $(k',\mathcal B',t)$. 
	
	Conversely, if Player~$p$ does not have a winning strategy in $(k,\mathcal B,t)$, then by Zermelo's Theorem, $q$ must have a winning strategy. By the forward implication, $q$ then has a winning strategy in $(k',\mathcal B',t)$. Thus $p$ does not have winning strategy in $(k',\mathcal B',t)$ if $p$ does not have a winning strategy in $(k,\mathcal B,t)$.
	%
\end{proof}

For the next result, if $\mathcal B$ is a family of subsets of $[k]$, we say that Split has a \emph{pairing strategy} for $\mathcal B$ if there exists a set of disjoint pairs $\mathcal P = \{P_1, \ldots, P_r\}$ of elements of $[k]$ such that every splitter of $\mathcal P$ splits $\mathcal B$. Similarly we say that Skew has a pairing strategy if every splitter of $\mathcal P$ does not split $\mathcal B$; that is, if $\mathcal P \cup \mathcal B$ is unsplittable.

\begin{lemma}[Pairing lemma]
  \label{pairing lemma}
  Let $\mathcal B$ be a family of subsets of $[k]$. If Player~$p$ has a pairing strategy for $\mathcal B$, then $p$ has a winning strategy for $(\ell,\mathcal B,t)$ for either value of $t$ and for each $\ell \geq k$.
\end{lemma}

\begin{proof}
	Suppose Split has a pairing strategy given by $\mathcal P=\{P_1, \ldots, P_r\}$, and consider the side game $(\ell,\mathcal P,t)$. By Lemma~\ref{reduction lemma}, Split has a winning strategy for $(\ell,\mathcal P,t)$ if and only if Split has a winning strategy in a game in which all sets are empty, which is clearly the case. Since any set that splits $\mathcal P$ also splits $\mathcal B$, this yields a winning strategy for $(\ell,\mathcal B,t)$.
	
	If Skew has a pairing strategy, we can use the same argument, but note that Skew will play the role of Split in the side game.
\end{proof}

In the rest of the section, we carry out case studies, analyzing which player wins in several special cases of the splitting game. We begin with a very trivial such case.

\begin{theorem}
	\label{even regions}
	Suppose that $(k,\mathcal B)$ has at most two odd-sized Venn regions of nonzero multiplicity. Then for either value of $t$, Split has a winning strategy in $(k,\mathcal B, t)$.
\end{theorem}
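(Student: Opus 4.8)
The plan is to reduce the board to a triviality using Lemma~\ref{reduction lemma}, then hand Split an explicit pairing strategy via Lemma~\ref{pairing lemma}.

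First I would shrink every Venn region of $\mathcal B$ down to size $0$ or $1$ according to its parity. Precisely, let $\mathcal B'$ be a game board with $|\mathcal B'|=|\mathcal B|$ such that for every $I\subseteq\{1,\dots,|\mathcal B|\}$, the Venn region of $\mathcal B'$ corresponding to $I$ has size $0$ or $1$, congruent mod $2$ to the size of the Venn region of $\mathcal B$ corresponding to $I$. Then Lemma~\ref{reduction lemma} applies and tells us Split has a winning strategy in $(\mathcal B,t)$ if and only if she has one in $(\mathcal B',t)$. By hypothesis at most two Venn regions of $\mathcal B$ of nonzero multiplicity are odd-sized, so in $\mathcal B'$ the union $\bigcup\mathcal B'$ has at most two points, say $x$ and $y$ (possibly fewer). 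Hence every member of $\mathcal B'$ equals one of $\emptyset$, $\{x\}$, $\{y\}$, or $\{x,y\}$.

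Next I would produce a pairing strategy for Split on $\mathcal B'$. If $\abs{\bigcup\mathcal B'}\leq1$, take $\mathcal P=\emptyset$: every set splits $\emptyset$ and every set splits a singleton, so vacuously every set meeting the pairing requirement splits $\mathcal B'$. If $\bigcup\mathcal B'=\{x,y\}$, take $\mathcal P=\{\{x,y\}\}$: a set containing exactly one of $x,y$ meets $\{x,y\}$ in a single point and so splits it, while splitting $\emptyset$, $\{x\}$, and $\{y\}$ automatically. Either way $\mathcal P$ witnesses a pairing strategy for Split, so by Lemma~\ref{pairing lemma} Split wins $(\mathcal B',t)$ for both values of $t$; by the reduction she also wins $(\mathcal B,t)$.

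I do not expect a genuine obstacle: this is the ``very trivial class of games'' promised before the statement. The only care needed is the bookkeeping --- verifying that the shrinkage meets the hypotheses of Lemma~\ref{reduction lemma} (equal number of sets, and matching parities on \emph{every} Venn region including the outside one), and handling the degenerate cases with zero or one odd-sized Venn region of nonzero multiplicity. One could instead build the pairing directly on $\mathcal B$, pairing points within each even region and pairing the two surplus points of the two odd regions with each other, but routing through Lemma~\ref{reduction lemma} is cleaner.
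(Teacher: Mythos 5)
Your proof is correct and takes essentially the same route as the paper: reduce via Lemma~\ref{reduction lemma} so that all Venn regions have size $0$ or $1$, note that the hypothesis forces $\bigcup\mathcal B'$ to have at most two points, and conclude. The paper dispatches the final step in one clause (``it is clear that Split wins any run''), which is actually a bit loose --- Split does need to play sensibly, e.g.\ never taking both points of the one remaining pair --- and your explicit appeal to the Pairing Lemma with $\mathcal P=\emptyset$ or $\mathcal P=\{\{x,y\}\}$ tidies that up, as does your care in distinguishing the (irrelevant) multiplicity-$0$ Venn region.
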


\begin{proof}
  By Lemma~\ref{reduction lemma}, the game board $(k,\mathcal B)$ is equivalent to a game board $(\ell, \mathcal C)$ in which 
$\left|\bigcup_{C \in \mathcal C} C\right| \leq 2.$
If $\left|\bigcup_{C \in \mathcal C} C\right| \leq 1$, then every subset of $[\ell]$ splits $\mathcal C$, so any run of the splitting game results in a Split win. If $\left|\bigcup_{C \in \mathcal C} C\right| = 2$, then Split has a pairing strategy given by $\left\{\bigcup_{C \in \mathcal C} C\right\}$.
\end{proof}

Next we proceed to study games where the number of sets $n=|\mathcal B|$ is small. By Lemma~\ref{reduction lemma}, one can reduce any game to one in which each Venn region has at most one element. Thus for fixed $n$ there are just finitely many $n$-set game boards after reduction, and for small $n$ we can exhaustively check who has a winning strategy in each case.

\begin{theorem}
	For all games with $|\mathcal B|\leq 3$, the winning player and corresponding winning strategy are known and identified in Appendix~\ref{appendix:two-three}.
\end{theorem}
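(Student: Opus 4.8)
The statement is really the assertion that a finite verification has been carried out, so a proof proposal amounts to explaining how the problem is reduced to a finite check and how each resulting case is resolved. The plan is as follows. By the Reduction lemma (Lemma~\ref{reduction lemma}), the winner of an instance $(\mathcal B,t)$ depends only on $|\mathcal B|$, the parity of each Venn region of $\mathcal B$, and $t$. Hence it suffices to analyze \emph{reduced} game boards, those in which every Venn region has at most one element; for a fixed $n=|\mathcal B|$ there are at most $2^{2^n}$ such boards, hence finitely many when $n\le 3$. Note also that for a reduced board the parameter $k$ equals the number of odd-sized Venn regions, so $k\le 2^n\le 8$ and the game tree is tiny.

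Next I would cut the list down using symmetry: permuting the $n$ sets and replacing any set by its complement does not change who wins (a permutation merely relabels $[k]$, and complementing $B_i$ relabels what it means to split $B_i$). These operations generate a group of order $2^n n!$ acting on the $2^{2^n}$ parity patterns, and for $n\le 3$ a short Burnside-type count leaves only a handful of orbit representatives; together with the two choices of $t$ this produces an explicit short list of instances to be settled. For $n\le 2$ the list is very small, and there are no unsplittable families; for $n=3$ it is still small but now includes unsplittable boards.

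Then each representative is settled by one of the following mechanisms, certified by an object one can record in the appendix: (i) if the board is unsplittable — possible only when $n=3$ — then every set skews $\mathcal B$, so Skew wins for either $t$; (ii) if $\mathcal B$ has at most two odd-sized Venn regions of nonzero multiplicity, Split wins for either $t$ by Theorem~\ref{even regions}; (iii) if Split or Skew has a pairing strategy in the sense of the Pairing lemma (Lemma~\ref{pairing lemma}), that player wins for either $t$, and the certificate is simply the system of pairs; (iv) for any board not covered by (i)--(iii), one performs an explicit depth-$\le 8$ game-tree search on the reduced board and records the optimal strategy in closed form. In practice one expects the structural results (i)--(iii) to dispose of essentially all cases — for instance the splittable-but-Skew-wins board $\mathcal B=\{\{1,2\},\{2,3\}\}$ on $k=3$ is handled by the pairing $\{\{1,3\}\}$ for Skew — so that step (iv) is used only for a few degenerate or borderline instances.

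The main obstacle is not any single deep computation but the bookkeeping around steps (iii) and (iv): for each orbit in which a player wins, one must actually \emph{produce} a winning certificate (a pairing, an application of Theorem~\ref{even regions}, an unsplittability argument, or an explicit strategy) rather than merely invoke Zermelo's theorem, and then verify it against all opponent replies; and one must confirm that the chosen orbit representatives genuinely exhaust all reduced boards up to the symmetry group. The borderline cases — boards that are splittable yet admit no pairing strategy for either player, so that the order of play matters and the answer may depend on $t$ — are where the care is needed; identifying the correct first move (for Skew, typically a move into a suitably chosen odd region; for Split, typically a move that collapses the board to a trivial one to which the Reduction and Pairing lemmas apply) and checking its correctness is routine but must be done exhaustively. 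The complete case analysis is carried out in Appendix~\ref{appendix:two-three}.
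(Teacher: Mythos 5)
Your overall plan is essentially the paper's: reduce via Lemma~\ref{reduction lemma} to boards with at most one point per Venn region, note that for $n\le 3$ there are only finitely many such boards, and settle each one by a recorded certificate (triviality as in Theorem~\ref{even regions}, a pairing strategy as in Lemma~\ref{pairing lemma}, or unsplittability). Appendix~\ref{appendix:two-three} is exactly such a catalogue, and it turns out that every case with $|\mathcal B|\le 3$ is won either trivially by Split or via a pairing strategy, so your fallback step (iv) of a game-tree search is never needed and the winner never depends on $t$ in this range.

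However, your symmetry reduction contains a genuine error: replacing a set of the board by its complement does \emph{not} preserve the winner. The valid complementation symmetry in this subject is complementation of the \emph{splitter} ($A$ splits $B$ if and only if $[k]\setminus A$ splits $B$), not complementation of a set being split: $A$ splits $[k]\setminus B$ is equivalent to $A$ splits $B$ only when $|A|$ is exactly $k/2$, and in the game Split's final set has size $\lceil k/2\rceil$ or $\lfloor k/2\rfloor$, so the equivalence fails for odd $k$ and odd regions. Concretely, $\mathcal B=\{\{1,2\},\{2,3\}\}$ on $[3]$ is won by Skew for either $t$ via the pairing $\{1,3\}$ (the unique Skew-win two-set type in the appendix), whereas complementing the first set gives $\mathcal B'=\{\{3\},\{2,3\}\}$ on $[3]$, which Split wins for either $t$: the singleton $\{3\}$ is split by every set, and Split can always end with exactly one of $2,3$ (going first, take $1$ and then whichever of $2,3$ Skew leaves; going second, answer Skew's move so as to claim exactly one of $2,3$). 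Hence the group of order $2^n n!$ you propose to quotient by does not act on the answers, and settling one representative per orbit would not settle all boards; the paper's counts (for $n=3$: $128$ arrangements falling into $40$ types, with $65$ Split wins and $63$ Skew wins) correspond to quotienting only by permutations of the $n$ sets. If you restrict your symmetry group to set permutations (order $n!$), the rest of your argument goes through and coincides with the paper's.
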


This case study gives some insight into the question of which player wins more often. Let $P(n)$ be the proportion of $n$-set boards $(k, \mathcal B)$ (with at most one element in each Venn region) for which Split has a winning strategy for $(k, \mathcal B, \text{Split})$. From our data, we have $P(1)=1$, $P(2)=7/8$, and $P(3)=65/128$. Moreover, using the fact that games won by Skew are closed under adding sets, it is not difficult to show that $P(n)$ is a decreasing function of $n$, and in fact, we have the following result.

\begin{theorem}
	With $P(n)$ as described above, $P(n) \to 0$ as $n \to \infty$.
\end{theorem}

\begin{proof}
	Let $N \in \{0,1\}^{2^n}$ be chosen uniformly at random. Construct a game board $(k,\mathcal B)$ on $n$ sets from $N$ by identifying each of the $2^n$ Venn regions of $\mathcal B$ with a coordinate of $N$, and placing 0 or 1 points in each region, according to the corresponding coordinate of $N$. Then $P(n)$ is precisely the probability that Split has a winnning strategy for $(k, \mathcal B, \text{Split})$.
	
	Our strategy is to find subfamilies of $\mathcal B$ of size 2, with high probability, for which Skew has a winning strategy. Let $t = \lfloor n/2 \rfloor$, and let $\mathcal B_1, \ldots, \mathcal B_t$ be disjoint subfamilies of $\mathcal B$ on 2 sets. Let $S_i$ be the event that Split has a winning strategy for $(k,\mathcal B_i,\text{Split})$. Now consider a single Venn region $R$ of $\mathcal B_i$, and let $X$ be a random variable denoting $|R|$. Then $X$ is binomially distributed with $p=1/2$, so we have $\text{Pr}(X \text{ is even}) = \text{Pr}(X \text{ is odd}) = 1/2$. It follows from Lemma~\ref{reduction lemma} and Appendix~\ref{appendix:two-three} that $\text{Pr}(S_i) = P(2) = 7/8$.
	
	We claim that $S_1, \ldots, S_t$ are mutually independent. It suffices to show that for any nonempty subset $\{i_1, \ldots, i_s\}$ of $[t-1]$, we have $\text{Pr}(S_t \mid S_{i_1} \cap \dots \cap S_{i_s}) = \text{Pr}(S_t)$. Now recall that we are assigning 0 or 1 points to each Venn region of the total configuration. Imagine that we assign the regions contained in sets of $\mathcal B_1 \ldots \mathcal B_{t-1}$ first, thus determining the occurrence of  $S_{i_1} \cap \dots \cap S_{i_s}$. Note that each of the three interior Venn regions of $B_t$ has at least one Venn region of $\mathcal B$ unassigned. Therefore, the number of additional points assigned will follow a nontrivial binomial distribution, so the probability that an even (or odd) number of additional points will be assigned is $1/2$. Therefore, the initial assignment has no bearing on the probability of $S_t$, and we have mutual independence.
	
	Now, since $S_1 \cap \dots \cap S_t$ must occur in order for Split to have a winning strategy for $(k,\mathcal B,\text{Split})$, we have
	\[P(n) \leq \text{Pr}(S_1 \cap \dots \cap S_t) = (7/8)^t \to 0 \text{ as $n \to \infty$.}
  \]
  This completes the proof.
\end{proof}

It is also worth remarking that while we were able to give a pairing strategy for every $3$-set game, there is not always a pairing strategy. For an example with $n=6$, see the first statement of Theorem~\ref{thm:tictactoe} below. In this case we can show that Player~II has a winning strategy, so Lemma~\ref{pairing lemma} implies there cannot be a pairing strategy. For an example with $n=5$, we can also show Player~II has a winning strategy in the game on $\mathbb Z/5\mathbb Z$ whose board is described by $B_i=\{i,i+1,i+2\}$, $i = 1,\ldots, 5$. We do not know whether there is such an example using just four sets.


As there is significant literature on maker--breaker tic-tac-toe and its variants, we next study splitting tic-tac-toe. Here given $m,n$ we let $k=mn$ and think of $[k]$ as an $m\times n$ grid. We then let $\mathcal B$ consist of the rows and columns of the grid. Observe that $\mathcal B$ is splittable by a checkerboard pattern. For comparison, in $3\times3$ maker--breaker tic-tac-toe, Maker tries to make a line and Breaker tries to prevent Maker from making a line. In $3\times3$ splitting tic-tac-toe, Skew tries to make a line or cause Split to make a line.

Because of the special status of the $3\times3$ board, we first consider the ``classic'' variant of $3\times3$ splitting tic-tac-toe where $\mathcal B$ includes the diagonals as well as the rows and columns.

\begin{proposition}
	In ``classic'' $3\times3$ splitting tic-tac-toe, Skew has a winning strategy.
\end{proposition}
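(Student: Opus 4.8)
The plan is to exhibit an explicit winning strategy for Skew in the classic $3\times 3$ game, checking it against all of Split's replies. Label the cells $1,\ldots,9$ with $5$ the center; the board $\mathcal B$ consists of the three rows, three columns, and two diagonals, eight sets of size $3$. A splitter $A$ must contain exactly $1$ or $2$ cells of each line. First I would record the elementary observation that Skew wins if, at the end of play, \emph{either} Skew's own cells contain a full line \emph{or} Split's cells contain a full line (equivalently, some line has all three cells of one color): in either case that line is not split. So Skew's goal is the disjunction ``I make a line, or I force you to make a line'' — strictly easier than the Maker goal in ordinary tic-tac-toe, where Skew would need to make a line outright.

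**Main steps.** The core is a short case analysis on who moves first. (i) If Skew moves first, I would have Skew take the center $5$. Ordinary $3\times3$ Maker--Breaker tic-tac-toe with Maker moving first from the center is a Maker win, and Skew here is in an even stronger position (the ``or you make a line'' clause only helps), so Skew wins; one either cites the classical pairing/strategy-stealing-free analysis of tic-tac-toe or just runs the (small, finite) game tree. (ii) If Split moves first, this is the interesting case. Whatever Split's opening cell $c$, I would have Skew respond with the center $5$ if $c\neq 5$, and with a corner (say $1$) if $c=5$. From here Skew plays the standard tic-tac-toe ``double-threat'' strategy: build toward two simultaneous lines so that Split can block at most one. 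The extra subtlety versus ordinary tic-tac-toe is that Split might try to \emph{avoid blocking} and instead accept a configuration where neither player completes a line — but then I would argue that, because there are eight lines covering all nine cells with each cell in $2$, $3$, or $4$ lines, and Split controls only $4$ of the $9$ cells (she moves on turns $1,3,5,7$ and gets the last move, so Split gets $5$ cells, Skew $4$ — wait, reconcile: $9$ cells, Split first means Split gets cells $5$, Skew gets $4$), a coloring of the board into $5$ Split-cells and $4$ Skew-cells with \emph{no} monochromatic line is essentially a proper ``split'' of all eight lines, and I would show Skew's forcing moves make such an outcome impossible — every strategy stealing route for Split runs into a double threat first.

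**The anticipated obstacle.** The genuinely delicate point is case (ii) with Split moving first and trying the ``pacifist'' defense of deliberately splitting every line rather than blocking Skew's threats; in ordinary tic-tac-toe Breaker has no incentive to do anything but block, so the classical analysis does not directly apply and I cannot simply quote it. I would handle this by direct enumeration of Skew's reply tree after the symmetry reduction (Split's first move is, up to the board's dihedral symmetry, a corner, an edge, or the center — three cases), showing in each that after three Skew moves Skew has created a fork (two lines each needing one more Skew cell, with disjoint completions) or else Split has been forced into a line of her own. This is a finite check of modest size, and I would present it compactly by giving Skew's strategy as a small decision table and remarking that it is routine (indeed machine-checkable, cf.\ the code in \cite{bryce}) to verify it defeats every Split response. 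Finally I would note that the classic game is a good illustration of the phenomenon that adding the diagonals genuinely helps Skew: the plain $3\times 3$ row-and-column game is treated separately below.
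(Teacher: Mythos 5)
Your reduction of the winning condition (Skew wins iff some line ends up monochromatic, in either colour) and your case split by who moves first both match the structure of the actual proof, and your treatment of the Skew-moves-first case is fine: Skew can simply follow the standard Maker strategy from the centre (the paper instead has Skew open in a corner and force Split onto the anti-diagonal, but your route is equally valid and easy to verify). The problem is the Split-moves-first case, which you yourself identify as the crux and then do not prove: you assert that after symmetry reduction a ``small decision table,'' left unstated, shows that after three Skew moves either Skew has a fork or Split has been forced into a line, and you defer the verification to an enumeration you never carry out. That is precisely the content of the proposition, so as written there is a genuine gap.

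Moreover, the shape you predict for the decision table is not right in the key subcase. If Split opens with the centre, every line through the centre is already dead for Skew, and the four boundary lines $\{1,2,3\},\{3,6,9\},\{7,8,9\},\{1,4,7\}$ admit the pairing $(2,3),(6,9),(7,8),(1,4)$, so a Split who simply answers each Skew move with its partner prevents Skew from ever completing a line, let alone creating a fork. Hence in this subcase the ``double-threat'' half of your dichotomy cannot occur against competent blocking, and the entire burden falls on the ``force Split into her own line'' half --- which is exactly the mechanism the paper uses, via the parity observation that Split, moving first, must also make the last move, so Skew can leave a single completing square of a Split line untouched and wait. (Note that the blind pairing above is not a counterexample to the proposition: it hands Split the plus-shaped set $\{2,4,5,6,8\}$, which contains the middle column, so Split must deviate, and the paper's explicit forcing lines show she cannot deviate safely.) To repair your write-up you would need to actually exhibit Skew's replies in the three symmetry classes of Split's opening (centre, edge, corner) and check each forcing sequence, as the paper does; citing machine-checkability is not a substitute for the check. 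The mid-paragraph confusion about whether Split holds four or five cells should also be cleaned up: Split moves first, takes five cells including the last one, and that last-move parity is not a side remark but the engine of the argument.
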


\begin{proof}
  If Skew is Player~I, begin by claiming the upper-left corner. Split is then forced to claim the middle, or else Skew can win traditional tic-tac-toe. Skew then claims the middle-top and middle-left, forcing Split to form a diagonal line.
  
  If Skew is Player~II, we consider the three cases depending on which element Split claims first. If Split claims the middle, then by claiming a corner, Skew can threaten to form three in a row no matter where Split goes next. Skew can take advantage of this by choosing one of the squares adjacent to its first move, forcing Split to take the final corner in that row or column. Thus, along this diagonal, either Split has already formed a full diagonal or there is one open square which Skew can force Split to claim as Split must make the last move.
  
  Next suppose Split claims the upper middle. Skew can then go in the middle and no matter what Split does next, Skew can occupy one of the two bottom corners. Thus, Skew threatens to form a diagonal so Split must prevent this by taking one of the two upper corners depending on where Skew went. Again, by avoiding the final spot in this row, Skew can force Split to take it, resulting in a full row for Split and a Skew win.
  
  Finally suppose Split claims the upper left. Skew again responds by selecting the middle. No matter where Split goes next, Skew can force Split to claim one of the two free squares adjacent to their first one. As before, Skew can simply avoid the third square in this row or column, eventually forcing Split to form three in a row.
  
  This covers all cases up to symmetry, so Skew has a winning strategy.
\end{proof}

We now return to the version of splitting tic-tac-toe without the diagonals.

\begin{theorem}
  \label{thm:tictactoe}
  Consider the $m\times n$ splitting tic-tac-toe game, with $m,n\neq1$.
  \begin{itemize}
    \item If $m=n=3$, Player~II has a winning strategy.
    \item Otherwise, Skew has a winning strategy.
  \end{itemize}
\end{theorem}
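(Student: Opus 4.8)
The plan is to handle the two bullets separately, with the $3\times 3$ case requiring the most care. I would organize the argument around the parity reductions of Lemma~\ref{reduction lemma}: only the parities of the Venn region sizes matter, and in the $m\times n$ grid each Venn region has size $0$ or $1$, so the game board is already reduced. For the generic case ($m,n \neq 1$ and not both equal to $3$), I would exhibit an explicit pairing strategy for Skew, appealing to the Pairing lemma (Lemma~\ref{pairing lemma}) to conclude Skew wins regardless of who moves first. The key observation is that a row of odd length $n$ is skewed by any set containing an even number of its elements, while a row of even length is skewed by any set containing an odd number; in either case one can try to partition $[k]$ into pairs, one per constraint, so that the "wrong" parity is forced. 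Concretely, if (say) $m \geq 4$, I would pair up two full rows at a time into $mn/2$ or $(m-1)n/2$ disjoint pairs, using that two parallel rows of the same length can be matched column-by-column, and verify that claiming one element from each pair always yields a set whose intersection with every row and every column has the forbidden cardinality. Some low cases ($2\times 2$, $2\times 3$, $2\times n$, $3\times n$ with $n\geq 4$, etc.) I would check by hand or by exhibiting the explicit pairing; the uniform construction is "pair the $i$-th column of row $2j-1$ with the $i$-th column of row $2j$," possibly leaving one leftover row when $m$ is odd, which is itself split into pairs along its length when $n$ is even, and when $n$ is odd handled by a small ad hoc argument using a leftover column.

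For the $3\times 3$ bullet, neither player wins as first mover — so I must prove that \emph{whoever moves second} wins, and that this holds for both choices of $t$. Since the board has $9$ cells, the second player makes $4$ moves and the first makes $5$; note $4$ is even and the rows/columns all have odd length $3$, so the relevant target for Split is to claim $1$ or $2$ cells in each line. I would split into two symmetric claims: (i) if Split moves first, Skew (Player~II) wins, and (ii) if Skew moves first, Split (Player~II) wins. For (i), I would adapt the "classic" proof above, dropping the diagonal threats: after Split's first move, by symmetry Split plays center, an edge, or a corner, and in each case I would describe Skew's responses showing Skew can force Split to occupy all three cells of some row or column (a $3$–$0$ split, which is not a split). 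For (ii), Split as the second player must \emph{prevent} every line from being monochromatic in her favor or in Skew's favor; here I would give Split an explicit pairing-style or case-based strategy — likely pairing the four edge cells into two opposite pairs $\{$top,bottom$\}$, $\{$left,right$\}$ and treating the center and corners adaptively — and check that Split can always answer Skew's moves so that no row or column ends up $3$–$0$ for either side.

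The main obstacle I expect is the $3\times 3$, second-player-wins-for-Split direction (case (ii)): unlike Skew's pairing strategies, Split here has no global pairing (as the paper itself notes for related $5$- and $6$-set boards, and $3\times 3$ has $6$ constraints), so I anticipate a genuine case analysis on Skew's opening and Split's forced responses, tracking which lines are still "live" and ensuring Split never gets cornered into completing a monochromatic line. A secondary nuisance is making the generic pairing construction genuinely uniform across the parities of $m$ and $n$; I would likely state it cleanly for $m$ even (pair rows two at a time) and then reduce the $m$ odd case to the $m-1$ even case by first disposing of the extra row (pairing it internally if $n$ is even, or absorbing it with one column if $n$ is odd), citing the Pairing lemma's stability under adding/removing points outside $\bigcup\mathcal B$ where convenient. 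The parity bookkeeping in verifying "one element per pair $\Rightarrow$ every line is skewed" is routine once the pairing is fixed, so I would not belabor it.
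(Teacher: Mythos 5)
Your generic-case argument has a genuine flaw. The ``key observation'' driving it is false: a line of odd length $n$ is split by any set meeting it in $(n\pm1)/2$ points, and these two values have opposite parities, so containing an even number of its cells does not skew it (e.g.\ $2$ out of $5$ is a split); likewise an even line of length $n$ is split by exactly $n/2$ cells, which may well be odd (e.g.\ $3$ out of $6$). Consequently your concrete pairing --- matching cell $(2j-1,i)$ with cell $(2j,i)$ --- is not a Skew pairing strategy: the checkerboard pattern picks exactly one cell from each vertically adjacent pair and splits every row and column, so it is a transversal of your pairing that splits $\mathcal B$; indeed any transversal of that pairing meets each even column in exactly $m/2$ cells, i.e.\ your pairing \emph{forces} those columns to be split. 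More broadly, you should not expect a global pairing strategy to exist in all the remaining cases: the paper only uses pairing-type arguments when $m,n$ are both even (by restricting to one row and one column, whose reduced two-set game is the unique Skew-won type) and when $m=2$ (the cyclic pairing $a_{2,j}\leftrightarrow a_{1,j+1}$); for even$\times$odd and odd$\times$odd boards it instead gives an adaptive strategy in which Skew ``gets ahead'' by two in some even line or by three in some odd line and then exhausts that line. So the heart of the second bullet is missing from your plan, not just its bookkeeping.

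For the $3\times3$ case your outline is structurally right --- Player~II wins, and both directions (Skew first implies Split wins, Split first implies Skew wins) require separate arguments, which is exactly how the paper proceeds, by explicit case analysis on the openings (using that the automorphism group of the diagonal-free board is transitive on cells, so the first move is without loss of generality). But you defer precisely this case analysis, and your proposed pairing-style idea for Split as second player cannot work as stated: the paper's Split strategy there is adaptive (respond to Skew's corner with the adjacent edge cell, then branch), not a pairing. As it stands, neither bullet of the theorem is actually established by the proposal.
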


The proof consists of a number of cases and is carried out in Appendix~\ref{appendix:tictactoe}.

We conclude with the generalization of splitting tic-tac-toe to higher dimensions. Specifically, we play on a $d$-dimensional grid and let $\mathcal B$ consist of the maximal axis-parallel lines.

\begin{theorem}
  Consider the $n_1\times\cdots\times n_d$ game described above. Assume that $n_i\neq1$ for all $i$ (this would reduce to a lower dimensional game).
  \begin{itemize}
    \item If $d=2$ and $n_1=n_2=3$, then Player~II has a winning strategy.
    \item In all other cases, Skew has a winning strategy.
  \end{itemize}
\end{theorem}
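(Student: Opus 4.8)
The plan is to reduce the higher-dimensional statement to the two-dimensional theorem (Theorem~\ref{thm:tictactoe}) wherever possible, and to handle the genuinely new case $d\geq 3$ by exhibiting an explicit pairing strategy for Skew. The first observation is that the case $d=2$ is exactly Theorem~\ref{thm:tictactoe}: the board consists of the rows and columns, and the hypothesis $n_i\neq 1$ matches. So both bullets for $d=2$ are already done, including the $3\times 3$ Player~II claim. Thus the only work is to show that in dimension $d\geq 3$, with every $n_i\geq 2$, Skew has a winning strategy regardless of who moves first.

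For $d\geq 3$ I would look for a \emph{pairing strategy for Skew}, since by the Pairing lemma (Lemma~\ref{pairing lemma}) a pairing strategy yields a winning strategy for either value of $t$, which is precisely what the statement demands. The goal is to find a perfect (or near-perfect) matching on the cells $[n_1]\times\cdots\times[n_d]$ such that any set $A$ selecting exactly one cell from each pair fails to split \emph{some} axis-parallel line. The natural idea: pick a single axis-parallel line $\ell$ (say varying the first coordinate, with all other coordinates fixed) whose length $n_1$ is even, and pair up its cells among themselves; then pair the remaining cells of the grid off in any way (a perfect matching of the complement exists because the total number of cells minus $n_1$ is even when $n_1$ is even — and if $n_1$ is odd one pairs $\ell$ with one cell left over, which requires a small parity fix). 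If $A$ picks exactly one cell from each pair inside $\ell$, then $A$ contains exactly $n_1/2$ cells of $\ell$, so $A$ \emph{splits} $\ell$ — which is the wrong direction. So instead I would pair the cells of $\ell$ with cells \emph{outside} $\ell$: choose a parallel line $\ell'$ adjacent to $\ell$ and pair cell-by-cell, forcing $A$ to contain, for each index, exactly one of the two corresponding cells; this still does not obviously force an imbalance on $\ell$. The cleaner approach, which I expect to work, is to force a \emph{skew on a length-2 line}: since $d\geq 3$, there are many length-$n_i\geq 2$ lines; take a line $\ell$ of even length and pair its cells with the cells of a parallel neighbor $\ell'$ so that Skew's pairing guarantees $|A\cap\ell| + |A\cap \ell'| = n_1$, then use a \emph{second} disjoint line in a different direction to break the tie. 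Concretely, I would try the pairing that matches $(i_1,i_2,i_3,\dots)$ with $(i_1,i_2, \sigma(i_3),\dots)$ where $\sigma$ is a fixed-point-free involution on $[n_3]$ (possible since we may assume, after reordering, that some $n_j$ with $j\geq 3$... ) — and verify that every transversal set skews at least one line in the $e_1$-direction.

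The main obstacle is constructing the pairing so that it simultaneously (i) covers all cells and (ii) provably forces a skew. A single line's worth of pairs only constrains $A$ on that line, and transversal sets on a matched pair of parallel lines can still land with balanced intersection on each. I expect the resolution to be a \emph{two-line} gadget: fix coordinates so that two perpendicular lines $\ell$ (in direction $e_1$) and $m$ (in direction $e_2$) meet in a single cell $c$; one can arrange a small local pairing on $\ell\cup m$ (using that $d\geq 3$ gives room to route the remaining cells of $\ell$ and $m$ into the third dimension and pair them off there) such that any transversal either leaves $\ell$ unbalanced or leaves $m$ unbalanced — an argument structurally identical to the $k=3$, $\mathcal B=\{\{1,2\},\{2,3\}\}$ example in the text, where claiming the shared point $2$ wins. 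Once this local gadget is in place and its complement is shown to admit a perfect matching (a pure parity count on $\prod n_i - |\ell\cup m|$, using $n_i\geq 2$ and $d\geq 3$ to guarantee evenness or to absorb a leftover cell via the Reduction lemma, Lemma~\ref{reduction lemma}), the Pairing lemma finishes the proof. If a clean pairing resists, the fallback is to argue directly: restrict attention to a $2\times 2\times 2$ subcube (available since $d\geq 3$ and all $n_i\geq 2$), show Skew can win the induced sub-game by a hands-on case analysis, and use the Reduction lemma plus a strategy-stealing/ignoring argument to lift it to the full board — but I expect the pairing route to be cleaner and to be what the appendix carries out.
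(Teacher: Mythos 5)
Your reduction of the $d=2$ cases to Theorem~\ref{thm:tictactoe} is fine and matches the paper. The genuine gap is everything you propose for $d\geq 3$: it is a plan rather than a proof, and its central gadget cannot work in the one case that actually requires new ideas, namely $n_1=\cdots=n_d=3$. There all lines have length $3$, and no pairing localized to two crossing lines $\ell=\{c,x_1,x_2\}$ and $m=\{c,y_1,y_2\}$ can force a skew: however the five cells are paired among themselves or matched to outside cells, Split can always choose a transversal meeting each of $\ell$ and $m$ in $1$ or $2$ cells (e.g.\ with pairs $\{x_1,x_2\},\{y_1,y_2\}$ every transversal takes exactly one from each, and with pairs $\{x_1,y_1\},\{x_2,y_2\}$ Split takes $x_1$ and $y_2$; cells paired outside $\ell\cup m$, including $c$, are effectively free for Split). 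More generally, a line of odd length $2j+1$ can never be skewed by constraints internal to that line, since within-line pairs contribute exactly $1$ each and free cells give Split a full interval of achievable intersection sizes containing $\{j,j+1\}$; so any pairing proof would have to exploit global coupling across many lines, and you neither construct such a pairing nor give evidence one exists (the paper explicitly warns that pairing strategies can fail to exist, e.g.\ for the $3\times3$ board itself). Your fallback is also unsound: a $2\times2\times2$ subcube is not a sub-board of the all-$3$'s game, because its length-$2$ segments are not members of $\mathcal B$ (the maximal lines have length $3$ and leave the subcube), and Lemma~\ref{reduction lemma} cannot absorb the discrepancy since in this game board every cell is its own Venn region, so no points can be deleted in pairs.

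For contrast, the paper's proof of the second bullet splits into two cases and uses adaptive (non-pairing) strategies. If some $n_i\neq3$, it takes a full two-dimensional slice spanning two coordinates that are not both $3$ and invokes Theorem~\ref{thm:tictactoe} on that sub-board. If all $n_i=3$, it decomposes the grid into parallel $3\times3$ sub-boards: as Player~I, Skew arranges a one-move lead on one sub-board and builds a double threat of claiming an entire line himself (which Split cannot block, and a line wholly owned by Skew is unsplit); as Player~II, Skew answers inside Split's sub-board using the Player~II winning strategy from the $3\times3$ case, copycatting outside moves, the parity working out because the complement of a sub-board has even size $3^d-9$. Some version of this case analysis (or a genuinely new global pairing construction, which you would have to exhibit explicitly) is what your write-up is missing.
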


\begin{proof}
  The first statement is part of the previous theorem.
  
  For the second statement, unless $n_i=3$ for all $i$, the previous theorem implies there is a two-dimensional sub-board where Skew has a winning strategy.
  
  If $n_i=3$ for all $i$, regard the board as a union of parallel $3\times3$ sub-boards. First suppose Skew is player I. After the first move, we can assume that Player~II moves on a different sub-board (otherwise we could cut the boards in a different way). Then Skew gets to make a second move in the same sub-board, giving them a one move lead. It is not difficult to see that Skew can now make a line in this sub-board. For example Skew's first four moves can form a $2\times2$ corner of the sub-board, forcing Split to play in other corners. After this Skew is threatening two lines and Split cannot block both.
  
  Next suppose that Skew is Player~II. After Split makes the first move, Skew can play on the same sub-board. If Split continues to play on the same sub-board, then Skew wins because Skew is Player~II in a $3\times3$ game. If Split plays outside the sub-board, Skew can copycat and also play outside the sub-board. Since there are an even number of elements in the complement of the sub-board, Skew can continue to use the winning strategy on the sub-board.
\end{proof}

\appendix
\section{$3$-set families with the minimum number of splitters}
\label{appendix:3-set}

Using a computer search, we found all types of splittable $3$-set families on all $k\leq 60$. The code and its output may be found in \cite{hao}.

For each of these $k$ there are several distinct families with the fewest number of splitters. However, the various solutions all look similar to one another (with some Venn regions permuted or single point moved somewhat). Every solution has either exactly two Venn regions with $1$ point or exactly one Venn region with $2$ points. The solution sets repeat in pattern every $6$ values of $k$.

In Figure~\ref{fig:3-set} we give one of the solutions for each equivalence class of $k$ modulo $6$. We conjecture that the pattern continues for all $k$. 

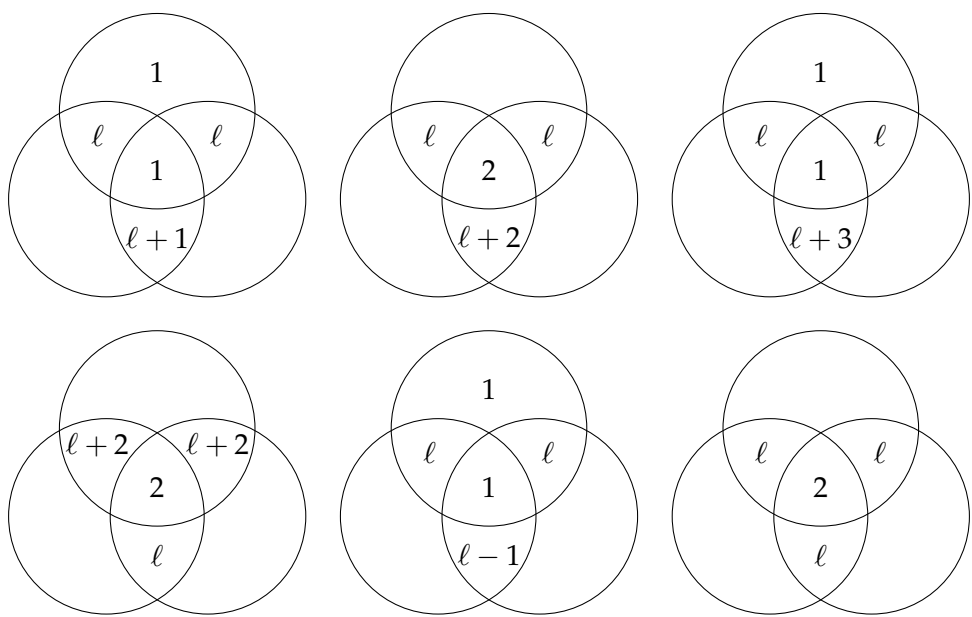
\begin{figure}[H]
  \centering
  \begin{tikzpicture}[inner sep=0,scale=1.3]
    \draw (90:.6) circle (1);
    \draw (210:.6) circle (1);
    \draw (330:.6) circle (1);
    \node at (90:1.9) {};
    \node at (210:1.9) {};
    \node at (330:1.9) {};
    \node at (90:1) (a1) {1};
    \node at (210:1) (a2) {};
    \node at (330:1) (a3) {};
    \node at (-90:.7) (b1) {$\ell+1$};
    \node at (30:.7) (b2) {$\ell$};
    \node at (150:.7) (b3) {$\ell$};
    \node at (0:0) (c) {1};
  \end{tikzpicture}
  \begin{tikzpicture}[inner sep=0,scale=1.3]
    \draw (90:.6) circle (1);
    \draw (210:.6) circle (1);
    \draw (330:.6) circle (1);
    \node at (90:1.9) {};
    \node at (210:1.9) {};
    \node at (330:1.9) {};
    \node at (90:1) (a1) {};
    \node at (210:1) (a2) {};
    \node at (330:1) (a3) {};
    \node at (-90:.7) (b1) {$\ell+2$};
    \node at (30:.7) (b2) {$\ell$};
    \node at (150:.7) (b3) {$\ell$};
    \node at (0:0) (c) {2};
  \end{tikzpicture}
  \begin{tikzpicture}[inner sep=0,scale=1.3]
    \draw (90:.6) circle (1);
    \draw (210:.6) circle (1);
    \draw (330:.6) circle (1);
    \node at (90:1.9) {};
    \node at (210:1.9) {};
    \node at (330:1.9) {};
    \node at (90:1) (a1) {1};
    \node at (210:1) (a2) {};
    \node at (330:1) (a3) {};
    \node at (-90:.7) (b1) {$\ell+3$};
    \node at (30:.7) (b2) {$\ell$};
    \node at (150:.7) (b3) {$\ell$};
    \node at (0:0) (c) {1};
  \end{tikzpicture}
  \begin{tikzpicture}[inner sep=0,scale=1.3]
    \draw (90:.6) circle (1);
    \draw (210:.6) circle (1);
    \draw (330:.6) circle (1);
    \node at (90:1.9) {};
    \node at (210:1.9) {};
    \node at (330:1.9) {};
    \node at (90:1) (a1) {};
    \node at (210:1) (a2) {};
    \node at (330:1) (a3) {};
    \node at (-90:.7) (b1) {$\ell$};
    \node at (35:.75) (b2) {$\ell+2$};
    \node at (145:.75) (b3) {$\ell+2$};
    \node at (0:0) (c) {2};
  \end{tikzpicture}
  \begin{tikzpicture}[inner sep=0,scale=1.3]
    \draw (90:.6) circle (1);
    \draw (210:.6) circle (1);
    \draw (330:.6) circle (1);
    \node at (90:1.9) {};
    \node at (210:1.9) {};
    \node at (330:1.9) {};
    \node at (90:1) (a1) {1};
    \node at (210:1) (a2) {};
    \node at (330:1) (a3) {};
    \node at (-90:.7) (b1) {$\ell-1$};
    \node at (30:.7) (b2) {$\ell$};
    \node at (150:.7) (b3) {$\ell$};
    \node at (0:0) (c) {1};
  \end{tikzpicture}
  \begin{tikzpicture}[inner sep=0,scale=1.3]
    \draw (90:.6) circle (1);
    \draw (210:.6) circle (1);
    \draw (330:.6) circle (1);
    \node at (90:1.9) {};
    \node at (210:1.9) {};
    \node at (330:1.9) {};
    \node at (90:1) (a1) {};
    \node at (210:1) (a2) {};
    \node at (330:1) (a3) {};
    \node at (-90:.7) (b1) {$\ell$};
    \node at (30:.7) (b2) {$\ell$};
    \node at (150:.7) (b3) {$\ell$};
    \node at (0:0) (c) {2};
  \end{tikzpicture}
  \caption{Conjectural $3$-set families with the minimum number of splitters. From left to right, top to bottom, the congruence classes $0,\ldots,5$ of $k$ modulo $6$. (In each case, $\ell$ is an odd integer.)\label{fig:3-set}}
\end{figure}

The pattern here has been verified for $k\leq 60$.

In Figure~\ref{table:3-set}, we give some computational results for the minimum number of splitters of $3$-set families for small values of $k$.
\begin{figure}[H]
\begin{center}
\begin{tabular}{r|c}
	$k$ & $\#$ of splitters in minimum arrangement\\ \hline
	$\leq5$ & 2 \\
	6 & 4 \\
	7 & 6 \\
	8 & 12 \\
	9 & 18 \\
	10 & 36 \\
	11 & 54 \\
	12 & 108 \\
	13 & 180 \\
	14 & 360 \\
	15 & 600 \\
	16 & 1200 \\
	17 & 2000 \\
	18 & 4000 \\
	19 & 7000 \\
	20 & 14000
\end{tabular}
\end{center}
\caption{Minimum number of splitters for $3$-families on $[k]$.\label{table:3-set}}
\end{figure}

\section{Two and three set games}
\label{appendix:two-three}

In this section we catalog the winner and winning strategy for all instances of the splitting game on two or three sets. By Lemma~\ref{reduction lemma}, we may suppose there are $0$ or $1$ elements in each Venn region. We use an empty region to represent $0$ elements and a $\bullet$ to represent $1$ element. Since we will see that each of these games may be won using a (possibly trivial) pairing strategy, the winner is independent of which player goes first. It is also independent of whether or not there are points in the Venn region of multiplicity $0$; we use a $\star$ to suggest either case.

For two set games, there are $8$ arrangements, not considering the multiplicity $0$ region. Of these, $7$ are won by Split and $1$ by Skew. The $8$ arrangements are divided into $5$ types up to symmetry. Of the $5$ types, $4$ are won by Split and $1$ by Skew (see Figure~\ref{fig:2set}). 

\begin{figure}[H]
  \centering
  \begin{tikzpicture}[inner sep=0,scale=.7]
		\draw (0:.6) circle (1);
		\draw (180:.6) circle (1);
		\node at (10:0) (b) {};
		\node at (180:.9) (a1) {};
		\node at (0:.9) (a2) {}; 
		\node at (-90:1.3) (o) {$\star$};     
  \end{tikzpicture}\quad
  \begin{tikzpicture}[inner sep=0,scale=.7]
		\draw (0:.6) circle (1);
		\draw (180:.6) circle (1);
		\node at (10:0) (b) {$\bullet$};
		\node at (180:.9) (a1) {};
		\node at (0:.9) (a2) {};  
		\node at (-90:1.3) (o) {$\star$};    
  \end{tikzpicture}\quad
  \begin{tikzpicture}[inner sep=0,scale=.7]
		\draw (0:.6) circle (1);
		\draw (180:.6) circle (1);
		\node at (10:0) (b) {};
		\node at (180:.9) (a1) {$\bullet$};
		\node at (0:.9) (a2) {};  
		\node at (-90:1.3) (o) {$\star$};    
  \end{tikzpicture}\quad
  \begin{tikzpicture}[inner sep=0,scale=.7]
		\draw (0:.6) circle (1);
		\draw (180:.6) circle (1);
		\node at (10:0) (b) {$\bullet$};
		\node at (180:.9) (a1) {$\bullet$};
		\node at (0:.9) (a2) {};      
		\node at (-90:1.3) (o) {$\star$};
		\draw[<->,blue] (a1)--(b);
  \end{tikzpicture}\qquad    
	\begin{tikzpicture}[inner sep=0,scale=.7]
		\draw (0:.6) circle (1);
		\draw (180:.6) circle (1);
		\node at (10:0) (b) {$\bullet$};
		\node at (180:.9) (a1) {$\bullet$};
		\node at (0:.9) (a2) {$\bullet$};
		\node at (-90:1.3) (o) {$\star$};
		\draw[<->,red] (a1)edge[bend right](a2);
	\end{tikzpicture}
  \caption{Types of two set games, with pairing strategies shown. The first four arrangements are won by Split. The final arrangement is won by Skew.\label{fig:2set}}
\end{figure}
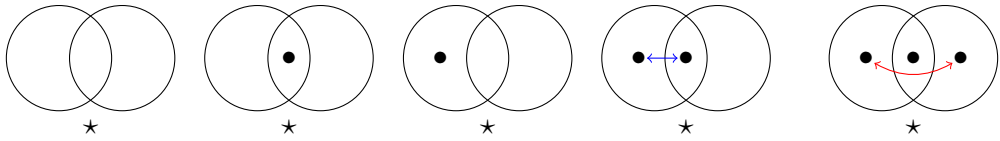

For $3$-set games, there are $128$ arrangements. Of these, $65$ are won by Split and $63$ are won by Skew. The arrangements fall into $40$ types up to symmetry.  By Theorem~\ref{even regions}, Split has a winning strategy for any game with $\leq2$ occurrences of $\bullet$, which accounts for $10$ of the types. The remaining $30$ types are shown below. Figure~\ref{fig:split-winners} depicts games won by Split, and Figure~\ref{fig:skew-winners} depicts games won by Skew.

\begin{figure}[H]
  \centering
  \begin{tikzpicture}[inner sep=0,scale=.7]
    \draw (90:.6) circle (1);
    \draw (210:.6) circle (1);
    \draw (330:.6) circle (1);
    \node at (90:1.9) {};
    \node at (210:1.9) {};
    \node at (330:1.9) {};
    \node at (90:1) (a1) {$\bullet$};
    \node at (210:1) (a2) {$\bullet$};
    \node at (330:1) (a3) {$\bullet$};
    \node at (-90:.7) (b1) {};
    \node at (30:.7) (b2) {};
    \node at (150:.7) (b3) {};
    \node at (0:0) (c) {};
		\node at (-90:1.5) (o) {$\star$};
  \end{tikzpicture}
  \begin{tikzpicture}[inner sep=0,scale=.7]
    \draw (90:.6) circle (1);
    \draw (210:.6) circle (1);
    \draw (330:.6) circle (1);
    \node at (90:1.9) {};
    \node at (210:1.9) {};
    \node at (330:1.9) {};
    \node at (90:1) (a1) {$\bullet$};
    \node at (210:1) (a2) {$\bullet$};
    \node at (330:1) (a3) {};
    \node at (-90:.7) (b1) {};
    \node at (30:.7) (b2) {$\bullet$};
    \node at (150:.7) (b3) {};
    \node at (0:0) (c) {};
		\node at (-90:1.5) (o) {$\star$};
    \draw[<->,blue] (a1)--(b2);
  \end{tikzpicture}
  \begin{tikzpicture}[inner sep=0,scale=.7]
    \draw (90:.6) circle (1);
    \draw (210:.6) circle (1);
    \draw (330:.6) circle (1);
    \node at (90:1.9) {};
    \node at (210:1.9) {};
    \node at (330:1.9) {};
    \node at (90:1) (a1) {$\bullet$};
    \node at (210:1) (a2) {};
    \node at (330:1) (a3) {};
    \node at (-90:.7) (b1) {};
    \node at (30:.7) (b2) {$\bullet$};
    \node at (150:.7) (b3) {};
    \node at (0:0) (c) {$\bullet$};
		\node at (-90:1.5) (o) {$\star$};
    \draw[<->,blue] (c)--(b2);
  \end{tikzpicture}
  \begin{tikzpicture}[inner sep=0,scale=.7]
    \draw (90:.6) circle (1);
    \draw (210:.6) circle (1);
    \draw (330:.6) circle (1);
    \node at (90:1.9) {};
    \node at (210:1.9) {};
    \node at (330:1.9) {};
    \node at (90:1) (a1) {$\bullet$};
    \node at (210:1) (a2) {};
    \node at (330:1) (a3) {};
    \node at (-90:.7) (b1) {};
    \node at (30:.7) (b2) {$\bullet$};
    \node at (150:.7) (b3) {$\bullet$};
    \node at (0:0) (c) {};
		\node at (-90:1.5) (o) {$\star$};
		\draw[<->,blue] (b2)--(b3);
  \end{tikzpicture}
  
  \begin{tikzpicture}[inner sep=0,scale=.7]
    \draw (90:.6) circle (1);
    \draw (210:.6) circle (1);
    \draw (330:.6) circle (1);
    \node at (90:1.9) {};
    \node at (210:1.9) {};
    \node at (330:1.9) {};
    \node at (90:1) (a1) {};
    \node at (210:1) (a2) {};
    \node at (330:1) (a3) {};
    \node at (-90:.7) (b1) {$\bullet$};
    \node at (30:.7) (b2) {$\bullet$};
    \node at (150:.7) (b3) {$\bullet$};
    \node at (0:0) (c) {$\bullet$};
		\node at (-90:1.5) (o) {$\star$};
		\draw[<->,blue] (c)--(b1);
		\draw[<->,blue](b2)--(b3);
  \end{tikzpicture}
  \begin{tikzpicture}[inner sep=0,scale=.7]
    \draw (90:.6) circle (1);
    \draw (210:.6) circle (1);
    \draw (330:.6) circle (1);
    \node at (90:1.9) {};
    \node at (210:1.9) {};
    \node at (330:1.9) {};
    \node at (90:1) (a1) {};
    \node at (210:1) (a2) {};
    \node at (330:1) (a3) {$\bullet$};
    \node at (-90:.7) (b1) {$\bullet$};
    \node at (30:.7) (b2) {};
    \node at (150:.7) (b3) {$\bullet$};
    \node at (0:0) (c) {$\bullet$};
		\node at (-90:1.5) (o) {$\star$};
    \draw[<->,blue](c)--(b3);
		\draw[<->,blue](b1)--(a3);
  \end{tikzpicture}
  \begin{tikzpicture}[inner sep=0,scale=.7]
    \draw (90:.6) circle (1);
    \draw (210:.6) circle (1);
    \draw (330:.6) circle (1);
    \node at (90:1.9) {};
    \node at (210:1.9) {};
    \node at (330:1.9) {};
    \node at (90:1) (a1) {};
    \node at (210:1) (a2) {$\bullet$};
    \node at (330:1) (a3) {$\bullet$};
    \node at (-90:.7) (b1) {$\bullet$};
    \node at (30:.7) (b2) {};
    \node at (150:.7) (b3) {$\bullet$};
    \node at (0:0) (c) {};
		\node at (-90:1.5) (o) {$\star$};
    \draw[<->,blue](a3)--(b1);
		\draw[<->,blue](a2)--(b3);
  \end{tikzpicture}
  \begin{tikzpicture}[inner sep=0,scale=.7]
    \draw (90:.6) circle (1);
    \draw (210:.6) circle (1);
    \draw (330:.6) circle (1);
    \node at (90:1.9) {};
    \node at (210:1.9) {};
    \node at (330:1.9) {};
    \node at (90:1) (a1) {};
    \node at (210:1) (a2) {$\bullet$};
    \node at (330:1) (a3) {$\bullet$};
    \node at (-90:.7) (b1) {$\bullet$};
    \node at (30:.7) (b2) {};
    \node at (150:.7) (b3) {};
    \node at (0:0) (c) {$\bullet$};
		\node at (-90:1.5) (o) {$\star$};
		\draw[<->,blue](c)--(b1);
  \end{tikzpicture}
  
  \begin{tikzpicture}[inner sep=0,scale=.7]
    \draw (90:.6) circle (1);
    \draw (210:.6) circle (1);
    \draw (330:.6) circle (1);
    \node at (90:1.9) {};
    \node at (210:1.9) {};
    \node at (330:1.9) {};
    \node at (90:1) (a1) {};
    \node at (210:1) (a2) {$\bullet$};
    \node at (330:1) (a3) {$\bullet$};
    \node at (-90:.7) (b1) {};
    \node at (30:.7) (b2) {$\bullet$};
    \node at (150:.7) (b3) {$\bullet$};
    \node at (0:0) (c) {$\bullet$};
		\node at (-90:1.5) (o) {$\star$};
    \draw[<->,blue] (a2)--(b3);
    \draw[<->,blue] (c)--(b2);
  \end{tikzpicture}
  \begin{tikzpicture}[inner sep=0,scale=.7]
    \draw (90:.6) circle (1);
    \draw (210:.6) circle (1);
    \draw (330:.6) circle (1);
    \node at (90:1.9) {};
    \node at (210:1.9) {};
    \node at (330:1.9) {};
    \node at (90:1) (a1) {$\bullet$};
    \node at (210:1) (a2) {$\bullet$};
    \node at (330:1) (a3) {$\bullet$};
    \node at (-90:.7) (b1) {$\bullet$};
    \node at (30:.7) (b2) {$\bullet$};
    \node at (150:.7) (b3) {$\bullet$};
    \node at (0:0) (c) {};
		\node at (-90:1.5) (o) {$\star$};
    \draw[<->,blue](a1)--(b3);
    \draw[<->,blue](a2)--(b1);
    \draw[<->,blue](a3)--(b2);
  \end{tikzpicture}
  \caption{Three-set games won by Split with $\geq 3$ instances of $\bullet$, with pairing strategies shown.}
  \label{fig:split-winners}
\end{figure}
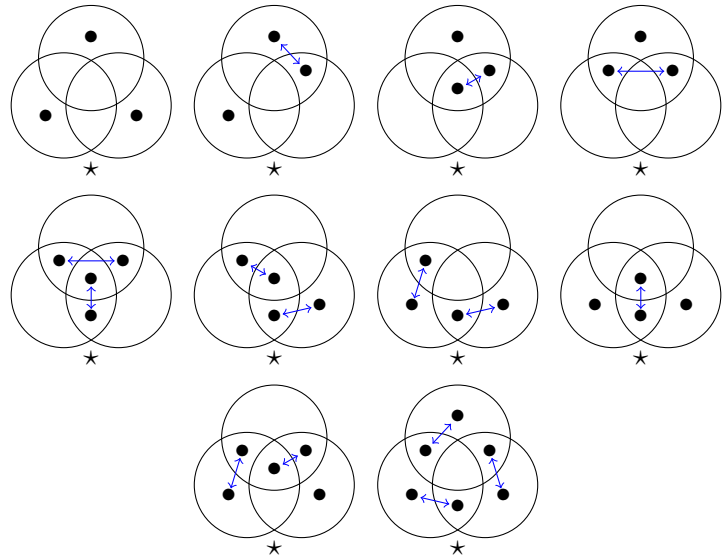
  
\begin{figure}[H]
  \centering
  \begin{tikzpicture}[inner sep=0,scale=.7]
    \draw (90:.6) circle (1);
    \draw (210:.6) circle (1);
    \draw (330:.6) circle (1);
    \node at (90:1.9) {};
    \node at (210:1.9) {};
    \node at (330:1.9) {};
    \node at (90:1) (a1) {};
    \node at (210:1) (a2) {};
    \node at (330:1) (a3) {};
    \node at (-90:.7) (b1) {};
    \node at (30:.7) (b2) {$\bullet$};
    \node at (150:.7) (b3) {$\bullet$};
    \node at (0:0) (c) {$\bullet$};
		\node at (-90:1.5) (o) {$\star$};
		\draw[<->,red](b2)--(b3);
  \end{tikzpicture}
  \begin{tikzpicture}[inner sep=0,scale=.7]
    \draw (90:.6) circle (1);
    \draw (210:.6) circle (1);
    \draw (330:.6) circle (1);
    \node at (90:1.9) {};
    \node at (210:1.9) {};
    \node at (330:1.9) {};
    \node at (90:1) (a1) {};
    \node at (210:1) (a2) {};
    \node at (330:1) (a3) {$\bullet$};
    \node at (-90:.7) (b1) {};
    \node at (30:.7) (b2) {};
    \node at (150:.7) (b3) {$\bullet$};
    \node at (0:0) (c) {$\bullet$};
		\node at (-90:1.5) (o) {$\star$};
		\draw[<->,red](a3)edge[bend left](b3);
  \end{tikzpicture}
  \begin{tikzpicture}[inner sep=0,scale=.7]
    \draw (90:.6) circle (1);
    \draw (210:.6) circle (1);
    \draw (330:.6) circle (1);
    \node at (90:1.9) {};
    \node at (210:1.9) {};
    \node at (330:1.9) {};
    \node at (90:1) (a1) {};
    \node at (210:1) (a2) {$\bullet$};
    \node at (330:1) (a3) {$\bullet$};
    \node at (-90:.7) (b1) {};
    \node at (30:.7) (b2) {};
    \node at (150:.7) (b3) {};
    \node at (0:0) (c) {$\bullet$};
		\node at (-90:1.5) (o) {$\star$};
		\draw[<->,red](a2)--(a3);
  \end{tikzpicture}
  \begin{tikzpicture}[inner sep=0,scale=.7]
    \draw (90:.6) circle (1);
    \draw (210:.6) circle (1);
    \draw (330:.6) circle (1);
    \node at (90:1.9) {};
    \node at (210:1.9) {};
    \node at (330:1.9) {};
    \node at (90:1) (a1) {};
    \node at (210:1) (a2) {};
    \node at (330:1) (a3) {};
    \node at (-90:.7) (b1) {$\bullet$};
    \node at (30:.7) (b2) {$\bullet$};
    \node at (150:.7) (b3) {$\bullet$};
    \node at (0:0) (c) {};
		\node at (-90:1.5) (o) {$\star$};
  \end{tikzpicture}
  \begin{tikzpicture}[inner sep=0,scale=.7]
    \draw (90:.6) circle (1);
    \draw (210:.6) circle (1);
    \draw (330:.6) circle (1);
    \node at (90:1.9) {};
    \node at (210:1.9) {};
    \node at (330:1.9) {};
    \node at (90:1) (a1) {};
    \node at (210:1) (a2) {};
    \node at (330:1) (a3) {$\bullet$};
    \node at (-90:.7) (b1) {$\bullet$};
    \node at (30:.7) (b2) {};
    \node at (150:.7) (b3) {$\bullet$};
    \node at (0:0) (c) {};
		\node at (-90:1.5) (o) {$\star$};
		\draw[<->,red](a3)--(b3);
  \end{tikzpicture}
  \begin{tikzpicture}[inner sep=0,scale=.7]
    \draw (90:.6) circle (1);
    \draw (210:.6) circle (1);
    \draw (330:.6) circle (1);
    \node at (90:1.9) {};
    \node at (210:1.9) {};
    \node at (330:1.9) {};
    \node at (90:1) (a1) {};
    \node at (210:1) (a2) {$\bullet$};
    \node at (330:1) (a3) {$\bullet$};
    \node at (-90:.7) (b1) {$\bullet$};
    \node at (30:.7) (b2) {};
    \node at (150:.7) (b3) {};
    \node at (0:0) (c) {};
		\node at (-90:1.5) (o) {$\star$};
		\draw[<->,red](a2)edge[bend left](a3);
  \end{tikzpicture}
	
  \begin{tikzpicture}[inner sep=0,scale=.7]
    \draw (90:.6) circle (1);
    \draw (210:.6) circle (1);
    \draw (330:.6) circle (1);
    \node at (90:1.9) {};
    \node at (210:1.9) {};
    \node at (330:1.9) {};
    \node at (90:1) (a1) {$\bullet$};
    \node at (210:1) (a2) {};
    \node at (330:1) (a3) {};
    \node at (-90:.7) (b1) {};
    \node at (30:.7) (b2) {$\bullet$};
    \node at (150:.7) (b3) {$\bullet$};
    \node at (0:0) (c) {$\bullet$};
		\node at (-90:1.5) (o) {$\star$};
		\draw[<->,red](b2)--(b3);
  \end{tikzpicture}
  \begin{tikzpicture}[inner sep=0,scale=.7]
    \draw (90:.6) circle (1);
    \draw (210:.6) circle (1);
    \draw (330:.6) circle (1);
    \node at (90:1.9) {};
    \node at (210:1.9) {};
    \node at (330:1.9) {};
    \node at (90:1) (a1) {$\bullet$};
    \node at (210:1) (a2) {};
    \node at (330:1) (a3) {$\bullet$};
    \node at (-90:.7) (b1) {};
    \node at (30:.7) (b2) {};
    \node at (150:.7) (b3) {$\bullet$};
    \node at (0:0) (c) {$\bullet$};
		\node at (-90:1.5) (o) {$\star$};
		\draw[<->,red](a3)edge[bend left](b3);
  \end{tikzpicture}
  \begin{tikzpicture}[inner sep=0,scale=.7]
    \draw (90:.6) circle (1);
    \draw (210:.6) circle (1);
    \draw (330:.6) circle (1);
    \node at (90:1.9) {};
    \node at (210:1.9) {};
    \node at (330:1.9) {};
    \node at (90:1) (a1) {$\bullet$};
    \node at (210:1) (a2) {$\bullet$};
    \node at (330:1) (a3) {$\bullet$};
    \node at (-90:.7) (b1) {};
    \node at (30:.7) (b2) {};
    \node at (150:.7) (b3) {};
    \node at (0:0) (c) {$\bullet$};
		\node at (-90:1.5) (o) {$\star$};
		\draw[<->,red](a2)--(a3);
  \end{tikzpicture}
  \begin{tikzpicture}[inner sep=0,scale=.7]
    \draw (90:.6) circle (1);
    \draw (210:.6) circle (1);
    \draw (330:.6) circle (1);
    \node at (90:1.9) {};
    \node at (210:1.9) {};
    \node at (330:1.9) {};
    \node at (90:1) (a1) {$\bullet$};
    \node at (210:1) (a2) {};
    \node at (330:1) (a3) {};
    \node at (-90:.7) (b1) {$\bullet$};
    \node at (30:.7) (b2) {$\bullet$};
    \node at (150:.7) (b3) {$\bullet$};
    \node at (0:0) (c) {};
		\node at (-90:1.5) (o) {$\star$};
		\draw[<->,red](b2)--(b3);
  \end{tikzpicture}
  \begin{tikzpicture}[inner sep=0,scale=.7]
    \draw (90:.6) circle (1);
    \draw (210:.6) circle (1);
    \draw (330:.6) circle (1);
    \node at (90:1.9) {};
    \node at (210:1.9) {};
    \node at (330:1.9) {};
    \node at (90:1) (a1) {$\bullet$};
    \node at (210:1) (a2) {};
    \node at (330:1) (a3) {$\bullet$};
    \node at (-90:.7) (b1) {$\bullet$};
    \node at (30:.7) (b2) {};
    \node at (150:.7) (b3) {$\bullet$};
    \node at (0:0) (c) {};
		\node at (-90:1.5) (o) {$\star$};
		\draw[<->,red](a3)--(b3);
  \end{tikzpicture}
  \begin{tikzpicture}[inner sep=0,scale=.7]
    \draw (90:.6) circle (1);
    \draw (210:.6) circle (1);
    \draw (330:.6) circle (1);
    \node at (90:1.9) {};
    \node at (210:1.9) {};
    \node at (330:1.9) {};
    \node at (90:1) (a1) {$\bullet$};
    \node at (210:1) (a2) {$\bullet$};
    \node at (330:1) (a3) {$\bullet$};
    \node at (-90:.7) (b1) {$\bullet$};
    \node at (30:.7) (b2) {};
    \node at (150:.7) (b3) {};
    \node at (0:0) (c) {};
		\node at (-90:1.5) (o) {$\star$};
		\draw[<->,red](a2)edge[bend left](a3);
  \end{tikzpicture}
  
  \begin{tikzpicture}[inner sep=0,scale=.7]
    \draw (90:.6) circle (1);
    \draw (210:.6) circle (1);
    \draw (330:.6) circle (1);
    \node at (90:1.9) {};
    \node at (210:1.9) {};
    \node at (330:1.9) {};
    \node at (90:1) (a1) {};
    \node at (210:1) (a2) {};
    \node at (330:1) (a3) {$\bullet$};
    \node at (-90:.7) (b1) {$\bullet$};
    \node at (30:.7) (b2) {$\bullet$};
    \node at (150:.7) (b3) {$\bullet$};
    \node at (0:0) (c) {$\bullet$};
		\node at (-90:1.5) (o) {$\star$};
    \draw[<->,red] (a3)edge[bend right](b3);
    \draw[<->,red] (b1)--(b2);
  \end{tikzpicture}
  \begin{tikzpicture}[inner sep=0,scale=.7]
    \draw (90:.6) circle (1);
    \draw (210:.6) circle (1);
    \draw (330:.6) circle (1);
    \node at (90:1.9) {};
    \node at (210:1.9) {};
    \node at (330:1.9) {};
    \node at (90:1) (a1) {};
    \node at (210:1) (a2) {$\bullet$};
    \node at (330:1) (a3) {$\bullet$};
    \node at (-90:.7) (b1) {$\bullet$};
    \node at (30:.7) (b2) {};
    \node at (150:.7) (b3) {$\bullet$};
    \node at (0:0) (c) {$\bullet$};
		\node at (-90:1.5) (o) {$\star$};
    \draw[<->,red] (a2)--(c);
    \draw[<->,red] (a3)edge[bend right](b3);
  \end{tikzpicture}
  \begin{tikzpicture}[inner sep=0,scale=.7]
    \draw (90:.6) circle (1);
    \draw (210:.6) circle (1);
    \draw (330:.6) circle (1);
    \node at (90:1.9) {};
    \node at (210:1.9) {};
    \node at (330:1.9) {};
    \node at (90:1) (a1) {};
    \node at (210:1) (a2) {$\bullet$};
    \node at (330:1) (a3) {$\bullet$};
    \node at (-90:.7) (b1) {$\bullet$};
    \node at (30:.7) (b2) {$\bullet$};
    \node at (150:.7) (b3) {$\bullet$};
    \node at (0:0) (c) {};
		\node at (-90:1.5) (o) {$\star$};
    \draw[<->,red] (a3)--(b3);
    \draw[<->,red] (a2)--(b2);
  \end{tikzpicture}
  \begin{tikzpicture}[inner sep=0,scale=.7]
    \draw (90:.6) circle (1);
    \draw (210:.6) circle (1);
    \draw (330:.6) circle (1);
    \node at (90:1.9) {};
    \node at (210:1.9) {};
    \node at (330:1.9) {};
    \node at (90:1) (a1) {$\bullet$};
    \node at (210:1) (a2) {$\bullet$};
    \node at (330:1) (a3) {$\bullet$};
    \node at (-90:.7) (b1) {};
    \node at (30:.7) (b2) {};
    \node at (150:.7) (b3) {$\bullet$};
    \node at (0:0) (c) {$\bullet$};
		\node at (-90:1.5) (o) {$\star$};
    \draw[<->,red] (a3)edge[bend right](b3);
    \draw[<->,red] (a2)edge[bend right](a1);
  \end{tikzpicture}
  \begin{tikzpicture}[inner sep=0,scale=.7]
    \draw (90:.6) circle (1);
    \draw (210:.6) circle (1);
    \draw (330:.6) circle (1);
    \node at (90:1.9) {};
    \node at (210:1.9) {};
    \node at (330:1.9) {};
    \node at (90:1) (a1) {$\bullet$};
    \node at (210:1) (a2) {$\bullet$};
    \node at (330:1) (a3) {$\bullet$};
    \node at (-90:.7) (b1) {};
    \node at (30:.7) (b2) {$\bullet$};
    \node at (150:.7) (b3) {$\bullet$};
    \node at (0:0) (c) {};
		\node at (-90:1.5) (o) {$\star$};
    \draw[<->,red] (a2)--(b2);
    \draw[<->,red] (a3)edge[bend left](a1);
  \end{tikzpicture}
  
  \begin{tikzpicture}[inner sep=0,scale=.7]
    \draw (90:.6) circle (1);
    \draw (210:.6) circle (1);
    \draw (330:.6) circle (1);
    \node at (90:1.9) {};
    \node at (210:1.9) {};
    \node at (330:1.9) {};
    \node at (90:1) (a1) {};
    \node at (210:1) (a2) {$\bullet$};
    \node at (330:1) (a3) {$\bullet$};
    \node at (-90:.7) (b1) {$\bullet$};
    \node at (30:.7) (b2) {$\bullet$};
    \node at (150:.7) (b3) {$\bullet$};
    \node at (0:0) (c) {$\bullet$};
		\node at (-90:1.5) (o) {$\star$};
    \draw[<->,red](a2)--(c);
    \draw[<->,red](b1)--(b2);
    \draw[<->,red](a3) edge [bend right](b3);
  \end{tikzpicture}
  \begin{tikzpicture}[inner sep=0,scale=.7]
    \draw (90:.6) circle (1);
    \draw (210:.6) circle (1);
    \draw (330:.6) circle (1);
    \node at (90:1.9) {};
    \node at (210:1.9) {};
    \node at (330:1.9) {};
    \node at (90:1) (a1) {$\bullet$};
    \node at (210:1) (a2) {$\bullet$};
    \node at (330:1) (a3) {$\bullet$};
    \node at (-90:.7) (b1) {};
    \node at (30:.7) (b2) {$\bullet$};
    \node at (150:.7) (b3) {$\bullet$};
    \node at (0:0) (c) {$\bullet$};
		\node at (-90:1.5) (o) {$\star$};
    \draw[<->,red](a1)--(c);
    \draw[<->,red](a2)edge[bend right](b2);
    \draw[<->,red](a3)edge[bend left](b3);
  \end{tikzpicture}
  \qquad
  \begin{tikzpicture}[inner sep=0,scale=.7]
    \draw (90:.6) circle (1);
    \draw (210:.6) circle (1);
    \draw (330:.6) circle (1);
    \node at (90:1.9) {};
    \node at (210:1.9) {};
    \node at (330:1.9) {};
    \node at (90:1) (a1) {$\bullet$};
    \node at (210:1) (a2) {$\bullet$};
    \node at (330:1) (a3) {$\bullet$};
    \node at (-90:.7) (b1) {$\bullet$};
    \node at (30:.7) (b2) {$\bullet$};
    \node at (150:.7) (b3) {$\bullet$};
    \node at (0:0) (c) {$\bullet$};
		\node at (-90:1.5) (o) {$\star$};
    \draw[<->,red](b2)--(b3);
    \draw[<->,red](a1)edge[bend left](b1);
  \end{tikzpicture}
  \caption{Three-set games won by Skew, with pairing strategies shown.}
  \label{fig:skew-winners}
\end{figure}
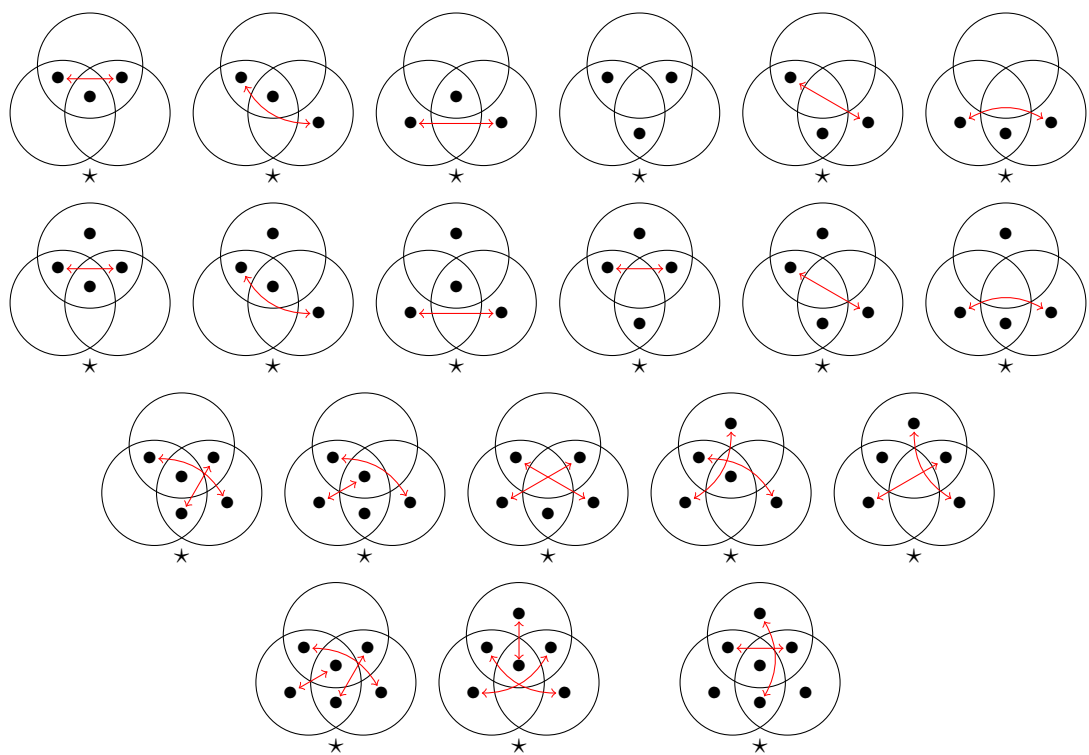

\section{$2$-dimensional splitting tic-tac-toe}
\label{appendix:tictactoe}

In this section we carry out the details of the proof of Theorem~\ref{thm:tictactoe}. Recall the theorem states that in the $m\times n$ splitting tic-tac-toe game with $m,n\neq1$:
\begin{itemize}
  \item If $m=n=3$, Player~II has a winning strategy.
  \item Otherwise, Skew has a winning strategy.
\end{itemize}

For the first item, consider first the case where Skew is Player~I. The board is completely symmetric with respect to the squares, so assume without loss of generality that Skew takes the upper left square. Then Split's strategy is to take the top middle. Up to symmetry, Skew can now take the top right, the left middle, the center, or the right middle.

Suppose Skew takes the top right. Then split takes the left middle. Then if Skew takes the bottom right, then Split takes the right middle. Then as long as Split takes something other than the middle on his remaining turn, Split wins. On the other hand if Skew takes something other than the bottom right on his third turn, then Split takes the bottom right. Now Split has one square from each row and each column, so Skew cannot claim an entire row, and neither can Split, as Split has just one move left. Thus Split wins in this case.

Now suppose Skew takes the left middle. Then Split takes the bottom left. As noted before, if Split has claimed a square from every row and column, Split wins. Thus Skew is forced to take the right middle. Then Split takes the center. Then Split can win by splitting the pair of remaining squares on the right.

Now suppose Skew takes the center. Then Split takes the right middle. Then, so as to prevent Split from taking a square from each row or column, Skew must take the bottom right. Then Split can win by simultaneously splitting the pair of remaining squares on the bottom and the pair of remaining squares on the right.

Finally, consider the case where Skew goes in the right middle. Then Split takes the left middle. Then Skew is forced to take the bottom right, as in the other cases. Then Split takes the top right. Now Split can win by splitting the pair of remaining squares on the bottom.

It remains to show that Skew has a winning strategy in the first item when Split is Player~I. Without loss of generality, Split begins by taking the top left. Then Skew takes the center. Split cannot take a square from the top or the left since Split has the last move, and Skew can force Split to eventually take three in a row. Thus Split has two remaining options, up to symmetry: Split can take the right middle or the bottom right. In either case, after Split makes a move, then Skew takes the bottom middle and forces Split to take the top middle. Then Skew can force Split to eventually take the top right, so Skew wins.

For the second item, if $m,n$ are both even, notice that any row and column forms a two-set family with an odd number of elements in each Venn region. By our previous work on two-set games, Skew has a winning strategy on just these two sets.

If $m=2$, Skew uses a pairing strategy with the pairs $a_{2, j},a_{1, j + 1}$ (cyclically).

For the remaining cases, Skew has a winning strategy by ``getting ahead'' in one of the rows or columns. If at any point, the number of squares claimed by Split and Skew in an even row (column) differ by at least two, or in an odd row (column) by at least three, then Skew can skew the row (column) by claiming squares from that row (column) on each turn until they run out.

If $m$ is even and $n$ is odd, consider first when Skew is Player~I. Skew takes the top left corner. Since $m$ is even, Split cannot allow Skew to take two squares from any column before Split takes any, so Split is forced to go in column 1. Then Skew takes the second square in row 1. Now if Split takes a square from row 1, then Skew takes the second square in column 2, which puts Skew two ahead in column 2. If Split takes a square from column 2, then Skew takes a square from row 1, which puts Skew three ahead in row 1. In either case, Skew wins.

Now consider when $m$ is even and $n$ is odd and Split is Player~I. Without loss of generality, Split goes in the bottom right corner. Then Skew can follow the same strategy as before without Split's first move coming into play at all.

If $m,n \geq 5$ are odd and Skew is Player~I, Skew takes the top left. Assume without loss of generality that Split does not go in column 1 or column 2. Then Skew takes a square from column 1 in an empty row. Then Split is forced to go in column 1. Skew continues taking squares from column 1 in empty rows until Split takes the last square from column 1. Now column 2 is empty, and there are at least two rows in which only Skew has claimed a square. Skew now takes squares from those rows in column 2. Each time, Split is forced to take a square from the same row. Then Skew is free to take a third square from column 2, where Split has taken none, and Skew wins.

If $m,n \geq 5$ are odd and Split is Player~I, assume without loss of generality that Split takes the bottom right square. Then Skew plays as described above, but in the case where Split takes a square not in row 1 in his second move, Skew takes a square from column 1 from the same row, then continues as described. The result is the same, since in either case, it will be Skew's turn with column 2 empty and at least two rows with only Skew having claimed a square.

If $m,n$ are odd and exactly one of them is 3, more explanation is required. Assume $m \geq 5$ and $n = 3$ without loss of generality, and first consider the case where Skew is Player~I. Skew takes the top left. If Split does not go in column 1, then Skew can play as in the $5 \times 5$ game. If Split goes in column 1, then Skew takes the middle square of row 1. Then Split is forced to take the right square of row 1. Then Skew takes squares from column 2 from empty rows until all have been taken, and Split is forced to also take squares from column 2. When Split takes the last square from column 2, there are at least two rows in which only Skew has taken a square. Skew then takes the left square from each of these rows, and Split is forced to take the right square of that row each time. Now Skew takes another square from column 1, which puts Skew three ahead, and Skew wins.

For the last case, assume $m\geq 5$ is odd and $n = 3$ and Split is Player~I. Assume without loss of generality that Split takes the bottom right square. Then Skew takes the top left. If Split doesn't go in go in column 1, then Skew takes squares from column 1, not row $m$, until they run out. Split is forced to take squares from column 1 as well, including the left square in row $m$. Now since there are an odd number of squares, and Split is Player~I, Skew can force Split to eventually take the third square in row $m$, and Skew wins. If Split instead goes in column 1 for his second move, then Skew takes top middle. Split is forced to take top right. Then Skew takes squares from column 2, not row $m$ until they run out, and Split is eventually forced to take bottom middle. Then, since Split has the last move, Skew can force Split to eventually take bottom left, and Skew wins.

This completes the proof.

\bibliographystyle{alpha}
\bibliography{split}

\end{document}